\newcommand{\tpdf}{\texorpdfstring}
\newcommand{\Hamm}{\operatorname{Hamm}}
\newcommand{\colvec}[1]{\begingroup
	\renewcommand*{\arraystretch}{0.8} \begin{pmatrix} #1 \end{pmatrix}
	\endgroup}
\newcommand{\mb}{\mathbf}
\newtheorem{theorem}{Theorem}
\newtheorem{lemma}[theorem]{Lemma}
\newtheorem{proposition}[theorem]{Proposition}
\newtheorem{corollary}[theorem]{Corollary}
\newtheorem*{claim}{Claim}
\newtheorem{conjecture}[theorem]{Conjecture}
\numberwithin{theorem}{section}
\numberwithin{equation}{section}
\theoremstyle{definition}
\newtheorem{observation}[theorem]{Observation}
\newtheorem{definition}[theorem]{Definition}
\newtheorem{notation}[theorem]{Notation}
\newtheorem{remark}[theorem]{Remark}
\title[A set of 2-recurrence]{A set of 2-recurrence whose perfect squares do not form a set of measurable recurrence}
\author{John T. Griesmer}
\email{jtgriesmer@gmail.com}
\address{Department of Applied Mathematics and Statistics, Colorado School of Mines, Golden, Colorado}
\begin{document}
	
\begin{abstract}

We say that $S\subseteq\mathbb Z$ is a set of $k$-recurrence if for every measure preserving transformation $T$ of a probability measure space $(X,\mu)$ and every $A\subseteq X$ with $\mu(A)>0$, there is an $n\in S$ such that $\mu(A\cap T^{-n} A\cap T^{-2n}\cap \dots \cap T^{-kn}A)>0$.  A set of $1$-recurrence is called a set of measurable recurrence.

Answering a question of Frantzikinakis, Lesigne, and Wierdl, we construct a set of $2$-recurrence $S$ with the property that $\{n^2:n\in S\}$ is not a set of measurable recurrence.
\end{abstract}

\maketitle

\section{Background and motivation}

A \emph{probability measure preserving system} (or \emph{MPS}) is a quadruple $(X,\mathcal B,\mu,T)$ where $(X,\mathcal B,\mu)$ is a probability measure space and $T:X\to X$ is an invertible transformation preserving $\mu$, meaning $\mu(T^{-1}A)=\mu(A)$ for every measurable set $A\subseteq X$.

We say that $S\subseteq \mathbb Z$ is a \emph{set of measurable recurrence} if for every MPS $(X,\mathcal B,\mu,T)$ and every $A\subseteq X$ having $\mu(A)>0$, there is an $n\in S$ such that $\mu(A\cap T^{-n}A)>0$.

For a fixed $k\in \mathbb N$, we say $S$ is a \emph{set of $k$-recurrence} if under these hypotheses, there is an $n\in S$ such that $\mu\bigl(\bigcap_{j=0}^{k} T^{-jn}A\bigr)>0$; in this terminology a set of measurable recurrence is a set of $1$-recurrence.

Finally, $S\subseteq \mathbb Z$ is a \emph{set of Bohr recurrence} if for all $d\in \mathbb N$, every  $\bm\alpha \in \mathbb T^d$, and all $\varepsilon>0$, there is an $n\in S$ such that $\|n\bm\alpha\|<\varepsilon$ (see \S\ref{sec:Definitions} for definitions and notation).  

Frantzikinakis, Lesigne, and Wierdl \cite{FLW2006} proved that if $k\in \mathbb N$ and $S\subseteq \mathbb Z$ is a set of $k$-recurrence, then $S^{\wedge k}:=\{n^k: n\in S\}$ is a set of Bohr recurrence.   They ask\footnote{Remarks following Proposition 2.2 of \cite{FLW2006}.} whether this conclusion can be strengthened to ``$S^{\wedge k}$ is a set of measurable recurrence,'' and the subsequent articles \cite{FLW2009} and \cite{FrProblems} reiterate\footnote{Problem 5 of the current version of \cite{FrProblems} at \href{https://arxiv.org/abs/1103.3808}{arXiv:1103.3808}.} this question.  Our main result, Theorem \ref{thm:Main}, provides a negative answer for the case $k=2$.  For $k\geq 3$, the question remains open. A related question in \cite{FLW2009} asks whether a set $S$ which is a set of $k$-recurrence for every $k$ must have the property that $S^{\wedge 2}$ is a set of measurable recurrence. We discuss how our construction relates to these questions in \S\ref{sec:Remarks}.

\begin{theorem}\label{thm:Main}
There is a set $S\subseteq \mathbb Z$ which is a set of $2$-recurrence such that $S^{\wedge 2}$ is not a set of measurable recurrence.
\end{theorem}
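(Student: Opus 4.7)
To witness the failure of measurable recurrence for $S^{\wedge 2}$, my plan is to use a Kronecker rotation as the obstruction: fix an irrational $\alpha \in \mathbb{T}$, let $T : \mathbb{T} \to \mathbb{T}$ be the rotation $x \mapsto x + \alpha$ with Lebesgue measure $\mu$, and let $A$ be a short arc centered at $0$. Then $\mu(A \cap T^{-m}A) > 0$ iff $\|m\alpha\|$ is smaller than the length of $A$, so the second requirement reduces to building $S$ with $\|n^2\alpha\| \geq \eta$ for every $n \in S$, for some fixed $\eta > 0$. Equivalently, $S$ must be disjoint from the ``quadratic Bohr set'' $B := \{n \in \mathbb{Z} : \|n^2 \alpha\| < \eta\}$.

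The delicate part is arranging for such an $S$ to be a set of $2$-recurrence. By Furstenberg's correspondence principle, this amounts to a combinatorial property: every $E \subseteq \mathbb{Z}$ of positive upper Banach density contains a $3$-term arithmetic progression whose common difference lies in $S$. I would construct $S$ as an increasing union $S = \bigcup_k S_k$ via an inductive enumeration of a countable dense family of MPS's (or of positive-density sets in $\mathbb{Z}$), at each stage appending finitely many integers $n \in \mathbb{Z} \setminus B$ that force the desired $3$-AP configurations in the next system on the list.

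The main obstacle is that $B$ has positive density in $\mathbb{Z}$ (roughly $2\eta$, by Weyl equidistribution of $n^2\alpha$), so one must operate entirely inside its complement. Overcoming this should rely on the Host--Kra theory of characteristic factors: $2$-recurrence is governed by the Kronecker and Conze--Lesigne ($2$-step nilpotent) factors, and on such a factor one should be able to show quantitatively that enough $3$-AP structure survives the removal of $B$. I expect the core technical step to be a lemma asserting that, for every $2$-step nil-rotation $(Y,\nu,R)$ and every positive-measure $C \subseteq Y$, the set $\{n \in \mathbb{Z} : \nu(C \cap R^{-n}C \cap R^{-2n}C) > 0\}$ meets $\mathbb{Z} \setminus B$ in a set of positive density. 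With such a lemma in hand, a Behrend-style or probabilistic selection at each inductive stage should let the diagonalization go through, producing $S$ with both the $2$-recurrence property and the avoidance condition $S \cap B = \emptyset$.
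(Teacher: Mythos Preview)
Your proposed obstruction cannot work, and the gap is not technical but structural. You want $S$ to be a set of $2$-recurrence with $S\cap B=\varnothing$, where $B=\{n:\|n^2\alpha\|<\eta\}$. But Frantzikinakis, Lesigne, and Wierdl proved (and the paper cites this as its starting point) that if $S$ is a set of $2$-recurrence then $S^{\wedge 2}$ is a set of \emph{Bohr} recurrence: for every $\alpha\in\mathbb T$ and every $\eta>0$ there exists $n\in S$ with $\|n^2\alpha\|<\eta$. This says precisely that $S$ must meet $B$. So the avoidance condition $S\cap B=\varnothing$ is incompatible with $2$-recurrence, full stop; no amount of Host--Kra machinery or inductive diagonalization can rescue it. Your ``core technical lemma'' is therefore false: there exist $2$-step nilsystems (indeed the skew product on $\mathbb T^2$ built from $\alpha$ itself) for which the set of good $3$-AP differences is contained in $B$ up to density zero near the relevant thresholds.

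The paper's construction is designed exactly to thread this needle. The nonrecurrence witness for $S^{\wedge 2}$ is \emph{not} a short arc in a one-dimensional rotation; it is a Kriz-type set $E\subseteq\mathbb T^r$ of measure close to $\tfrac12$, separated from itself by a translate of an \emph{approximate Hamming ball} $U=\Hamm((\tfrac12,\dots,\tfrac12);k,\varepsilon)$ in a high-dimensional torus. The corresponding Bohr--Hamming ball $BH$ is $\eta$-nonrecurrent for $\eta$ near $\tfrac12$, yet it is \emph{not} a Bohr neighborhood of $0$ (it is centered at $(\tfrac12,\dots,\tfrac12)$ and allows $k$ bad coordinates), so the FLW Bohr-recurrence obstruction does not apply to it. The Hamming slack of $k$ coordinates is then exploited, via characteristic factors and explicit Weyl-system computations, to show that $\sqrt{BH}$ is $(\delta,2)$-recurrent for any $\delta>0$ once $k$ is large. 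These finite approximations are then glued together via Kriz's dilation trick to produce the final $S$. The high-dimensional Hamming structure is not a convenience; it is what separates ``Bohr recurrence'' from ``measurable recurrence'' and makes the construction possible.
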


Reflecting on the known examples of sets of Bohr recurrence which are not sets of measurable recurrence, Frantzikinakis \cite{FrProblems} predicts that an example of a set of $2$-recurrence $S$ where $S^{\wedge 2}$ is not a set of measurable recurrence will be rather complicated.  Our example is indeed complicated: while built from well known constituents  using standard methods, the proof that it is a set of $2$-recurrence uses several reductions - from general measure preserving systems to totally ergodic systems to nilsystems to affine systems to Kronecker systems.  The final reduction combines explicit computations of multiple ergodic averages in 2-step affine systems with classical estimates for three term arithmetic progressions in terms of Fourier coefficients.

\subsection{Outline of the article}  Our approach is similar to Kriz's construction \cite{Kriz} proving that there is a set of topological recurrence which is not a set of measurable recurrence.  Very roughly, our example $S$ in Theorem \ref{thm:Main} is $\{n:n^2 \in R\}$, where $R$ is Kriz's example.  While this description is not quite correct, it may help those familiar with \cite{Kriz}, \cite{GriesmerKrizInDiff}, or \cite{griesmer2020separating} understand our construction.

The overall proof of Theorem \ref{thm:Main} is presented at the end of \S\ref{sec:InductiveStep}.  We outline its components here.  Section \ref{sec:InductiveStep} begins by collecting standard facts about the following finite approximations to recurrence properties.

\begin{definition}\label{def:Finite}
Let $S\subseteq \mathbb Z$ and $k\in \mathbb N$.
We say that $S$ is \emph{$(\delta,k)$-recurrent} if for every MPS $(X,\mathcal B,\mu,T)$  and every $A\subseteq X$ with $\mu(A)>\delta$, we have $A\cap T^{-n}A\cap \dots \cap T^{-kn}A\neq \varnothing$ for some $n\in S$.

We say that $S$ is \emph{$(\delta,k)$-nonrecurrent} if there is an MPS $(X,\mathcal B,\mu,T)$ and $A\subseteq X$ with $\mu(A)>\delta$ such that $A\cap T^{-n}A\cap \dots \cap T^{-kn}A=\varnothing$.

  We say $S$ is \emph{$\delta$-nonrecurrent} if it is $(\delta,1)$-nonrecurrent, meaning there is an MPS $(X,\mathcal B,\mu,T)$ and $A\subseteq X$ with $\mu(A)>\delta$ such that $A\cap T^{-n}A=\varnothing$ for all $n\in S$.
\end{definition}
\begin{remark}
  The condition $A\cap T^{-n}A\cap \dots \cap T^{-kn}A\neq \varnothing$ in the definition of $(\delta,k)$-recurrent may be replaced with $\mu(A\cap T^{-n}A\cap \dots \cap T^{-kn}A)>0$; cf.~Lemma \ref{lem:RecurrenceEquivalence}.
\end{remark}

Lemma \ref{lem:Combine} says that if $S_1, S_2\subseteq \mathbb Z$ are finite, $\delta_1$-nonrecurrent, and $\delta_2$-nonrecurrent, respectively, then for all sufficiently large $m$, $S_1\cup mS_2$ is  $2\delta_1\delta_2$-nonrecurrent.  Thus, if $S_1^{\wedge 2}$ and $S_2^{\wedge 2}$ are $\delta_1$-nonrecurrent and $\delta_2$-nonrecurrent, respectively, then $(S_1 \cup mS_2)^{\wedge 2}$ is $2\delta_1\delta_2$-nonrecurrent for all sufficiently large $m$, as $(S_1 \cup mS_2)^{\wedge 2} = S_1^{\wedge 2} \cup m^2S_2^{\wedge 2}$.

Lemma \ref{lem:RecurrenceCompactness} says that  $S\subseteq \mathbb Z$ is $\delta$-nonrecurrent if and only if for all $\delta'< \delta$ and all \emph{finite} subsets $S'\subseteq S$, $S'$ is $\delta'$-nonrecurrent.  Likewise, if $S\subseteq \mathbb Z$ is $(\eta,2)$-recurrent, then for all $\eta'<\eta$, there is a finite subset $S'\subseteq S$ which is $(\eta',2)$-recurrent.

The proof of Theorem \ref{thm:Main} is given at the end of \S\ref{sec:InductiveStep}; it explains in detail how finite approximations are assembled to form a $2$-recurrent set whose perfect squares do not form a set of measurable recurrence.  This reduces the problem to proving Lemma \ref{lem:FinitePiece}, which states that the required finite approximations exist. These approximations are based on \emph{Bohr-Hamming balls}, which we introduce in Section \ref{sec:Definitions}. Bohr-Hamming balls were used in \cite{Kriz} and \cite{griesmer2020separating} to construct sets with prescribed recurrence properties.  Fixing  $\delta<\frac{1}{2}$ and $\eta>0$,  Lemmas \ref{lem:BHNonrecurrent} and \ref{lem:SqrtBHisrecurrent} show that there is a Bohr-Hamming ball $BH$ which is $\delta$-nonrecurrent, while $\sqrt{BH}:=\{n\in \mathbb N: n^2\in BH\}$ is $(\eta,2)$-recurrent.

The proof of Lemma \ref{lem:SqrtBHisrecurrent} occupies \S\ref{sec:MultipleErgodic}-\S\ref{sec:Appendix}.  It is proved by estimating multiple ergodic averages of the form
\begin{equation}\label{eqn:AverageDef}
\lim_{N\to\infty} \frac{1}{N}\sum_{n=1}^N g(n^2\bm\beta) \int f\cdot f\circ T^n \cdot f\circ T^{2n}\, d\mu,
\end{equation}
where $(X,\mathcal B,\mu,T)$ is a measure preserving system, $f:X\to [0,1]$ has $\int f\, d\mu>\delta$ for some prescribed $\delta>0$,  $\bm\beta\in \mathbb T^r$ for some $r\in \mathbb N$, and $g:\mathbb T^r\to [0,1]$ is Riemann integrable. Under certain hypotheses on $g$, we will prove the limit in (\ref{eqn:AverageDef}) is positive; this is inequality (\ref{eqn:PositiveIntegral}) in the proof of Lemma \ref{lem:SqrtBHisrecurrent}.  In \S\ref{sec:MultipleErgodic} we show how the general case may be reduced to the one where $T$ is totally ergodic. The remainder of the article, outlined in \S\ref{sec:OutlineSpecial}, is dedicated to analyzing the limit in (\ref{eqn:AverageDef}) when $T$ is totally ergodic.  Section \ref{sec:AffineReduction} shows that the totally ergodic case can be further reduced to the study of \emph{standard $2$-step Weyl systems}, and \S\S\ref{sec:Joinings}-\ref{sec:AffineLimits} are dedicated to simplifying and estimating (\ref{eqn:AverageDef}) for these systems.

 Readers familiar with the theory of characteristic factors (especially \cite{FrantzikinakisThreePoly}) may find it most profitable to read \S\ref{sec:InductiveStep}, \S\ref{sec:Definitions}, \S\ref{sec:Outline}, and \S\ref{sec:AffineReduction} in detail, and skim \S\ref{sec:MultipleErgodic}.

\subsection{Acknowledgement}  We thank Nikos Frantzikinakis for helpful comments.  An anonymous referee contributed several corrections and improvements to exposition.

\section{Constructing the example from finite approximations}\label{sec:InductiveStep}

We first require some standard facts about the properties mentioned in Definition \ref{def:Finite}. The following is Lemma 3.6 of \cite{GriesmerKrizInDiff}; it is essentially Lemma 3.2 of \cite{Kriz}. Similar lemmas appear, often unnamed, in the variations on Kriz's example  (\cite{ForrestThesis}, \cite{McCutcheonAlexandria}, \cite{McCutcheonBook}, \cite{WeissBook}).

\begin{lemma}\label{lem:Combine}
	Let $S_1, S_2\subseteq \mathbb N$ be finite. If $S_1$ and $S_2$ are $\delta$-nonrecurrent and $\eta$-nonrecurrent, respectively, then for all sufficiently large $m\in \mathbb N$, $S_1\cup mS_2$ is $2\delta\eta$-nonrecurrent.
\end{lemma}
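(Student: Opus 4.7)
The plan is a product-with-$m$-th-root construction. Fix witnesses: MPSes $(X_i, \mathcal{B}_i, \mu_i, T_i)$ and $A_i \subseteq X_i$ with $\mu_1(A_1) > \delta$, $\mu_2(A_2) > \eta$, and $A_i \cap T_i^{-n}A_i = \varnothing$ for every $n \in S_i$ ($i = 1, 2$). To convert the $S_2$-nonrecurrence of $T_2$ into $mS_2$-nonrecurrence of a related transformation, I will build an $m$-th root of $T_2$ on a Rokhlin tower.

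On $\tilde{X}_2 := X_2 \times \mathbb{Z}/m\mathbb{Z}$ with measure $\tilde{\mu}_2 := \mu_2 \otimes \mathrm{Unif}$, define $\tilde{T}_2(x, j) := (x, j+1)$ for $j < m-1$ and $\tilde{T}_2(x, m-1) := (T_2 x, 0)$. Then $\tilde{T}_2^{km} = T_2^k \times \mathrm{id}$, and $\tilde{A}_2 := A_2 \times \mathbb{Z}/m\mathbb{Z}$, of measure $\mu_2(A_2) > \eta$, satisfies $\tilde{A}_2 \cap \tilde{T}_2^{-km}\tilde{A}_2 = (A_2 \cap T_2^{-k}A_2) \times \mathbb{Z}/m\mathbb{Z} = \varnothing$ for every $k \in S_2$. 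In the product MPS $(X_1 \times \tilde{X}_2, T_1 \times \tilde{T}_2)$, the rectangle $A := A_1 \times \tilde{A}_2$ is nonrecurrent for $S_1 \cup mS_2$: for $n \in S_1$ the first marginal $A_1 \cap T_1^{-n}A_1$ of $A \cap (T_1 \times \tilde{T}_2)^{-n}A$ is empty, and for $n \in mS_2$ the second marginal is empty. This rectangle has measure $\mu_1(A_1)\mu_2(A_2) > \delta\eta$.

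The main technical hurdle is sharpening the measure bound from $\delta\eta$ to the claimed $2\delta\eta$. Any $\delta$-nonrecurrent witness $A$ has $2\mu(A) \leq 1$, forcing $\delta, \eta < 1/2$, so $\min(\delta, \eta) > 2\delta\eta$ and the target lies below the trivial upper bound $1/2$ on any nonrecurrent set's measure. Achieving the factor of two is the main content of the proof, and I expect it to use the ``sufficiently large $m$'' hypothesis to enlarge the witnessing set beyond the plain rectangle $A$ (e.g., by unioning $A$ with additional rectangles supported on other fibers of the tower, or by replacing the product measure on $X_1 \times \tilde{X}_2$ with a nontrivial $T_1 \times \tilde{T}_2$-invariant joining concentrating mass on $A_1 \times \tilde{A}_2$), while verifying that the enlarged witness remains nonrecurrent for $S_1 \cup mS_2$ by checking the relevant cross-term emptiness relations in the product.
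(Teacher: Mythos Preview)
Your proposal is incomplete: you correctly build a witness of measure exceeding $\delta\eta$, and you correctly identify that the factor $2$ is the entire point of the lemma, but you do not supply the idea that produces it. The paper itself does not prove this lemma, citing instead \cite{Kriz} and \cite{GriesmerKrizInDiff}; the argument there is \emph{not} a joining argument, and your speculation about replacing $\mu_1\times\tilde{\mu}_2$ by a nontrivial invariant joining will not work (any such joining still has both marginals correct, so the rectangle $A_1\times\tilde A_2$ still has measure $\mu_1(A_1)\mu_2(A_2)$).

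The missing idea is a ``crossed disjoint translates'' trick. Fix $s_1\in S_1$ and $s_0\in S_2$. Nonrecurrence gives two disjoint pairs: $A_1,\ T_1^{-s_1}A_1$ in $X_1$ (each of measure $>\delta$) and $A_2,\ T_2^{-s_0}A_2$ in $X_2$ (each of measure $>\eta$). In your tower system $(X_1\times \tilde X_2,\ T_1\times\tilde T_2)$, take
\[
D \;=\; \bigl(A_1\times A_2\times\{0,\dots,m-1\}\bigr)\ \cup\ \bigl(T_1^{-s_1}A_1\times T_2^{-s_0}A_2\times\{0,\dots,m-1\}\bigr).
\]
The two rectangles are disjoint (their $X_2$-components are), so $\mu(D)=2\mu_1(A_1)\mu_2(A_2)>2\delta\eta$. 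For $n\in mS_2$ the four intersections $D\cap(T_1\times\tilde T_2)^{-n}D$ reduce, in the $X_2$-coordinate, to $A_2\cap T_2^{-s}A_2$, $T_2^{-s_0}A_2\cap T_2^{-s-s_0}A_2$, and the two cross terms $A_2\cap T_2^{-s-s_0}A_2$, $T_2^{-s_0}A_2\cap T_2^{-s}A_2$; the first two vanish by $S_2$-nonrecurrence, while the cross terms are controlled by the \emph{$X_1$-coordinate}, since there they read $A_1\cap T_1^{-n-s_1}A_1$ and $T_1^{-s_1}A_1\cap T_1^{-n}A_1$ --- hold on: these need not vanish. The clean way to make this work is to pass to the combinatorial picture via Lemma~\ref{lem:RecurrenceEquivalence}: take $B^{(1)}\subseteq[0,m-2\max S_1)$ witnessing $\delta$-nonrecurrence of $S_1$ and $B^{(2)}\subseteq[0,N)$ witnessing $\eta$-nonrecurrence of $S_2$, and set
\[
D \;=\; \bigl\{qm+b:\ q\in B^{(2)},\ b\in B^{(1)}\bigr\}\ \cup\ \bigl\{qm+b:\ q\in B^{(2)}+s_0,\ b\in B^{(1)}+s_1\bigr\}\ \subseteq\ [0,Nm).
\]
Now the cross-term analysis succeeds: for a difference in $S_1$ the buffer forces equal $q$-coordinates, and then $q\in B^{(2)}\cap(B^{(2)}+s_0)=\varnothing$; for a difference in $mS_2$ one is forced to equal $b$-coordinates, and then $b\in B^{(1)}\cap(B^{(1)}+s_1)=\varnothing$. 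The density of $D$ is $2|B^{(1)}|\,|B^{(2)}|/(Nm)$, which exceeds $2\delta\eta$ once $m$ is large enough to absorb the buffer loss. This is where ``sufficiently large $m$'' is genuinely used; your tower construction worked for every $m$, which is already a sign that it cannot reach $2\delta\eta$.

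Your instinct that one should union in ``additional rectangles'' was correct; the point you were missing is that the second rectangle must be built from translates of \emph{both} witnesses simultaneously, with each coordinate's disjointness compensating for the other's failure in the cross terms.
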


\begin{lemma}\label{lem:DilatesPreserveRecurrence}
  Let $m\in \mathbb Z$ and $\delta\geq 0$.  If $S\subseteq \mathbb Z$ is $(\delta,2)$-recurrent, then $mS$ is also $(\delta,2)$-recurrent.
\end{lemma}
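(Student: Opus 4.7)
The proof is essentially immediate from the observation that $(\delta,2)$-recurrence of $mS$ in a system $(X,\mathcal B,\mu,T)$ is equivalent to $(\delta,2)$-recurrence of $S$ in the dilated system $(X,\mathcal B,\mu,T^m)$. The plan is therefore to reduce to the hypothesis by passing to $T^m$.

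Concretely, given an MPS $(X,\mathcal B,\mu,T)$ and $A\subseteq X$ with $\mu(A)>\delta$, I would first note that $T^m$ is invertible (since $T$ is) and preserves $\mu$, so $(X,\mathcal B,\mu,T^m)$ is itself an MPS. Assuming $m\neq 0$, apply the $(\delta,2)$-recurrence of $S$ to this new system to produce $n\in S$ with
$$A\cap (T^m)^{-n}A\cap (T^m)^{-2n}A\neq\varnothing.$$
Rewriting $(T^m)^{-jn}=T^{-(mn)j}$ for $j=1,2$ and noting $mn\in mS$ furnishes the required witness in $mS$.

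The degenerate case $m=0$ is handled separately: $(\delta,2)$-recurrence of $S$ forces $S$ to be nonempty, so $mS=\{0\}$, and then $A\cap T^0A\cap T^0A=A\neq\varnothing$ since $\mu(A)>\delta\geq 0$. There is no real obstacle in the proof; the only point worth flagging is that invertibility of $T$ (part of the MPS definition) is what permits the dilate $T^m$ to make sense as a measure preserving transformation for arbitrary $m\in\mathbb Z$, including negative values.
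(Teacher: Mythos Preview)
Your proof is correct and follows the same approach as the paper: pass to the dilated system $(X,\mathcal B,\mu,T^m)$ and apply the hypothesis. The paper does not separate out the case $m=0$ (the argument via $T^0=\mathrm{id}$ works uniformly), and it phrases the conclusion using the equivalent $\mu(\cdot)>0$ form, but these are cosmetic differences.
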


\begin{proof}
  Fix $m\in \mathbb Z$ and let $S\subseteq \mathbb Z$ be a $(\delta,2)$-recurrent set. Let $(X,\mathcal B,\mu,T)$ be an MPS, with $A\subseteq X$ having $\mu(A)>\delta$.  Consider the MPS $(X,\mathcal B,\mu,T^m)$.  Since $\mu(A)>\delta$, there exists $n\in S$ such that $\mu(A\cap (T^{m})^{-n}A \cap (T^{m})^{-2n}A)>0$, meaning $\mu(A\cap T^{-mn}A\cap T^{-2(mn)}A)>0$.  Since $mn\in mS$, this proves $mS$ is $(\delta,2)$-recurrent.
\end{proof}

Our proof of Lemma \ref{lem:FinitePiece} uses the following compactness properties for recurrence.

\begin{lemma}\label{lem:RecurrenceCompactness}
  Let $k\in \mathbb N$ and $\delta\geq 0$.  If $\delta'>\delta$ and every finite subset of $S$ is $(\delta',k)$-nonrecurrent, then $S$ is $(\delta,k)$-nonrecurrent.

  Consequently, if $S$ is $(\delta,k)$-recurrent, then for all $\delta'>\delta$, there is a finite $S'\subseteq S$ which is $(\delta',k)$-recurrent.
\end{lemma}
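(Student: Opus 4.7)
The plan is to produce, as a weak-$*$ limit of symbolic factors of the MPSs witnessing $(\delta',k)$-nonrecurrence on larger and larger finite subsets of $S$, a single MPS witnessing $(\delta,k)$-nonrecurrence of $S$. Enumerate $S=\{s_1,s_2,\dots\}$ and set $S_N:=\{s_1,\dots,s_N\}$. By hypothesis, for each $N$ there is an MPS $(X_N,\mathcal B_N,\mu_N,T_N)$ and a set $A_N\subseteq X_N$ with $\mu_N(A_N)>\delta'$ and $A_N\cap T_N^{-n}A_N\cap\dots\cap T_N^{-kn}A_N=\varnothing$ for every $n\in S_N$.

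First, encode each system in the compact metric space $\Omega:=\{0,1\}^{\mathbb Z}$ equipped with the shift $\sigma$, via the factor map $\Phi_N(x)(i):=\mathbf 1_{A_N}(T_N^i x)$, and let $\nu_N:=(\Phi_N)_*\mu_N$; each $\nu_N$ is a shift-invariant Borel probability measure. Letting $C:=\{\omega\in\Omega:\omega(0)=1\}$, a clopen cylinder, one has $\nu_N(C)=\mu_N(A_N)>\delta'$, and the emptiness condition on $A_N$ transfers to $\nu_N(C\cap\sigma^{-n}C\cap\dots\cap\sigma^{-kn}C)=0$ for every $n\in S_N$, since any $\omega$ in that intersection pulls back (on the support of $\nu_N$) to a point of $A_N\cap T_N^{-n}A_N\cap\dots\cap T_N^{-kn}A_N$. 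By weak-$*$ compactness of probability measures on $\Omega$, pass to a subsequence $\nu_{N_j}\to\nu$; the limit is shift-invariant. Since $C$ and $C\cap\sigma^{-n}C\cap\dots\cap\sigma^{-kn}C$ are clopen, continuity of their indicators yields $\nu(C)\geq\delta'>\delta$ and, for each fixed $n\in S$ (which lies in $S_{N_j}$ for all large $j$), $\nu(C\cap\sigma^{-n}C\cap\dots\cap\sigma^{-kn}C)=0$.

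To match the literal $\varnothing$-condition in Definition \ref{def:Finite}, I would delete the countable family of $\nu$-null sets $\bigcup_{n\in S}\bigcup_{j=0}^{k}\sigma^{jn}\bigl(C\cap\sigma^{-n}C\cap\dots\cap\sigma^{-kn}C\bigr)$ from $C$, producing $C'$ with $\nu(C')=\nu(C)>\delta$ and $C'\cap\sigma^{-n}C'\cap\dots\cap\sigma^{-kn}C'=\varnothing$ for every $n\in S$; alternatively, one can invoke Lemma \ref{lem:RecurrenceEquivalence} to pass between the $\varnothing$- and the measure-zero formulation. This gives the first assertion. The second assertion follows by contraposition: if $S$ is $(\delta,k)$-recurrent and some $\delta'>\delta$ admitted no finite $(\delta',k)$-recurrent $S'\subseteq S$, then every finite $S'\subseteq S$ would be $(\delta',k)$-nonrecurrent, and the first part would force $S$ itself to be $(\delta,k)$-nonrecurrent, a contradiction. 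The only real subtlety in the argument is the empty-versus-measure-zero mismatch just mentioned; everything else is a routine diagonalization in the weak-$*$ topology on a Cantor space.
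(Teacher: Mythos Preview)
Your proof is correct and follows essentially the same strategy as the paper: encode into the shift space $\{0,1\}^{\mathbb Z}$, exploit weak-$*$ compactness of Borel probabilities, and use clopenness of cylinder sets to pass the relevant conditions to the limit. The paper's route is slightly more indirect: it first establishes the equivalence in Lemma~\ref{lem:RecurrenceEquivalence} (in particular the finitary condition (iii) involving subsets $B_N\subseteq[N]$), and then deduces Lemma~\ref{lem:RecurrenceCompactness} by producing such $B_N$ from the finite truncations $S\cap[-N,N]$ and invoking (iii)$\Rightarrow$(i); the weak-$*$ limit is taken over empirical measures along orbits of the points $1_{B_N}$. You instead push forward the witnessing measures $\mu_N$ directly via the coding map and take the limit of those, which avoids the finitary detour and is arguably cleaner, though the underlying mechanism is the same.
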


We prove Lemma \ref{lem:RecurrenceCompactness} in \S\ref{sec:Appendix}. A special case, which is easily adapted to prove the general case, appears in Chapter 2 of \cite{ForrestThesis}.

Theorem \ref{thm:Main} is proved by combining the following lemma with the others in this section.
\begin{lemma}\label{lem:FinitePiece}
	For all $\delta>0$ and $\eta<1/2$, there exists $S\subseteq \mathbb Z$ which is $(\delta,2)$-recurrent such that $S^{\wedge 2}$ is $\eta$-nonrecurrent.
\end{lemma}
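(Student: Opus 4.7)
The plan is to exhibit a single set $S = \sqrt{BH} := \{n \in \mathbb{N} : n^2 \in BH\}$, where $BH$ is an appropriately chosen Bohr-Hamming ball (as introduced in \S\ref{sec:Definitions}). The parameters of $BH$ should be tuned so that simultaneously $BH$ itself is $\eta$-nonrecurrent and its square-root preimage $\sqrt{BH}$ is $(\delta, 2)$-recurrent; the introduction advertises these two facts as Lemmas \ref{lem:BHNonrecurrent} and \ref{lem:SqrtBHisrecurrent}. Granted both, Lemma \ref{lem:FinitePiece} is a one-line assembly.

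Indeed, setting $S := \sqrt{BH}$ one has $S^{\wedge 2} = \{n^2 : n \in \sqrt{BH}\} = \{n^2 : n^2 \in BH\} \subseteq BH$, and nonrecurrence is clearly preserved under passage to subsets: the same MPS and set $A$ that witness $\eta$-nonrecurrence of $BH$ witness it for any subset. Hence $S^{\wedge 2}$ is $\eta$-nonrecurrent, while $S$ itself is $(\delta, 2)$-recurrent by choice of $BH$, which is exactly what the lemma demands. Note the hypothesis $\eta<1/2$ is consumed by Lemma \ref{lem:BHNonrecurrent}, while the hypothesis $\delta>0$ is consumed by Lemma \ref{lem:SqrtBHisrecurrent}; neither enters the combination step.

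The substantive content therefore reduces to the two Bohr-Hamming-ball lemmas. For the $\eta$-nonrecurrence of $BH$ I would follow the well-trodden Kriz-style path: realize $BH$ as the set of return times of a rotation on a product group to a Hamming-ball-shaped neighborhood of the identity, and then exhibit a set $A$ of measure close to $1/2$ whose translates by these return times are forced into its complement by the triangle inequality for the Hamming metric; this is where the bound $\eta<1/2$ is tight. The main obstacle is the other direction, proving $\sqrt{BH}$ is $(\delta,2)$-recurrent for arbitrary $\delta>0$. For this I would execute the strategy outlined in \S\S\ref{sec:MultipleErgodic}--\ref{sec:AffineLimits}: estimate the multiple ergodic averages (\ref{eqn:AverageDef}), reducing first to totally ergodic $T$, then to standard $2$-step Weyl systems where $\int f\cdot f\circ T^n\cdot f\circ T^{2n}\,d\mu$ has an explicit polynomial-in-$n$ expression, and finally combine the joint equidistribution of $n^2\bm{\beta}$ with $T^n, T^{2n}$-orbits together with classical Fourier lower bounds for three-term arithmetic progressions in dense sets to obtain strict positivity of (\ref{eqn:AverageDef}) whenever $\int f\,d\mu>\delta$. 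That positivity, via the remark following Definition \ref{def:Finite}, yields the required $(\delta,2)$-recurrence of $\sqrt{BH}$, and it is there that the entire technical apparatus of the paper resides.
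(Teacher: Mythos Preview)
Your proposal is correct and matches the paper's own proof essentially line for line: set $S=\sqrt{BH}$ for a proper Bohr--Hamming ball $BH$ with parameters chosen so that Lemma~\ref{lem:SqrtBHisrecurrent} yields $(\delta,2)$-recurrence of $S$ and Lemma~\ref{lem:BHNonrecurrent} yields $\eta$-nonrecurrence of $BH\supseteq S^{\wedge 2}$. The only detail worth making explicit is the order of parameter selection---first fix $k\geq k_0(\delta)$ from Lemma~\ref{lem:SqrtBHisrecurrent}, then take $r>k$ and $\varepsilon>0$ from Lemma~\ref{lem:BHNonrecurrent}, and finally pick $\bm\beta$ generating so that $BH$ is proper---but you have clearly identified the assembly and where each hypothesis is consumed.
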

	By Lemma \ref{lem:RecurrenceCompactness} we can take $S$ to be finite in Lemma \ref{lem:FinitePiece}.

Lemmas \ref{lem:BHNonrecurrent} and \ref{lem:SqrtBHisrecurrent} will prove Lemma \ref{lem:FinitePiece}; the proof of  Lemma \ref{lem:SqrtBHisrecurrent} forms the majority of this article.

\begin{proof}[Proof of Theorem \ref{thm:Main}]

Let $\delta<\delta'<\frac{1}{2}$.  We will construct an increasing sequence of finite sets $S_1\subseteq S_2\subseteq \dots$ so that $S_n$ is $(\frac{1}{n},2)$-recurrent, and $S_n^{\wedge 2}$ is $\delta'$-nonrecurrent. Setting $S:=\bigcup_{n=1}^\infty S_n$, we get that $S$ is a set of $2$-recurrence, while every finite subset of $S^{\wedge 2}$ is $\delta'$-nonrecurrent.  Lemma \ref{lem:RecurrenceCompactness} then implies $S$ is $\delta$-nonrecurrent.

To define $S_1$, we apply Lemma \ref{lem:FinitePiece} to find an $S_1\subseteq \mathbb Z$ which is $(1,2)$-recurrent, while $S_1^{\wedge 2}$ is $\delta_1$-nonrecurrent for some $\delta_1>\delta'$.  We define the remaining $S_n$ inductively:  suppose $n\in \mathbb N$ and that $S_n$ has been chosen to be $(\frac{1}{n},2)$-recurrent, while $S_n^{\wedge 2}$ is $\delta_n$-nonrecurrent for some $\delta_n>\delta'$. Let $\eta<\frac{1}{2}$ so that $2\eta\delta_n>\delta'$. We will find $S_{n+1}\supset S_n$ so that $S_{n+1}$ is $(\frac{1}{n+1},2)$-recurrent and $S_{n+1}^{\wedge 2}$ is $2\eta\delta_n$-nonrecurrent.  To do so, apply Lemma \ref{lem:FinitePiece} to find a finite $R \subseteq \mathbb Z$ which is $(\frac{1}{n+1},2)$-recurrent such that $R^{\wedge 2}$ is $\eta$-nonrecurrent.  By Lemma \ref{lem:Combine}, choose $m\in \mathbb N$ so that $(S_n^{\wedge 2})\cup m^2(R^{\wedge 2})$ is $2\eta\delta_n$-nonrecurrent.  Now $S_{n+1}:= S_n \cup mR$ is the desired set: $mR$ is $(\frac{1}{n+1},2)$-recurent, by Lemma \ref{lem:DilatesPreserveRecurrence}, while $S_{n+1}^{\wedge 2}= (S_n^{\wedge 2})\cup m^2(R^{\wedge 2})$.  Since $2\eta\delta_n > \delta'$, this completes the inductive step of the construction.
\end{proof}

\section{Approximate Hamming balls in \tpdf{$\mathbb T^r$}{Tr} and Bohr-Hamming balls in \tpdf{$\mathbb Z$}{Z} }\label{sec:Definitions}
Let $\mathbb T$ denote the group $\mathbb R/\mathbb Z$ with the usual topology.
For $x\in \mathbb T$, let $\tilde{x}$ denote the unique element of $[0,1)$ such that $x = \tilde{x}+\mathbb Z,$ and define $\|x\|:=\min\{|\tilde{x}-n|: n\in \mathbb Z\}$.  For $r\in \mathbb N$ and $\mb x = (x_1,\dots, x_r)\in \mathbb T^r$, let $\|\mb x\|:=\max_{j\leq r} \|x_j\|$.

For $\varepsilon>0$, $r\in \mathbb N$, and $\mb x=(x_1,\dots,x_r)\in \mathbb T^r$, let
\[
w_\varepsilon(\mb x):= |\{j: \|x_j\|\geq \varepsilon\}|.
\]
So $w_\varepsilon(\mb x)$ is the number of coordinates of $\mb x$ differing from $0$ by at least $\varepsilon$.

\begin{definition}\label{def:ApproxHamm}
  For $k< r\in \mathbb N$, $\mb y\in \mathbb T^r$, and $\varepsilon>0$, we define the \emph{approximate Hamming ball} of radius $(k,\varepsilon)$ around $\mb y$ as
  \[
  \Hamm(\mb y; k,\varepsilon):=\{\mb x\in \mathbb T^r: w_{\varepsilon}(\mb y - \mb x)\leq k\}.
  \]
So $\Hamm(\mb y; k,\varepsilon)$ is the set of $\mb x=(x_1,\dots,x_r)\in \mathbb T^r$ where at most $k$ coordinates $x_i$ differ from $y_i$ by at least $\varepsilon$.
\end{definition}

If $Z$ is a topological abelian group, we say $\alpha\in Z$ \emph{generates} $Z$ if the cyclic subgroup $\{n\alpha:n\in \mathbb Z\}$ is dense in $Z$.  In other words, $\alpha$ generates $Z$ if $Z$ is the smallest closed subgroup containing $\alpha$.

The \emph{group rotation system} $(Z,\mathcal B, m_Z,R_{\alpha})$, where $\mathcal B$ is the Borel $\sigma$-algebra on $Z$ and $m_Z$ is Haar measure on $Z$, is given by $R_{\alpha}z=z+\alpha$.

\begin{definition}\label{def:BH}
  If $U=\Hamm(\mb y; k,\varepsilon) \subseteq \mathbb T^r$ is an approximate Hamming ball and $\bm\beta\in\mathbb T^r$, the corresponding \emph{Bohr-Hamming ball} of \emph{radius $(k,\varepsilon)$} is
  \[
    BH(\bm\beta,\mb y;k,\varepsilon):=\{n\in \mathbb Z:n\bm\beta\in U\}.
  \]
  If $\bm\beta$ generates $\mathbb T^r$, we say that the corresponding Bohr-Hamming ball is \emph{proper}.
\end{definition}
We write $m$ for Haar probability measure on $\mathbb T^r$.  Lemmas \ref{lem:ApproxHammNonrecurrent} and \ref{lem:BHNonrecurrent} here are implicit in \cite{Kriz} and proved explicitly in \cite{griesmer2020separating}.

\begin{lemma}\label{lem:ApproxHammNonrecurrent}
  Let $k\in \mathbb N$ and $\eta<\frac{1}{2}$. For all sufficiently large $r\in \mathbb N$ there is an $\varepsilon>0$ and  $E\subseteq \mathbb T^r$ with $m(E)>\eta$ such that $E\cap (E+U)=\varnothing$, where $U=\Hamm(\mb y;k,\varepsilon)$, with $\mb y = (\frac{1}{2},\dots, \frac{1}{2})\in \mathbb T^r$.
\end{lemma}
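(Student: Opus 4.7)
The plan is to build $E$ as the preimage of a small-diameter Hamming code in $\{0,1\}^r$ under a coordinatewise thresholding map, intersected with a \emph{robust region} that keeps every coordinate of $\mathbf x$ bounded away from the two circle points $0$ and $\tfrac12$ at which the threshold is sensitive. Concretely, I would define $\Phi\colon\mathbb T^r\to\{0,1\}^r$ by $\Phi(\mathbf x)_j = 0$ if $\tilde x_j\in[0,\tfrac12)$ and $\Phi(\mathbf x)_j = 1$ otherwise, and let $F\subseteq\{0,1\}^r$ be the Hamming ball of radius $t := \lfloor(r-k-1)/2\rfloor$ around the zero string, which has diameter at most $2t\le r-k-1$ by the triangle inequality. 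I would then set $B := \{\mathbf x\in\mathbb T^r : \|x_j\|\ge 2\varepsilon\text{ and }\|x_j-\tfrac12\|\ge 2\varepsilon\text{ for every }j\leq r\}$ and $E := \Phi^{-1}(F)\cap B$.

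For the measure bound, a product computation gives $m(E) = (|F|/2^r)\,(1-8\varepsilon)^r$, since on $B$ each fiber $\Phi^{-1}(w)$ is a product of intervals of length $\tfrac12-4\varepsilon$. The fraction $|F|/2^r$ equals $\Pr\bigl(\operatorname{Bin}(r,\tfrac12)\le t\bigr)$, and since $t = r/2 - O(1)$, the central limit theorem (or a direct binomial tail estimate) gives $|F|/2^r\to\tfrac12$ as $r\to\infty$. So I would first fix $\eta' \in (\eta,\tfrac12)$, then pick $r$ large enough that $|F|/2^r > \eta'$, and finally choose $\varepsilon > 0$ so small that $(1-8\varepsilon)^r > \eta/\eta'$; this yields $m(E)>\eta$.

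For the avoidance $E\cap(E+\mathbf u)=\varnothing$ with $\mathbf u\in U$: by definition of $U$, at least $r-k$ coordinates $j$ satisfy $\tilde u_j\in[\tfrac12-\varepsilon,\tfrac12+\varepsilon]$. For any $\mathbf x\in E$ and any such $j$, the robustness condition forces $\tilde x_j\in[2\varepsilon,\tfrac12-2\varepsilon]\cup[\tfrac12+2\varepsilon,1-2\varepsilon]$, and a direct case check tracking $\tilde x_j + \tilde u_j \bmod 1$ gives $\Phi(x_j+u_j)\ne\Phi(x_j)$. Hence the Hamming distance between $\Phi(\mathbf x)$ and $\Phi(\mathbf x+\mathbf u)$ is at least $r-k$. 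Since $\Phi(\mathbf x)\in F$ while $\operatorname{diam}(F)\le r-k-1$, this forces $\Phi(\mathbf x+\mathbf u)\notin F$, and so $\mathbf x+\mathbf u\notin E$.

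The construction is essentially routine once the framework is in place, and no individual step is hard. The main subtlety is the two-parameter balancing: $r$ must first be taken large so that $|F|/2^r$ exceeds $\eta'$ (approaching $\tfrac12$ from below), and only then may $\varepsilon$ be chosen small depending on $r$ to absorb the measure loss from restricting to $B$. It is also worth noting that the flipping argument only uses $\mathbf x\in B$ and not $\mathbf x+\mathbf u\in B$, since one needs $\mathbf x+\mathbf u\notin E$ rather than $\mathbf x+\mathbf u\notin\Phi^{-1}(F)$, which is why intersecting only $\Phi^{-1}(F)$ with $B$ (and not also $B+\mathbf u$) suffices.
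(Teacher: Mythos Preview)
Your proof is correct. The thresholding map, the choice of $F$ as a Hamming ball of radius $t=\lfloor(r-k-1)/2\rfloor$ so that $\operatorname{diam}(F)\le r-k-1$, the robust region $B$, the measure computation $m(E)=(|F|/2^r)(1-8\varepsilon)^r$, and the bit-flipping case check all go through as written. The two-stage choice of parameters (first $r$ large via the central limit theorem, then $\varepsilon$ small depending on $r$) is handled correctly, and your closing observation that only $\mathbf x\in B$ (not $\mathbf x+\mathbf u\in B$) is needed is exactly the right point.

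The paper does not give a self-contained argument here; it simply invokes \cite[Lemma~7.1]{griesmer2020separating} (case $p=2$), which supplies sets $E\subseteq E'\subseteq\mathbb T^r$ and an approximate Hamming ball $U_0$ about $0$ with $E+U_0\subseteq E'$ and $(E'+\mathbf y)\cap E'=\varnothing$, and then shifts $U_0$ by $\mathbf y=(\tfrac12,\dots,\tfrac12)$. The construction underlying that cited lemma is the same Kriz-type idea you carry out directly: pull back a Hamming ball in $\{0,1\}^r$ through a coordinatewise threshold, and buffer away from the threshold boundaries. So your argument is not a different route so much as an explicit, elementary unpacking of what the reference would provide; the advantage is that your write-up is self-contained and requires no outside lemma.
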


Lemma \ref{lem:ApproxHammNonrecurrent} is a consequence of Lemma 7.1 of \cite{griesmer2020separating}.  To derive the former from the latter, note that   \cite[Lemma 7.1]{griesmer2020separating} (in the case $p=2$ there) provides sets $E$, $E'\subseteq \mathbb T^r$ with $\mu(E)>\eta$, an approximate Hamming ball $U$ around $0_{\mathbb T^r}$ with radius $(k,\varepsilon)$ for some $\varepsilon>0$, such that $E+U\subseteq E'$ and $E'+ (\frac{1}{2},\dots,\frac{1}{2})$ is disjoint from $E'$.

\begin{lemma}\label{lem:BHNonrecurrent}
  Let $k\in \mathbb N$ and $\eta<\frac{1}{2}$. For all sufficiently large $r\in \mathbb N$, there is an $\varepsilon>0$ such that for all $\bm\beta\in \mathbb T^r$, the Bohr-Hamming ball $BH(\bm\beta,\mb y;k,\varepsilon)$ is $\eta$-nonrecurrent, where $\mb y = (\frac{1}{2},\dots,\frac{1}{2})\in \mathbb T^r$.
\end{lemma}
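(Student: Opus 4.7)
The plan is to deduce Lemma \ref{lem:BHNonrecurrent} directly from Lemma \ref{lem:ApproxHammNonrecurrent} by pulling back the set $E\subseteq \mathbb T^r$ along the rotation by $\bm\beta$, observing that the choice $\mb y=(\tfrac12,\dots,\tfrac12)$ makes the target approximate Hamming ball symmetric ($U=-U$), so the same Bohr-Hamming ball works uniformly in $\bm\beta$.

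First, I would fix $k$ and $\eta<\tfrac12$ and apply Lemma \ref{lem:ApproxHammNonrecurrent} to obtain an integer $r_0$ such that for every $r\geq r_0$ there exist $\varepsilon>0$ and a measurable set $E\subseteq \mathbb T^r$ with $m(E)>\eta$ and $E\cap (E+U)=\varnothing$, where $U=\Hamm(\mb y;k,\varepsilon)$ and $\mb y=(\tfrac12,\dots,\tfrac12)$. Crucially, both $r$ and $\varepsilon$ are produced without reference to $\bm\beta$, so they will serve for all $\bm\beta\in \mathbb T^r$ simultaneously.

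Next, for an arbitrary $\bm\beta\in\mathbb T^r$, I would consider the group rotation MPS $(\mathbb T^r,\mathcal B,m,R_{\bm\beta})$ with $R_{\bm\beta}(\mb z)=\mb z+\bm\beta$, and take the set $A:=E$, which has measure $m(A)>\eta$. For $n\in BH(\bm\beta,\mb y;k,\varepsilon)$ we have $n\bm\beta\in U$ by definition, and I would observe that $U$ is symmetric: since $\tfrac12=-\tfrac12$ in $\mathbb T$, we have $\mb y=-\mb y$ and hence $w_\varepsilon(\mb y-\mb x)=w_\varepsilon(\mb y+\mb x)=w_\varepsilon(\mb y-(-\mb x))$, so $U=-U$. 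Consequently $-n\bm\beta\in U$ as well.

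Finally, I would compute $R_{\bm\beta}^{-n}A=E-n\bm\beta$, so
\[
A\cap R_{\bm\beta}^{-n}A \;=\; E\cap (E-n\bm\beta) \;\subseteq\; E\cap (E+U) \;=\; \varnothing,
\]
showing $BH(\bm\beta,\mb y;k,\varepsilon)$ is $\eta$-nonrecurrent. There is no real obstacle here; the content lies entirely in Lemma \ref{lem:ApproxHammNonrecurrent}, and the only observation beyond that is the $\mb y=-\mb y$ symmetry that makes the bound on $n\bm\beta$ close to $\mb y$ equivalent to a bound on $-n\bm\beta$ close to $\mb y$, which is what allows a single $E$ to witness nonrecurrence uniformly over all choices of $\bm\beta$.
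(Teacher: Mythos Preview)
Your proof is correct and follows essentially the same route as the paper: invoke Lemma \ref{lem:ApproxHammNonrecurrent} to get $E$ and $U$, then use the rotation $R_{\bm\beta}$ on $\mathbb T^r$ with $A=E$. The only cosmetic difference is in handling the negative iterate: you observe $U=-U$ (from $\mb y=-\mb y$) to get $-n\bm\beta\in U$ directly, whereas the paper first shows $E\cap R_{\bm\beta}^{n}E=\varnothing$ and then appeals to invertibility of $R_{\bm\beta}$ to conclude $E\cap R_{\bm\beta}^{-n}E=\varnothing$; the latter does not use the special value of $\mb y$, but both arguments are equally short.
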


\begin{proof} Let $\eta<\frac{1}{2}$ and choose $r$ large enough to find the $E$ and $U$ provided by Lemma \ref{lem:ApproxHammNonrecurrent}, with $m(E)>\eta$. Let $(X,\mathcal B,\mu,T) = (\mathbb T^r,\mathcal B,m,R_{\bm\beta})$ be the group rotation on $\mathbb T^r$ determined by $\bm\beta$.  For $n\in BH(\bm\beta,\mb y;k,\eta)$, we have $R_{\bm\beta}^n E \subseteq E+U$, so $E\cap R_{\bm\beta}^n E=\varnothing.$  Since $R_{\bm\beta}$ is invertible, this means $E\cap R_{\bm\beta}^{-n}E =\varnothing$, as well.
\end{proof}

For $S\subseteq \mathbb Z$, let $\sqrt{S}:=\{n\in \mathbb Z:n^2 \in S\}$.

\begin{lemma}\label{lem:SqrtBHisrecurrent}
 For all $\delta>0$ there exists $k_0\in \mathbb N$ such that for every $r\in \mathbb N$, every proper Bohr-Hamming ball $BH:=BH(\bm\beta,\mb y; k, \varepsilon)$ with $k\geq k_0$, $\varepsilon>0$, and $\mb y\in \mathbb T^r$, $\sqrt{BH}$ is $(\delta,2)$-recurrent.
\end{lemma}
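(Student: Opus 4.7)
The plan is to establish the lemma by proving positivity of a weighted multiple ergodic average. Given an MPS $(X,\mathcal B,\mu,T)$ and $A\subseteq X$ with $\mu(A)>\delta$, set $f=\mathbf 1_A$ and choose a nonnegative Riemann integrable function $g:\mathbb T^r\to[0,1]$ supported on $U=\Hamm(\mb y;k,\varepsilon)$ with $\int g\,dm>0$. The goal is to show that
\[
  L:=\lim_{N\to\infty}\frac{1}{N}\sum_{n=1}^N g(n^2\bm\beta)\int f\cdot f\circ T^n\cdot f\circ T^{2n}\,d\mu
\]
is strictly positive once $k\geq k_0(\delta)$. Positivity of $L$ forces some single $n$ to satisfy both $g(n^2\bm\beta)>0$ (hence $n^2\bm\beta\in U$, i.e.\ $n\in\sqrt{BH}$) and $\mu(A\cap T^{-n}A\cap T^{-2n}A)>0$, which is exactly the required conclusion.

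First I would carry out the chain of reductions sketched in the outline. Standard ergodic decomposition together with a passage to a suitable power $T^q$ reduces the problem to the totally ergodic case (Section \ref{sec:MultipleErgodic}). Once $T$ is totally ergodic, the characteristic factor theory for length-three nonconventional averages (Host--Kra, Ziegler) shows that the unweighted integrand depends only on the projection of $f$ onto a 2-step nilfactor; the quadratic weight $g(n^2\bm\beta)$ is itself measurable with respect to a 2-step Weyl factor, since $n\mapsto n^2\bm\beta$ is a polynomial orbit on the torus. Combining these reductions as in Section \ref{sec:AffineReduction} reduces matters to evaluating $L$ when $T$ is a standard 2-step Weyl system acting affinely on a finite-dimensional torus, with joint equidistribution of $(n\bm\alpha,n^2\bm\beta)$ controlled via the properness hypothesis that $\bm\beta$ generates $\mathbb T^r$.

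In the 2-step Weyl case, $T^n\mb x$ is a polynomial of degree at most $2$ in $n$, so Fourier expansion of $f$ expresses the inner integral $\int f\cdot f\circ T^n\cdot f\circ T^{2n}\,d\mu$ as an exponential sum whose phases are polynomial in $n$. Averaging against $g(n^2\bm\beta)$ and using equidistribution on a joint torus then gives an explicit integral formula for $L$, whose leading ``diagonal'' term is proportional to $\int g\,dm$ times a classical Fourier count of three-term progressions in $f$, bounded below by a positive quantity depending only on $\delta$. The main obstacle, which I expect to occupy the bulk of Sections \ref{sec:Joinings}--\ref{sec:AffineLimits}, is bounding the off-diagonal error terms --- those in which the quadratic phase $n^2\bm\beta$ entangles nontrivially with the 2-step nilpotent structure --- uniformly over all standard 2-step Weyl systems and all triples $(\bm\beta,\mb y,\varepsilon)$. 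The threshold $k_0=k_0(\delta)$ is then forced by demanding that the Hamming ball $U$ be thick enough (as a function of $\delta$ alone) for the positive diagonal contribution to dominate these error terms; concretely, I anticipate matching a classical three-term Fourier lower bound of the form $\sum_{\mathbf k}|\widehat f(\mathbf k)|^2\widehat f(-2\mathbf k)\gtrsim \delta^3$ against the growth of $m(U)$ with $k$, so that once $k\geq k_0$ the positive term outweighs every off-diagonal contribution.
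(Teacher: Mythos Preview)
Your high-level plan---reduce to totally ergodic, then to 2-step Weyl, then compute the limit explicitly and compare a main term to an error---matches the paper's architecture. But the mechanism you propose for why large $k$ helps is wrong, and this is a genuine gap.

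You write that $k_0$ is forced ``by demanding that the Hamming ball $U$ be thick enough\dots\ against the growth of $m(U)$ with $k$.'' The measure of $U$ plays no role in the paper's argument, and there is no uniform-over-$g$ positivity statement. The point of the Hamming radius $k$ is combinatorial, not volumetric: the approximate Hamming ball $\Hamm(\mb y;k,\eta)$ is a union of cylinders $V_{I,\mb y,\eta}$ over all $(r-k)$-element index sets $I$, and one is free to \emph{choose} which cylinder to use. Given the $k$ characters $\psi_1,\dots,\psi_k$ carrying the largest Fourier mass of (the relevant transform of) $f$, one picks $I$ so that the cylinder function $g=\tfrac{1}{m(V)}1_{V_{I,\mb y,\eta}}$ annihilates those characters (Lemma \ref{lem:VAnnihilates}, Lemma \ref{lem:BHconvolvesToUniform}). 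Lemma \ref{lem:NumberOfLargeCoefficients} then forces every surviving Fourier coefficient to be below $k^{-1/2}$, and Lemma \ref{lem:SmallFourierToW} converts this into the error bound $|A(f,g)-L_3(f,T)|<2k^{-1/2}\|f\|^2$. So $g$ must be chosen \emph{depending on $f$}, and the error decays like $k^{-1/2}$ for that adapted $g$; there is no reason a fixed $g$ (or a volume argument) would give a uniform bound.

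A second, smaller gap: the passage from ergodic to totally ergodic is not as clean as ``ergodic decomposition plus a power.'' The paper first replaces $\mb X$ by an inverse limit of ergodic nilsystems (Lemma \ref{lem:NilCharacteristic}), then finds a nilsystem factor $\mb Y$ and an $\ell$ so that the ergodic components of $(Y,\nu,S^\ell)$ are totally ergodic (Lemma \ref{lem:TotallyErgodicFactor}). One then has \emph{many} components $\mb Y_i$, and the adapted $g$ (equivalently, the sequence $b_n\in\sqrt{BH}/\ell$) depends on which component one works in. A Markov-inequality argument (Lemmas \ref{lem:Markov1}--\ref{lem:Markov2}) is needed to find a single component on which both $\int f_i$ is large and $\|f_i-P_{\mb Y}f_i\|_{L^1}$ is small enough that the positivity survives the transfer back to $\mb X$. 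Your outline does not anticipate this difficulty.
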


Lemma \ref{lem:SqrtBHisrecurrent} is proved using multiple ergodic averages and  characteristic factors.  The main argument is given in \S\ref{sec:MultipleErgodic}, using several reductions developed in \S\ref{sec:MultipleErgodic}-\S\ref{sec:MainProof}.

\begin{proof}[Proof of Lemma \ref{lem:FinitePiece}]
  Let $\delta>0$ and $\eta<\frac{1}{2}$.  Choose $k$  large enough to satisfy the conclusion of Lemma \ref{lem:SqrtBHisrecurrent}.  With this $k$, choose $r>k$ and $\varepsilon$ small enough to satisfy the conclusion of Lemma \ref{lem:BHNonrecurrent}.  Let $\bm\beta\in\mathbb T^r$ be generating and let $BH=BH(\bm\beta,\mb y;k,\varepsilon)$, where $\mb y=(\frac{1}{2},\dots,\frac{1}{2})\in\mathbb T^r$, so that $BH$ is $\eta$-nonrecurrent.  Finally, let $S=\sqrt{BH}$, so that $S$ is $(\delta,2)$-recurrent, by Lemma \ref{lem:SqrtBHisrecurrent}.  Since $S^{\wedge 2}\subseteq BH$, we get that $S^{\wedge 2}$ is $\eta$-nonrecurrent, as desired.
\end{proof}

\subsection{Cylinders and Fourier coefficients}

Here we define constituents of approximate Hamming balls.
\begin{definition}\label{def:Cylinder}
Given $r\in \mathbb N$, $I\subseteq \{1,\dots,r\}$, $\eta>0$, and $\mb y\in \mathbb T^r$, define the \emph{$\eta$-cylinder determined by $I$ around $\mb y$} to be \[
V_{I,\mb y,\eta}:=\{\mb x\in \mathbb T^r : \|x_i-y_i\|<\eta \text{ for all } i \in I\},\]
so that
\begin{equation}\label{eqn:UunionV}
U:=\Hamm(\mb y;k,\eta) = \bigcup_{\substack{I\subseteq \{1,\dots, r\}\\ |I| = r-k}} V_{I,\mb y, \eta}.
\end{equation} We say that $g:\mathbb T\to \mathbb R$ is a \emph{cylinder function subordinate to $U$} if $g=\frac{1}{m(V)}1_V$, where $V$ is one of the cylinders $V_{I,\mb y,\eta}$ in (\ref{eqn:UunionV}). Note that each cylinder function subordinate to $U$ is supported on $U$.
\end{definition}

  Let $\mathcal S^1$ denote the circle group $\{z\in \mathbb C:|z|=1\}$ with the usual topology and the group operation of complex multiplication. If $Z$ is a compact abelian group with Haar probability measure $m$, $\widehat{Z}$ denotes its Pontryagin dual, meaning $\widehat{Z}$ is the group of continuous homomorphisms $\chi:Z\to \mathcal S^1$; such homomorphisms are called \emph{characters} of $Z$.  Given $f:Z\to \mathbb C$, its \emph{Fourier transform} is $\hat{f}:\widehat{Z}\to \mathbb C$ given by $\hat{f}(\chi)=\int f \overline{\chi}\, dm$.

   For $s\in Z$, let $f_s$ be the translate of $f$ defined by $f_s(x):=f(x-s)$.  Then $\widehat{f_s}(\chi)=\chi(s)\hat{f}(\chi)$ for each $\chi\in \widehat{Z}$.

  As usual, for $f, g:\mathbb Z\to \mathbb C$, $f*g$ denotes their \emph{convolution}, defined as $f*g(x):=\int f(t)g(x-t)\, dm(t)$.  We will use the standard identity $\widehat{f*g}=\hat{f}\hat{g}$ (the Fourier transform turns convolution into pointwise multiplication).

Letting $\|f\|:=\bigl(\int |f|^2\, dm\bigr)^{1/2}$ denote the $L^2(m)$ norm of $f$, we have the standard Plancherel identity (\ref{eqn:Plancherel}), which leads to the subsequent lemma.
 \begin{equation}\label{eqn:Plancherel}
 	\sum_{\chi\in \widehat{Z}} |\hat{f}(\chi)|^2 = \|f\|^2.
 \end{equation}

\begin{lemma}\label{lem:NumberOfLargeCoefficients}
	Let $Z$ be a compact abelian group with Haar probability measure $m$ and $f\in L^2(m)$.  If $\|f\|\leq 1$ and $|\hat{f}(\chi_1)|,\dots, |\hat{f}(\chi_k)|$ are the $k$ largest values of $|\hat{f}|$, then $|\hat{f}(\chi)|< k^{-\frac{1}{2}}$ for all $\chi \in \widehat{Z}\setminus\{\chi_1,\dots,\chi_k\}$.
	
\end{lemma}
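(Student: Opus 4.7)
The plan is to argue by contradiction using Plancherel's identity (\ref{eqn:Plancherel}). Suppose there exists some $\chi\in\widehat{Z}\setminus\{\chi_1,\dots,\chi_k\}$ with $|\hat{f}(\chi)|\geq k^{-1/2}$. Since $|\hat{f}(\chi_1)|,\dots,|\hat{f}(\chi_k)|$ are by hypothesis the $k$ largest values of $|\hat{f}|$, each of them must also satisfy $|\hat{f}(\chi_i)|\geq |\hat{f}(\chi)|\geq k^{-1/2}$.

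This produces $k+1$ distinct characters $\chi_1,\dots,\chi_k,\chi$ each with $|\hat{f}|^2\geq 1/k$. Summing over just these characters and applying (\ref{eqn:Plancherel}) yields
\[
1 \geq \|f\|^2 = \sum_{\psi\in \widehat{Z}} |\hat{f}(\psi)|^2 \geq (k+1)\cdot \frac{1}{k} > 1,
\]
which is a contradiction. Hence every $\chi\notin\{\chi_1,\dots,\chi_k\}$ must satisfy $|\hat{f}(\chi)|<k^{-1/2}$, as claimed.

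There is no real obstacle here: the statement is a one-line pigeonhole-style consequence of Plancherel, and the only mild subtlety is keeping the inequalities strict versus non-strict, which the argument above handles by noting that a non-strict lower bound of $k^{-1/2}$ on $k+1$ Fourier coefficients already forces the Plancherel sum strictly above $1$.
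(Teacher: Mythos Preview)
Your proof is correct and follows essentially the same idea as the paper's: both use Plancherel together with the observation that the $k$ largest Fourier coefficients are all at least as large as any remaining one. The only cosmetic difference is that the paper argues directly by setting $c=\max_{\chi\notin\{\chi_1,\dots,\chi_k\}}|\hat{f}(\chi)|$ and deducing $c^2\leq 1-kc^2$, hence $c\leq(k+1)^{-1/2}<k^{-1/2}$, whereas you phrase the same computation as a contradiction.
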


\begin{proof}
Let $S_1 = \{\chi_1,\dots, \chi_k\}$ be the set of characters attaining the $k$ largest values of $|\hat{f}|$, let $S_2 = \widehat{Z}\setminus S_1$, and let $c=\max\{|\hat{f}(\chi)|:\chi\in S_2\}$.  By definition, we have $|\hat{f}(\chi)|\geq c$ for all $\chi \in S_1$.

We split the left hand side of (\ref{eqn:Plancherel}) into sums over $\chi \in S_1$ and $\chi \in S_2$, then subtract the sum over $S_1$ to get
\[\sum_{\chi \in S_2} |\hat{f}(\chi)|^2 = \|f\|^2 - \sum_{\chi\in S_1} |\hat{f}(\chi)|^2.\]  Since $|\hat{f}(\chi)|\geq c$ for all $\chi \in S_1$, the right hand side is bounded above by $\|f\|^2 - kc^2$.  Since $c\leq |\hat{f}(\chi)|$ for at least one $\chi \in S_2$, the left hand side above is bounded below by $c^2$.  So
\[
c^2\leq \sum_{\chi \in S_2} |\hat{f}(\chi)|^2 = \|f\|^2 - \sum_{\chi\in S_1} |\hat{f}(\chi)|^2 \leq 1-kc^2,
\] 	
which implies $c^2\leq 1-kc^2$.  Solving, we get $c\leq (1+k)^{-\frac{1}{2}}$. This means $|\hat{f}(\chi)|< k^{-1/2}$ for all $\chi\in S_2$.
\end{proof}
\begin{remark}
  The exact form of the inequality in Lemma \ref{lem:NumberOfLargeCoefficients} is not important; we only need $\sup_{\chi\in \widehat{Z}\setminus \{\chi_1,\dots,\chi_k\}} |\hat{f}(\chi)|\leq c(k)$, where $c(k)\to 0$ as $k\to\infty$, uniformly for $\|f\|\leq 1$.
\end{remark}

Much of the proof of Lemma \ref{lem:SqrtBHisrecurrent} is contained in Lemma \ref{lem:VAnnihilates}.  The actual application requires a technical generalization (Lemma \ref{lem:BHconvolvesToUniform}).

\begin{lemma}\label{lem:VAnnihilates} Fix $k<r\in \mathbb N$, and let $U\subseteq \mathbb T^r$ be an approximate Hamming ball of radius $(k,\eta)$ with $\eta>0$.
 \begin{enumerate}
 	\item[(i)]  Let $\chi_1,\dots,\chi_k\in \widehat{\mathbb T}^r$ be nontrivial.   Then there is a cylinder function $g$ subordinate to $U$ such that for all $s\in \mathbb T^r$, we have
\[
  	\widehat{{g}_{s}}(\chi_j)=0 \qquad \text{for each } j\leq k.
\]
\item[(ii)] If $f\in L^2(m_{\mathbb T^r})$ with $\|f\|\leq 1$, there is a cylinder function $g$ subordinate to $U$ so that
\[
	|\widehat{f*g}(\chi)|< k^{-1/2} \qquad \text{for all } \chi\in\widehat{\mathbb T}^d.
\]
\end{enumerate}
\end{lemma}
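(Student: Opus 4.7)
The plan is to identify $\widehat{\mathbb T}^r$ with $\mathbb Z^r$ via $\chi_{\mb n}(\mb x) = e^{2\pi i\, \mb n\cdot \mb x}$ and exploit the product structure of cylinder functions. For $g = \frac{1}{m(V)}1_V$ with $V = V_{I,\mb y,\eta}$, Fubini factors $\hat g(\chi_{\mb n})$ coordinate by coordinate; in each coordinate $i \notin I$ the factor is $\int_{\mathbb T} e^{-2\pi i n_i x}\,dx$, which vanishes whenever $n_i \neq 0$. Consequently $\hat g(\chi_{\mb n}) = 0$ unless $\supp(\mb n) \subseteq I$, while the universal bound $|\hat g(\chi)| \le \|g\|_{L^1} = 1$ is always in force. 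Both parts then reduce to choosing the index set $I$ appropriately.

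For part (i), write each nontrivial $\chi_j = \chi_{\mb n_j}$ with $\mb n_j \in \mathbb Z^r \setminus \{0\}$, so that each $\supp(\mb n_j)$ is nonempty. I need $I \subseteq \{1,\dots,r\}$ with $|I| = r - k$ such that $\supp(\mb n_j) \not\subseteq I$ for every $j$, equivalently so that $I^c$ (of size $k$) intersects each $\supp(\mb n_j)$. A greedy argument suffices: pick any $i_j \in \supp(\mb n_j)$ for $j = 1,\dots,k$, collect the resulting at-most-$k$ elements, and (using $r > k$) pad arbitrarily to exactly $k$ elements to form $I^c$. The resulting cylinder function $g$ then satisfies $\hat g(\chi_j) = 0$ for all $j \le k$, and since $\widehat{g_s}(\chi) = \chi(s)\hat g(\chi)$, this transfers to every translate.

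For part (ii), apply Lemma \ref{lem:NumberOfLargeCoefficients} to produce characters $\chi_1,\dots,\chi_k$ carrying the $k$ largest values of $|\hat f|$, so that $|\hat f(\chi)| < k^{-1/2}$ for every $\chi$ outside this list. Feed the nontrivial ones among $\chi_1,\dots,\chi_k$ (padded back up to $k$ targets with arbitrary nontrivial characters if the trivial character appears in the top-$k$ list) into part (i) to obtain a cylinder function $g$ with $\hat g(\chi_j) = 0$ on every nontrivial $\chi_j$ from that list. Since $\widehat{f*g} = \hat f \hat g$ and $|\hat g| \le 1$ everywhere, the product vanishes on the annihilated characters and is strictly bounded by $k^{-1/2}$ on the remaining ones.

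\textbf{Main obstacle.} The combinatorial and Fourier-analytic content is light; the one genuine subtlety lies in (ii) with the trivial character, since any cylinder function has $\hat g(\chi_0) = \int g\,dm = 1$ and thus cannot touch the mean of $f$. Part (ii) is really a statement about nontrivial characters, and the intended application presumably either restricts to those or handles $\hat f(\chi_0)$ separately (for instance by replacing $f$ with $f - \int f\,dm$). Once this bookkeeping is sorted out, the whole proof is a combination of the product-Fourier structure of cylinders with a one-line greedy hitting-set construction.
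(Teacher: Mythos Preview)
Your argument is correct and essentially identical to the paper's: both pick one nonzero coordinate index $l_j$ from each $\chi_j$, set $I^c=\{l_1,\dots,l_k\}$ (padded if necessary), and use the product structure of $V_{I,\mb y,\eta}$ to make $\hat g(\chi_j)$ vanish via the free coordinate integral; part (ii) then follows from part (i) together with Lemma~\ref{lem:NumberOfLargeCoefficients} and $|\hat g|\le 1$, exactly as you describe. Your observation about the trivial character is in fact sharper than the paper's own proof, which applies part (i) without comment---your instinct is right that the intended application (Lemma~\ref{lem:BHconvolvesToUniform}, the ``technical generalization'' alluded to just before the statement) only asserts the bound for characters $(\chi,\psi)$ with $\psi$ nontrivial, so the issue never bites.
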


\begin{proof} (i)	Assuming $k$, $r$, $\chi_j$, and $U$ are as in the statement, we may write $\chi_j$ as
\begin{equation}\label{eqn:ExplicitCharacter}
\chi_j(x_1,\dots,x_r)=e\Bigl( \sum_{l=1}^{r} n^{(j)}_lx_l\Bigr),
\end{equation}
where $e(t):=\exp(2\pi i t)$ and $n^{(j)}_l\in \mathbb Z$. Nontriviality of $\chi_j$ means that for each $j$, at least one of the $n^{(j)}_l$ is nonzero.  So choose one such index $l_j$ for each $j\leq k$ and let $I=\{1,\dots,r\}\setminus \{l_1,\dots,l_k\}$.  In case some of the $l_j$ repeat, remove additional elements from $I$ so that $|I|=r-k$.

Writing $U$ as $\Hamm(\mb y;k,\eta)$, let $V = V_{I,\mb y,\eta}=\{\mb x\in \mathbb T^d:\|y_l-x_l\|<\eta \text{ for all } l \in I\}$, so that $V\subseteq U$. Let $g:=\frac{1}{m(V)}1_V$, so that $g$ is a cylinder function subordinate to $U$, and let $j\leq k$.  To prove that $\hat{g}(\chi_j)=0$, note that $g$ does not depend on any of the coordinates $x_{l_j}$, so we can simplify the right hand side of (\ref{eqn:ExplicitCharacter})  as $e\Bigl(\sum_{\substack{l=1 \\ l\neq l_j}}^{r} n_{l}^{(j)}x_l\Bigr)e(n_{l_j}^{(j)} x_{l_j})$ and write $\hat{g}(\chi_j)=\int g \overline{\chi}_j \, dm$ as
\[
\int_{\mathbb T^{r-1}} g(x_1,\dots,x_r)   e\Bigl(-\sum_{\substack{l=1 \\ l\neq l_j}}^{r} n_{l}^{(j)}x_l\Bigr)\, dx_1\dots \, dx_{l_{j-1}}\, dx_{l_{j+1}}\, \dots \, dx_{r} \, \int_{\mathbb T} e(-n_{l_j}^{(j)} x_{l_j})\, dx_{l_j}.
\]
Since $\int e(-n_{l_j}^{(j)} x_{l_j})\, dx_{l_j}=0$, we conclude that $\hat{g}(\chi_j)=0$ for each $j$.  To complete the proof of Part (i), we observe that $\widehat{g_{s}}(\chi)=\chi(s)\hat{g}(\chi)$ for each $\chi$.

To prove (ii), assume $f:\mathbb T^r\to \mathbb C$ has $\|f\|\leq 1$, and let $|\hat{f}(\chi_1)|, \dots, |\hat{f}(\chi_k)|$ be the $k$ largest values of $|\hat{f}|$. By Part (i), choose a cylinder function $g$ subordinate to $U$ so that $\hat{g}(\chi_j)=0$ for these $\chi_j$. Then $|\hat{f}(\chi)|< k^{-1/2}$ for all other $\chi$, by Lemma \ref{lem:NumberOfLargeCoefficients}.   Note that $|\hat{g}(\chi)|\leq 1$ for all $\chi\in \widehat{\mathbb T}^d$, since $\int |g|\, dm =1$.     We therefore have $\widehat{f*g}(\chi_j)=\hat{f}(\chi_j)\hat{g}(\chi_j)=0$ for $j=1,\dots,k$, while $|\widehat{f*g}(\chi)|\leq |\hat{f}(\chi)|<k^{-1/2}$ for all other $\chi$. \end{proof}

\section{Multiple ergodic averages}\label{sec:MultipleErgodic}
Some of our reductions use facts from the general theory of nilsystems, mainly contained in \cite{FrantzikinakisThreePoly} and \cite{FrKrPolyAffine}.  Readers who want a general introduction to the theory can consult \cite{HostKraBook}.

If $(X,\mathcal B,\mu,T)$ is an MPS and $f$ is a bounded function on $X$, let
\[
L_3(f,T):=\lim_{N\to\infty} \frac{1}{N}\sum_{n=1}^N \int f\cdot T^n f\cdot T^{2n}f\, d\mu.
\]
The existence of this limit was established in \S 3 of \cite{F77}.

In this section we prove Lemma \ref{lem:SqrtBHisrecurrent} using Lemma \ref{lem:BHKroneckerCharacteristic}, which estimates variants of $L_3(f,T)$.  In \S\ref{sec:Reformulation} we state a more convenient form of Lemma \ref{lem:BHKroneckerCharacteristic} and outline its proof.

We will use the following known result, which follows by combining a special case of Theorem 2.1 of \cite{BHMP} with the multidimensional Szemer\'edi Theorem \cite{FK79}.

\begin{theorem}\label{thm:UniformSz}
	For all $\delta>0$, there exists $c(\delta)>0$ such that for every MPS $(X,\mathcal B,\mu,T)$ and every $f:X\to [0,1]$ with $\int f\, d\mu> \delta$ we have
	\begin{equation}\label{eqn:UniformRoth}
		L_3(f,T) > c(\delta).
	\end{equation}
\end{theorem}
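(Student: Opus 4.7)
The plan is to reduce to indicator functions, convert the ergodic multiple average into a combinatorial 3-AP count via the correspondence cited from \cite{BHMP}, and then invoke the multidimensional Szemer\'edi theorem for the uniform lower bound.

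First I would reduce to indicator functions. Given $0\leq f\leq 1$ with $\int f\,d\mu>\delta$, set $A:=\{x:f(x)>\delta/2\}$. Since $f\leq 1$, we get $\delta<\int f\,d\mu\leq\mu(A)+(\delta/2)\mu(A^c)\leq\mu(A)+\delta/2$, so $\mu(A)>\delta/2$; moreover $f\geq(\delta/2)\mathbf 1_A$ pointwise, so $L_3(f,T)\geq(\delta/2)^3 L_3(\mathbf 1_A,T)$. It therefore suffices to produce $c'(\alpha)>0$ such that $L_3(\mathbf 1_A,T)\geq c'(\alpha)$ whenever $\mu(A)>\alpha$, taking $\alpha=\delta/2$.

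Next I would apply the cited special case of Theorem 2.1 of \cite{BHMP}. That result is a Furstenberg-style correspondence producing, from an MPS $(X,\mathcal B,\mu,T)$ and $A\subseteq X$ with $\mu(A)>\alpha$, a set $E\subseteq\mathbb Z$ of upper Banach density at least $\mu(A)$ together with a sequence of intervals $[M_N,M_N+N]$ along which the number of 3-APs $(a,a+n,a+2n)$ contained in $E\cap[M_N,M_N+N]$ is bounded above, up to $o(N^2)$, by $L_3(\mathbf 1_A,T)\cdot N^2$. Consequently any uniform combinatorial lower bound on the 3-AP count in density-$\alpha$ subsets of $\mathbb Z$ transfers to a uniform lower bound on the ergodic average.

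Finally I would invoke the multidimensional Szemer\'edi theorem of \cite{FK79} (in the length-3 case this is Szemer\'edi's original theorem, or equivalently Roth's theorem): every $E\subseteq\mathbb Z$ of upper Banach density exceeding $\alpha$ contains at least $\rho(\alpha)N^2$ three-term arithmetic progressions in some interval of length $N$, for all sufficiently large $N$, where $\rho(\alpha)>0$ depends only on $\alpha$. Composing with the previous step yields $L_3(\mathbf 1_A,T)\geq\rho(\alpha)$, and backing out through the indicator reduction gives $c(\delta):=(\delta/2)^3\rho(\delta/2)$.

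The main technical obstacle is verifying the direction of the inequality in the BHMP correspondence: for the plan to work one needs the ergodic average to majorize (up to $o(1)$) the combinatorial 3-AP density, so that combinatorial lower bounds imply ergodic lower bounds rather than only the reverse transfer familiar from Furstenberg's original correspondence principle. Once this one-sided transfer is in place, the quantitative content of Szemer\'edi's theorem delivers the required $c(\delta)>0$ without further work.
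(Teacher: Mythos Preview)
The paper does not give a proof of this theorem; it simply records that the statement follows by combining a special case of \cite[Theorem~2.1]{BHMP} with the multidimensional Szemer\'edi theorem of \cite{FK79}. Your proposal is an attempt to flesh out how those two ingredients combine, so there is no detailed argument in the paper to compare against line by line.

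Your reduction to indicator functions is correct and standard. The remaining two steps, however, do not match what the cited results actually provide. Theorem~2.1 of \cite{BHMP} is not a ``reverse correspondence principle'' converting a single ergodic system into a combinatorial $3$-AP count; rather, the uniformity in \cite{BHMP} is obtained by assembling a sequence of would-be counterexample systems into a single higher-rank action and applying \cite{FK79} to that. This is why the paper invokes the \emph{multidimensional} Szemer\'edi theorem, whereas your Step~3 only uses the one-dimensional Roth--Varnavides bound. So your description of what \cite{BHMP} supplies is likely inaccurate, as you yourself suspect in your final paragraph.

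That said, the route you sketch can be completed independently of \cite{BHMP}, and with only one-dimensional input. The gap you flag---passing from an MPS to a combinatorial $3$-AP count---is exactly the ``(ii)$\Rightarrow$(iii)'' direction of the paper's own Lemma~\ref{lem:RecurrenceEquivalence}: after reducing to ergodic $T$, for $\mu$-almost every $x$ the return-time set $E_x=\{n:T^nx\in A\}$ has $|E_x\cap[N]|/N\to\mu(A)$, and the number of $3$-APs in $E_x\cap[N]$ divided by $N^2$ tends to $L_3(\mathbf 1_A,T)$. Feeding this into the Varnavides form of Roth's theorem gives the uniform lower bound directly. So your overall strategy is sound once you replace the appeal to \cite{BHMP} with this generic-point argument; it is then more elementary than the paper's cited route, avoiding multidimensional Szemer\'edi entirely.
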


\begin{definition}
  We say that $\mb X=(X,\mathcal B,\mu,T)$ is \emph{ergodic} if $\mu(A\triangle T^{-1}A)=0$ implies $\mu(A)=0$ or $\mu(A)=1$ for every $A\in \mathcal B$. We say that $\mb X$ is \emph{totally ergodic} if for every $m\in \mathbb N$, the system $(X,\mathcal B,\mu,T^m)$ is ergodic.
\end{definition}

\begin{remark}\label{rem:Standard}
  When determining whether a set is a set of $k$-recurrence, we may restrict our attention to \emph{ergodic} MPSs where $\mu$ is a regular Borel measure on a compact metric space $X$; cf.~\S\S 7.2.2-7.2.3 of \cite{EinsiedlerWard}.
\end{remark}

When we say a sequence $(b_n)_{n\in\mathbb N}$ of natural numbers has \emph{linear growth}, we mean that it is strictly increasing and $\limsup_{n\to\infty} b_n/n < \infty$. Note that a strictly increasing sequence has linear growth if and only if the set of terms $B=\{b_n:n\in \mathbb N\}$ satisfies $\underbar{d}(B):=\liminf_{n\to\infty} \frac{|B\cap \{1,\dots,n\}|}{n}>0$. Enumerating the positive elements of $\sqrt{BH}$ in increasing order, where $BH$ is a proper Bohr-Hamming ball always results in a sequence of linear growth.  To see this, write $\sqrt{BH}$ as $\{n\in \mathbb Z:n^2\bm\beta\in U\}$ for some approximate Hamming ball $U\subseteq \mathbb T^r$ and generator $\bm\beta\in \mathbb T^r$.  Then \[\lim_{N\to\infty}\frac{|\sqrt{BH} \cap [1,\dots N]|}{N} = \lim_{N\to\infty} \frac{1}{N}\sum_{n=1}^N 1_U(n^2\bm\beta) = m(U),\] by Weyl's theorem on uniform distribution of polynomials (see Lemma \ref{lem:ConnectedWeyl} below).  Since $n = |\sqrt{BH}\cap [1,\dots,b_n]|$, this implies $b_n/n$ is bounded. Likewise, if $g$ is a cylinder function subordinate to $U$ (Definition \ref{def:Cylinder}), then enumerating  $\{n\in \mathbb N: g(n^2\bm\beta)>0\}$ in increasing order results in a sequence of linear growth.

The next lemma says that $L_3(f,T)$ can be approximated by averaging over elements of $\sqrt{BH}$, provided $\mb X$ is \emph{totally ergodic} and $BH$ is a proper Bohr-Hamming ball of radius $(k,\eta)$ with $k$ sufficiently large.  In passing to the general case, we need to consider $\sqrt{BH}/\ell:=\{n\in \mathbb Z: \ell n \in \sqrt{BH}\}$.

\begin{lemma}\label{lem:BHKroneckerCharacteristic}
  For all $\varepsilon>0$, there is a $k\in \mathbb N$ such that for every totally ergodic MPS $(X,\mathcal B,\mu,T)$, every $f:X\to [0,1]$, every proper Bohr-Hamming ball $BH$ of radius $(k,\eta)$ ($\eta>0$), and all $\ell\in \mathbb N$, there is a sequence $b_n\in \sqrt{BH}/\ell$  having linear growth such that
  \begin{equation}\label{eqn:RothApprox}
  \lim_{N\to \infty} \Bigl|\frac{1}{N}\sum_{n=1}^N \int f\cdot T^{b_n}f \cdot T^{2b_n}f\, d\mu - L_3(f,T)\Bigr|<\varepsilon \|f\|^2.
  \end{equation}
  Consequently, if $\int f\, d\mu>\delta$ and $k$ is sufficiently large (depending only on $\delta$), we have
  \begin{equation}\label{eqn:RothAlongSqrtBH}
  \lim_{N\to\infty} \frac{1}{N}\sum_{n=1}^N \int f\cdot T^{b_n}f \cdot T^{2b_n}f\, d\mu > c(\delta)/2,
  \end{equation}
  where $c(\delta)$ is defined in Theorem \ref{thm:UniformSz}.
\end{lemma}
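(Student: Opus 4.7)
The plan is to exhibit $(b_n)$ as the enumeration of $\{m \in \mathbb N : g(m^2 \bm\gamma) > 0\}$ for a cleverly chosen cylinder function $g = m(V)^{-1} 1_V$ subordinate to the approximate Hamming ball $U$ defining $BH$, where $\bm\gamma := \ell^2 \bm\beta$. Because $\bm\beta$ generates $\mathbb T^r$, so does $\bm\gamma$ (any $\mathbb Z$-linear combination of the coordinates remains irrational after multiplying by $\ell^2$), and Weyl's equidistribution theorem for the quadratic orbit $n \mapsto n^2 \bm\gamma$ then gives that $\{m : m^2 \bm\gamma \in V\}$ has density $m(V) > 0$, hence linear growth, and is trivially contained in $\sqrt{BH}/\ell$. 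A standard density argument reduces the desired estimate (\ref{eqn:RothApprox}) to controlling the weighted average
\[
A_N(f,g) := \frac{1}{N} \sum_{n=1}^N g(n^2 \bm\gamma) \int f \cdot T^n f \cdot T^{2n} f \, d\mu.
\]

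Next, I would invoke the characteristic-factor machinery developed in the subsequent sections---successive reductions through 2-step nilfactors, affine factors, and ultimately the Kronecker factor $(Z, m_Z, R_\alpha)$ of $T$---to replace $f$ by its projection $F := \mathbb E[f \mid Z]$, with negligible effect on both $A_N(f,g)$ and $L_3(f,T)$. On the group rotation $(Z, R_\alpha)$, a Fourier expansion of $F$ combined with joint Weyl equidistribution of the 2-step polynomial orbit $(n\alpha, n^2\bm\gamma) \in Z \times \mathbb T^r$ allows explicit evaluation of $\lim_N A_N(F,g)$: the resonant (main) term, corresponding to $\hat g(0) = 1$, equals $L_3(F, T|_Z) = L_3(f, T)$, while the remaining contributions are sums of products of the form $\hat F(\chi_1) \hat F(\chi_2) \hat F(\chi_3) \hat g(\bm m)$ indexed by non-trivial joint spectral parameters.

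The key move is then to bound these error terms by choosing $V$ via the technical generalization of Lemma \ref{lem:VAnnihilates}(ii) (namely Lemma \ref{lem:BHconvolvesToUniform}) applied to the combined spectral data of $F$ on $Z$ and of the ambient torus $\mathbb T^r$. The resulting $g$ annihilates the finitely many large-norm joint characters entering the error sum, while the remaining terms are uniformly controlled by $k^{-1/2}$, giving $|\lim_N A_N(F,g) - L_3(f,T)| < \varepsilon \|f\|^2$ once $k$ is large in terms of $\varepsilon$. The consequence (\ref{eqn:RothAlongSqrtBH}) then follows by taking $\varepsilon = c(\delta)/2$ and invoking Theorem \ref{thm:UniformSz}, which guarantees $L_3(f,T) > c(\delta)$, hence $\lim_N A_N(f,g) > c(\delta)/2$.

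The principal obstacle is making the characteristic-factor reduction compatible with the quadratic weight $g(n^2 \bm\gamma)$: the $n^2$ phase generates genuine 2-step polynomial behaviour, so the Kronecker factor is not immediately characteristic as it would be for the unweighted $L_3$ average. This forces the proof to descend through the nilsystem $\to$ affine $\to$ Kronecker tower, analysing the joint orbit in $X \times \mathbb T^r$ as a whole. The most delicate technical piece is the explicit equidistribution computation on the 2-step affine Weyl system together with the uniform Fourier decay of $F * g$, which is exactly what Lemma \ref{lem:VAnnihilates}(ii) and its generalization are engineered to supply.
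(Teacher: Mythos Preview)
Your overall architecture matches the paper's: reduce (\ref{eqn:RothApprox}) to controlling the weighted average $A_N(f,g)$ for a cylinder function $g$ subordinate to $U$, pass through the characteristic-factor tower, compute the limit explicitly, and choose $g$ via Lemma~\ref{lem:BHconvolvesToUniform} to make the error $O(k^{-1/2})$. The derivation of (\ref{eqn:RothAlongSqrtBH}) from (\ref{eqn:RothApprox}) and Theorem~\ref{thm:UniformSz} is also correct.

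There is, however, one real slip in your middle two paragraphs. You propose to replace $f$ by $F=\mathbb E[f\mid Z]$ on the Kronecker factor and then compute $\lim_N A_N(F,g)$ by equidistribution of $(n\alpha,n^2\bm\gamma)$ in $Z\times\mathbb T^r$. As you yourself note in your final paragraph, the quadratic weight makes the Kronecker factor \emph{not} characteristic for $A_N$, so this replacement is not justified and the computation on $Z\times\mathbb T^r$ would miss the genuine $2$-step contribution. The paper never descends all the way to $Z$ for the weighted average. Instead, the reduction (Lemma~\ref{lem:ReductionToWeyl}) lands on a standard $2$-step Weyl system $\mathbb T^d\times\mathbb T^d$, and the explicit limit formula (Proposition~\ref{prop:WeakLimit}) is an integral on $(\mathbb T^d)^4$ involving $f*_\Gamma g$, where $f$ still lives on the full two-coordinate torus. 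Lemma~\ref{lem:BHconvolvesToUniform} is then applied to this $f:\mathbb T^d\times\mathbb T^d\to[0,1]$ (not to $F$ on $Z$): one chooses $g$ so that $\widehat{f*_\Gamma g}(\chi,\psi)$ is small for every nontrivial \emph{second-coordinate} character $\psi$. Only after $g$ is fixed does Lemma~\ref{lem:SmallFourierToW} collapse the integral to the Kronecker expression $J'$, which equals $L_3(f,T)$ by Observation~\ref{obs:KroneckerWeyl}. So the correct order is: reduce to $2$-step Weyl, compute the limit there, choose $g$ to suppress the second-coordinate spectrum of $f*_\Gamma g$, and only then compare with the Kronecker formula. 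Your final paragraph gestures at exactly this, but it contradicts the plan laid out in the preceding two.
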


Lemma \ref{lem:BHKroneckerAlternative} is a convenient reformulation of Lemma \ref{lem:BHKroneckerCharacteristic}. In \S\ref{sec:Reformulation} we outline its proof, which occupies the majority of this article.

\begin{remark}
  We do not know whether the condition ``totally ergodic'' can be replaced with ``ergodic'' in Lemma \ref{lem:BHKroneckerCharacteristic}.  The main obstruction to this replacement is our lack of a convenient representation of ergodic, but not totally ergodic,  2-step affine nilsystems.
\end{remark}

\subsection{Factors and Extensions}\label{sec:FactorExtension}

If $\mb X = (X,\mathcal B,\mu,T)$ and $\mb Y = (Y,\mathcal D,\nu,S)$ are MPSs, we say that $\mb Y$ is a \emph{factor} of $\mb X$ if there is a measurable $\pi: X\to Y$ intertwining $S$ and $T$, meaning
\[
\pi(Tx) = S\pi(x) \qquad \text{for } \mu\text{-a.e.~} x\in X,
\]
and $\mu(\pi^{-1}D) = \nu(D)$ for all $D\in \mathcal D$. Strictly speaking, the factor is the \emph{pair} $(\pi, \mb Y)$, and we refer to ``the factor $\pi:\mb X\to \mb Y$''.

If $\pi:\mb X\to \mb Y$ is a factor and $f\in L^2(\mu)$ is equal $\mu$-almost everywhere to a function of the form $g\circ \pi$, with $g\in L^2(\nu)$, we say that $f$ is \emph{$\mb Y$-measurable}.  This is equivalent to saying that $f$ is $\pi^{-1}(\mathcal D)$-measurable (modulo $\mu$).  We denote by $P_{\mb Y}$ the orthogonal projection from $L^2(\mu)$ to the space of $\pi^{-1}(\mathcal D)$-measurable functions.  Given $f\in L^2(\mu)$, we identify $P_{\mb Y}f$ with $\tilde{f}\in L^2(\nu)$ satisfying $P_{\mb Y} f = \tilde{f}\circ \pi$.

We repeatedly use, without comment, the fact that $P_{\mb Y}$ is a positive operator preserving integration with respect to $\mu$.  In other words, if $f(x)\geq 0$ for $\mu$-a.e.~$x$, then $P_{\mb Y}f(x)\geq 0$ for $\mu$-a.e.~$x$, and $\int f\, d\mu = \int P_{\mb Y} f\, d\mu$. Consequently, $\sup f \geq \tilde{f}(y)\geq \inf f$ for $\nu$-a.e.~$y$ and $\int \tilde{f}\, d\nu = \int f\, d\mu$.

\begin{remark}\label{rem:Extensions}
When $\pi: \mb X\to \mb Y$ is a factor, we say that $\mb X$ is an \emph{extension} of $\mb Y$.  If we wish to prove an inequality on ergodic averages for a system $\mb Y$, it suffices to prove that inequality for an extension $\pi:\mb X\to \mb Y$, since the integrals $\int f_0\cdot S^af_1\cdot S^{b}f_2\, d\nu$ can be written as $\int h_0\cdot T^a h_1\cdot T^{b}h_2\, d\mu$, where $h_i = f_i\circ \pi$.  This observation will be used in \S\ref{sec:MainProof}.
\end{remark}
\subsection{Reducing to total ergodicity}
The next lemma is used to deduce Lemma \ref{lem:SqrtBHisrecurrent} from  Lemma \ref{lem:BHKroneckerCharacteristic} and Theorem \ref{thm:UniformSz}.  Part (i) is a special case of Corollary 4.6 in \cite{BHK}, and Part (ii) is an immediate consequence of Part (i).  Here ``$\mb Y$ is an inverse limit of ergodic nilsystems'' means that for all $f\in L^\infty(\nu)$ and $\varepsilon>0$, there is an a factor $\pi:\mb Y\to \mb Z$, where $\mb Z=(Z,\mathcal Z,\eta,R)$ is an ergodic nilsystem and $\|f-P_{\mb Z}f\|_{L^1(\nu)}<\varepsilon$.

\begin{lemma}\label{lem:NilCharacteristic}
Let $\mb X=(X,\mathcal B,\mu,T)$ be an ergodic measure preserving system. There is a factor $\pi:\mb X\to \mb Y=(Y,\mathcal D,\nu,S)$ which is an inverse limit of ergodic nilsystems such that

\begin{enumerate}
\item[(i)] for all $f_i\in L^\infty(\mu)$, letting $\tilde{f}_i\circ \pi =P_{\mb Y}f_i$, we have
\[
\lim_{N\to \infty} \frac{1}{N}\sum_{n=1}^N \Bigl| \int f_0\cdot T^n f_1\cdot T^{2n}f_2 \, d\mu - \int \tilde{f}_0\cdot S^n \tilde{f}_1\cdot S^{2n}\tilde f_2\, d\nu\Bigr|=0.
\]

\item[(ii)]  If $(b_n)_{n\in \mathbb N}$ is a sequence of linear growth, then
\[
\lim_{N\to\infty} \frac{1}{N}\sum_{n=1}^N \Bigl|\int f_0\cdot T^{b_n}f_1 \cdot T^{2b_n}f_2 \, d\mu - \int \tilde{f}_0\cdot S^{b_n}\tilde{f}_1 \cdot S^{2b_n}\tilde{f}_2\, d\nu\Bigr|=0.
\]
\end{enumerate}
\end{lemma}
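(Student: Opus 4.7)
The plan is to let $\mb Y$ be the second Host--Kra pronilfactor $Z_2(\mb X)$ of $\mb X$; by the Host--Kra structure theorem this is an inverse limit of ergodic $2$-step nilsystems in the sense specified just before the statement, and it is a factor of $\mb X$ via some $\pi$.

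For Part (i), I would invoke Corollary 4.6 of \cite{BHK} directly, as the lemma itself suggests. That corollary, specialized to three-term averages, asserts that $Z_2$ is $L^1$-characteristic for the averages along $n,2n$: writing $a_n:=\int f_0\cdot T^n f_1\cdot T^{2n}f_2\,d\mu$ and $\tilde a_n:=\int \tilde f_0\cdot S^n \tilde f_1\cdot S^{2n}\tilde f_2\,d\nu$, it yields $\frac{1}{N}\sum_{n=1}^N|a_n-\tilde a_n|\to 0$, which is exactly Part (i). Under the hood, the mechanism is a telescoping argument that bounds $|a_n-\tilde a_n|$ by three similar quantities, in each of which one $f_i$ has been replaced by $f_i-P_{\mb Y}f_i$; each of those is in turn controlled by the Gowers--Host--Kra $U^3$-seminorm, which vanishes on $f_i-P_{\mb Y}f_i$ by the defining property of $Z_2$. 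The refinement in \cite{BHK} is that this seminorm bound controls the $L^1$-in-$n$ Cesaro average, not merely the Cesaro limit of the integrals.

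For Part (ii), I would deduce it from Part (i) by a one-line pigeonhole. Linear growth provides some $C>0$ with $b_n\leq Cn$ for all $n\in\mathbb N$; since the $b_n$ are distinct positive integers, $\{b_1,\dots,b_N\}\subseteq\{1,\dots,\lfloor CN\rfloor\}$, so with $\phi_m:=|a_m-\tilde a_m|$ we obtain
\[
\frac{1}{N}\sum_{n=1}^N \phi_{b_n}\leq\frac{1}{N}\sum_{m=1}^{\lfloor CN\rfloor}\phi_m \leq 2C\cdot\frac{1}{\lfloor CN\rfloor}\sum_{m=1}^{\lfloor CN\rfloor}\phi_m,
\]
and the right-hand side tends to $0$ by Part (i).

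The only real obstacle is the invocation of the characteristic-factor result in Part (i); everything else is formal. That invocation is routine in this literature but encapsulates the deep content of Host--Kra and Bergelson--Host--Kra structure theory, which is why the lemma simply cites \cite{BHK}.
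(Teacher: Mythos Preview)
Your proposal is correct and matches the paper's approach essentially line for line: the paper states that Part (i) is a special case of Corollary 4.6 of \cite{BHK} (with $\mb Y$ the Host--Kra factor that corollary provides), and then derives Part (ii) from Part (i) by exactly your pigeonhole argument, bounding $\frac{1}{N}\sum_{n=1}^N\phi_{b_n}$ by $\frac{b_N}{N}\cdot\frac{1}{b_N}\sum_{m=1}^{b_N}\phi_m$ and using that $b_N/N$ is bounded. Your additional commentary on the seminorm mechanism behind \cite{BHK} is accurate but not present in the paper.
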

To derive Part (ii) from Part (i),  note that
\begin{align*}
\Bigl|\frac{1}{N}\sum_{n=1}^N \int f\cdot T^{b_n}f \cdot T^{2b_n}f& \, d\mu - \int \tilde{f}\cdot S^{b_n}\tilde{f} \cdot S^{2b_n}\tilde{f}\, d\nu
\Bigr| \\ &\leq \frac{b_N}{N} \cdot \frac{1}{b_N}\sum_{n=1}^{b_N}\Bigl| \int f_0\cdot T^n f_1\cdot T^{2n}f_2 \, d\mu - \int \tilde{f}_0\cdot S^n \tilde{f}_1\cdot S^{2n}\tilde f_2\, d\nu\Bigr|\\
&\underset{N\to\infty}{\longrightarrow} 0,
\end{align*}
since $\frac{b_N}{N}$ is bounded.

We get the next result by combining the definition of ``inverse limit'' with the fact that for every ergodic nilsystem $(Y,\mathcal D,\nu,S)$ there is an $\ell \in \mathbb N$ such that the ergodic components of $(Y,\mathcal D,\nu,S^\ell)$ are totally ergodic; see  \cite[Proposition 2.1]{FrantzikinakisThreePoly} for justification.

\begin{lemma}\label{lem:TotallyErgodicFactor}
If $(X,\mathcal B,\mu,T)$ is an inverse limit of ergodic nilsystems, $f:X\to [0,1]$, and $\varepsilon>0$, there is a factor $\mb Y = (Y,\mathcal D,\nu,S)$ and $\ell \in \mathbb N$ such that
\begin{enumerate}
	\item[(i)] $\|f-P_{\mb Y}f\|_{L^1(\mu)}<\varepsilon$,
	\item[(ii)] the ergodic components of $(Y,\mathcal D,\nu,S^\ell)$ are totally ergodic.
\end{enumerate}
\end{lemma}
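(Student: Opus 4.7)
The plan is to combine the two ingredients hinted at just before the statement. First I would unpack the definition of ``inverse limit of ergodic nilsystems'' given in the excerpt: for any $f \in L^\infty(\mu)$ and any $\varepsilon' > 0$, there is a factor $\pi: \mb X \to \mb Z = (Z,\mathcal Z,\eta,R)$ where $\mb Z$ is an ergodic nilsystem and $\|f - P_{\mb Z} f\|_{L^1(\mu)} < \varepsilon'$. Applied with $\varepsilon' = \varepsilon$, this gives me a candidate factor; the only remaining issue is whether some power $R^\ell$ has totally ergodic ergodic components.

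Next I would invoke \cite[Proposition 2.1]{FrantzikinakisThreePoly}, cited in the excerpt, which states exactly that for every ergodic nilsystem $(Z,\mathcal Z,\eta,R)$ there exists $\ell \in \mathbb N$ such that the ergodic components of $(Z,\mathcal Z,\eta,R^\ell)$ are totally ergodic (roughly because an ergodic nilrotation has a finite-dimensional Kronecker factor which is a compact abelian Lie group, and one can pass to a power that kills the connected components not already in $R^\ell$). Setting $\mb Y := \mb Z$ and using this $\ell$, condition (ii) holds immediately.

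It remains to check (i). Since $\mb Y = \mb Z$, the projection $P_{\mb Y}$ coincides with $P_{\mb Z}$, and the inverse-limit choice above already gives $\|f - P_{\mb Y} f\|_{L^1(\mu)} < \varepsilon$. This completes the construction.

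There is essentially no obstacle in the argument itself: the proof is a direct concatenation of a definition-unwinding step and a black-box citation. The only subtle point worth flagging in the write-up is that one must select the nilsystem factor \emph{first} (using the inverse-limit structure to get the $L^1$-approximation) and only then choose $\ell$ depending on that factor; the order matters because the required $\ell$ depends on the particular ergodic nilsystem produced.
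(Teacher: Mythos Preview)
Your proposal is correct and follows essentially the same approach as the paper: the paper states that the lemma is obtained by combining the definition of ``inverse limit'' with \cite[Proposition 2.1]{FrantzikinakisThreePoly}, which is precisely your two-step argument of first choosing a nilsystem factor approximating $f$ in $L^1$ and then selecting $\ell$ for that factor.
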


\begin{notation}\label{not:Ycomponents} When $Y$ is the phase space of an ergodic nilsystem where $(Y,\mathcal D,\nu,S^\ell)$ is totally ergodic, we will enumerate its connected components as $Y_1,\dots,Y_M$, and write $\nu_i:=\frac{1}{M}\nu|_{Y_i}$.  Each $\mb Y_i:=(Y_i,\mathcal D_i,\nu_i,S^{\ell})$ is an ergodic component of $(Y,\mathcal D,\nu_Y,S^\ell)$.  If $\mb X$ is an extension of $\mb Y$ with factor map $\pi:X\to Y$, we let $X_i=\pi^{-1}(Y_i)$, $\mb \mu_i:= \frac{1}{M}\mu|_{X_i}$, $\mathcal B_i:=\{B\cap X_i:B\in \mathcal B\}$, and $\mb X_i=(X_i,\mathcal B_i,\mu_i,T^{\ell})$.  It is easy to verify that $\mb Y_i$ is a factor of $\mb X_i$ with factor map $\pi|_{X_i}$.
\end{notation}

\begin{remark}  Here we identify a technical difficulty common in multiple recurrence arguments.  Readers familiar with the use of Markov's inequality to overcome this difficulty may skip to the proof of Lemma \ref{lem:SqrtBHisrecurrent}.

 Our proof of Lemma \ref{lem:SqrtBHisrecurrent} starts with an ergodic, but not totally ergodic, MPS $\mb X=(X,\mathcal B,\mu,T)$.  By Lemma \ref{lem:NilCharacteristic} it suffices to prove the lemma in the special case where $\mb X$ is an inverse limit of ergodic nilsystems, so we assume $\mb X$ is such an inverse limit.  We then consider $f:X\to [0,1]$ with $\int f\, d\mu>\delta$.  The goal is to find an $\ell\in \mathbb N$ and a sequence $(b_n)$ of elements of $\sqrt{BH}/\ell$ satisfying inequality (\ref{eqn:PositiveIntegral}) below.  The main difficulty arises when trying to exploit the structure of nilsystems: Lemma \ref{lem:BHKroneckerCharacteristic} requires total ergodicity, so we fix $\varepsilon>0$ and choose a factor $\pi :\mb X\to \mb Y$ where $\mb Y$ is an ergodic nilsystem satisfying (i) and (ii) in Lemma \ref{lem:TotallyErgodicFactor}. We choose $\ell$ so that the ergodic components of $(Y,\mathcal D,\nu,S^\ell)$ are totally ergodic, and we enumerate these components as $\mb Y_i = (Y_i, \mathcal D_i, \nu_i,S^\ell)$, $i=1,\dots,M$.  With Notation \ref{not:Ycomponents} defined above, let $\tilde{f}\circ \pi=P_{\mb Y}f$ and $\tilde{f}_i=\tilde{f}|_{Y_i}$.  Lemma \ref{lem:BHKroneckerCharacteristic} allows us to choose, \emph{for each ergodic component $\mb Y_i$ where} $\int \tilde{f}_i\, d\nu_i>\delta/2$, a sequence $b_n^{(i)}\in \sqrt{BH}/\ell$ having linear growth, such that
\begin{equation}\label{eqn:GoodForZi}\lim_{N\to\infty} \frac{1}{N}\sum_{n=1}^N \int \tilde{f}_i\cdot S^{\ell b_n^{(i)}} \tilde{f}_i \cdot S^{2\ell b_n^{(i)}}\tilde{f}_i\, d\nu_i>c(\delta/2)/2.
\end{equation}  The choice of $b_{n}^{(i)}$ depends on $\mb Y_i$, so  (\ref{eqn:GoodForZi}) implies only that
\begin{equation}\label{eqn:oneOverM}
\lim_{N\to\infty} \frac{1}{N}\sum_{n=1}^N \int \tilde{f}\cdot S^{\ell b_n^{(i)}} \tilde{f} \cdot S^{2\ell b_n^{(i)}}\tilde{f}\, d\nu> \frac{1}{M}\frac{c(\delta/2)}{2}.
\end{equation}  If $M$ is large then $\|f-P_{\mb Y}f\|_{L^1(\mu)}$ may be large compared to $\frac{1}{M}\frac{c(\delta/2)}{2}$, and  (\ref{eqn:oneOverM}) will not immediately imply (\ref{eqn:PositiveIntegral}).  To overcome this obstacle, we want to find an $i$ where (\ref{eqn:GoodForZi}) holds and $\frac{1}{M}\|f_i-P_{\mb Y}f_i\|_{L^1(\mu)}$ is sufficiently small to make $\int \tilde{f}_i\cdot S^{\ell a}\tilde{f}_i \cdot S^{\ell b}\tilde{f}_i\, d\nu_i$ close to $\int f_i\cdot T^{\ell a}f_i\cdot T^{\ell b} f_i\, d\mu_i$ for all $a, b$.  Such an $i$ is provided by two straightforward applications of Markov's inequality outlined in \S\ref{sec:Markov}.
\end{remark}

Before proving Lemma \ref{lem:SqrtBHisrecurrent} we recall its statement:  for all $\delta>0$, there is a $k_0\in \mathbb N$ such that for every proper Bohr-Hamming ball $BH:=BH(\bm\beta,\bm y; k, \eta)$ with $k\geq k_0$, $\eta>0$, and $\mb y\in \mathbb T^r$, $\sqrt{BH}$ is $(\delta,2)$-recurrent.

\begin{proof}[Proof of Lemma \ref{lem:SqrtBHisrecurrent}, assuming Lemma \ref{lem:BHKroneckerCharacteristic}]  Let $\delta>0$, and choose $k_0\in \mathbb N$ so that for all $k\geq k_0$, inequality (\ref{eqn:RothAlongSqrtBH}) holds in Lemma \ref{lem:BHKroneckerCharacteristic}  with $c(\delta/2)$ in place of $c(\delta)$. Let $BH$ be a proper Bohr-Hamming ball with radius $(k,\eta)$ for some $\eta>0$. It suffices to prove that for every MPS $(X,\mathcal B,\mu,T)$ with $A\subseteq X$ having $\mu(A)>\delta$
	\begin{equation}\label{eqn:PositiveIntersection}
	\mu(A\cap T^{-n}A\cap T^{-2n}A) > 0 \qquad \text{for some } n\in \sqrt{BH}.
	\end{equation}
By Remark \ref{rem:Standard}, we need only consider \emph{ergodic} MPSs.  We will prove that if $\mb X$ is ergodic and $f:X\to [0,1]$ has $\int f\, d\mu>\delta$, then there is a sequence of elements $b_n\in \sqrt{BH}$ with linear growth such that
\begin{equation}\label{eqn:PositiveIntegral}
\liminf_{N\to\infty} \frac{1}{N}\sum_{n=1}^N \int f\cdot T^{b_n}f\cdot T^{2b_n}f\, d\mu>0.
\end{equation}
The special case of (\ref{eqn:PositiveIntegral}) where $f=1_A$ implies (\ref{eqn:PositiveIntersection}), as the integral then simplifies to $\mu(A\cap T^{-b_n}A\cap T^{-2b_n}A)$.

By Part (ii) of Lemma \ref{lem:NilCharacteristic}, it suffices to prove (\ref{eqn:PositiveIntegral}) when $\mb X=(X,\mathcal B,\mu,T)$ is an inverse limit of ergodic nilsystems.  We now fix such an $\mb X$, and $f:X\to [0,1]$ with $\int f\, d\mu>\delta$.

	Let $\varepsilon =  \frac{\delta}{24}c\bigl(\frac{\delta}{2}\bigr)$, and let  $\pi:\mb X\to \mb Y$ be the factor provided by Lemma \ref{lem:TotallyErgodicFactor} for this $\varepsilon$, with $\ell \in \mathbb N$ chosen so that the ergodic components $\mb Y_i$ of $(Y,\mathcal D,\nu,S^{\ell})$ are totally ergodic.  Let $M$ be the number of ergodic components,\footnote{We can take $\ell = M$, but we do not need this fact.} so that $\mu(Y_i)=1/M$ for each $i$.
	
	Let $X_i=\pi^{-1}(Y_i)$ and let $f_i=1_{X_i}f$, so that the $X_i$ partition $X$ into sets of measure $1/M$, and $\sum_i \int f_i \, d\mu = \int f \, d\mu >\delta$.  Observe that $P_{\mb Y}f_i$ is supported on $X_i$ and $\int P_{\mb Y}f_i\, d\mu = \int f_i\, d\mu$ for each $i$.
	
    Setting $\mb Y_{i}:=(Y_i,\mathcal D_i,\nu_i,S^{\ell})$, where $\nu_i:=M\nu|_{Y_i}$, we get that $\mb Y$ is a totally ergodic MPS.  Likewise $\mb X_{i}:= (X_i, \mathcal B_i, \mu_i,T^{\ell})$, with $\mu_i:=M\mu|_{X_i}$ is an MPS (possibly not ergodic), with $\pi|_{X_i}:X_i\to Y_i$ a factor map.  To prove (\ref{eqn:PositiveIntegral}), we will find a sequence $b_n$ of elements of $\sqrt{BH}/\ell $ having linear growth and $i\leq M$ with
    \begin{equation}\label{eqn:BH/lGood}
     \liminf_{N\to\infty} \frac{1}{N}\sum_{n=1}^N \int f_i\cdot T^{\ell b_n}f_i\cdot T^{2\ell b_n} f_i\, d\mu> 0.
    \end{equation}
	We claim that there is an $i$ such that \begin{align}
 	\label{eqn:AiLarge}	\int f_i\, d\mu &>  \frac{\delta}{2M}, \quad\text{ and }	\\
	\label{eqn:CloseOnXi} \|f_i - P_{\mb Y}f_i\|_{L^1(\mu)}&<\frac{c(\delta/2)}{12M}.
	\end{align}
	This $i$ is provided by Lemmas \ref{lem:Markov1} and \ref{lem:Markov2}: setting
\[I:=\Bigl\{i:\int f_i\, d\mu>\frac{\delta}{2M}\Bigr\}, \qquad J:=\Bigl\{i: \int |f_i - P_{\mb Y}f_i|\, d\mu < \frac{c(\delta/2)}{12M}\Bigr\},\] we get $|I| > M\delta/2$ and $|J|> M(1-\frac{\varepsilon}{c(\delta/2)/12})=M(1-\frac{\delta}{2})$.  Thus $|I|+|J|>M$, implying $I\cap J$ is nonempty.
	
	Fix $i$ satisfying (\ref{eqn:AiLarge}) and (\ref{eqn:CloseOnXi}). Note that (\ref{eqn:AiLarge}) and the definition of $\nu_i$, $\mu_i$, and $\tilde{f}_i$ imply
	\begin{equation}\label{eqn:fiLarge}
	\int \tilde{f}_i\, d\nu_i > \delta/2.
	\end{equation}
	Since $(Y_i,\mathcal B_i,\nu_i,S^{\ell})$ is totally ergodic, we may apply Lemma \ref{lem:BHKroneckerCharacteristic} to choose a sequence of elements $b_n\in \sqrt{BH}/\ell$ having linear growth and satisfying
	\begin{equation}\label{eqn:bForFactor}
		\lim_{N\to\infty} \frac{1}{N}\sum_{n=1}^N \int \tilde{f}_i\cdot S^{\ell b_n}\tilde{f}_i \cdot S^{2\ell b_n} \tilde{f}_i\, d\nu_i > c(\delta/2)/2.
	\end{equation}
Inequality  (\ref{eqn:CloseOnXi}), the bounds $\|f_i\|_{\infty}\leq 1$, $\|P_{\mb Y}f_{i}\|_{\infty}\leq 1$, and Lemma \ref{lem:MultiBound} imply
	\begin{equation}\label{eqn:DifferenceExpand}
		\Bigl|\int f_i\cdot T^{\ell a} f_i \cdot T^{\ell b} f_i\, d\mu_i - \int P_{\mb Y_i}f_i\cdot T^{\ell a} P_{\mb Y_i}f_i \cdot T^{\ell b} P_{\mb Y_i}f_i\, d\mu_i\Bigr|< \tfrac{1}{4}c(\delta/2).
	\end{equation}
	for each $a,b\in \mathbb N$. Recalling the definition of $\mu_i$ and $\nu_i$, we see that for all sufficiently large $N$,
\begin{align*}
		 \frac{1}{N} \sum_{n=1}^{N} \int f_i\cdot T^{\ell b_n} f_i \cdot T^{2\ell b_n} f_i\, d\mu
		&> \frac{1}{N}\sum_{n=1}^{N} \int \tilde{f}_i \cdot S^{\ell b_n} f_i \cdot S^{2\ell b_n} f_i\, d\nu - \frac{c(\delta/2)}{4M}  && \text{by }  (\ref{eqn:DifferenceExpand})\\
		&> 	\frac{c(\delta/2)}{2M} - \frac{c(\delta/2)}{4M} && \text{by } (\ref{eqn:bForFactor}) \\
		&= \frac{c(\delta/2)}{4M}.
\end{align*}
The above inequalities imply (\ref{eqn:BH/lGood}). Since $f\geq f_i$ pointwise and we chose $b_n\in \sqrt{BH}/\ell$, this implies (\ref{eqn:PositiveIntegral}) and completes the proof of Lemma \ref{lem:SqrtBHisrecurrent}. \end{proof}

 \section{Reformulation of Lemma \ref{lem:BHKroneckerCharacteristic}}\label{sec:Outline}

\subsection{Reformulation}\label{sec:Reformulation}

Lemma \ref{lem:BHKroneckerCharacteristic} is an immediate consequence of the following reformulation.  This version allows us to apply the theory of characteristic factors.

\begin{lemma}\label{lem:BHKroneckerAlternative}
Let $k<r\in \mathbb N$, $\ell\in \mathbb N$, let $\bm\beta\in \mathbb T^r$ be generating, and let $U\subseteq \mathbb T^r$ be an approximate Hamming ball of radius $(k,\eta)$ for some $\eta>0$. For every totally ergodic MPS $(X,\mathcal B,\mu,T)$, and every measurable $f:X\to [0,1]$, there is a cylinder function $g=\frac{1}{m(V)}1_V$ subordinate to $U$ such that
	\begin{equation}\label{eqn:RothApproxWithg}
		\lim_{N\to \infty} \Bigl|\frac{1}{N}\sum_{n=1}^N g(n^2\ell^2\bm\beta)\int f\cdot T^{n} f \cdot T^{2n} f\, d\mu - L_3(f,T)\Bigr|<2k^{-1/2}\|f\|^2.
	\end{equation}
\end{lemma}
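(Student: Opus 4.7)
I would analyze the weighted average on the left-hand side of (\ref{eqn:RothApproxWithg}) by expanding the cylinder function $g$ in its Fourier series on $\mathbb{T}^r$ and then handling each character's contribution via the theory of characteristic factors for polynomial-weighted multiple averages.

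First I would write $g=\sum_{\chi\in\widehat{\mathbb T^r}}\hat g(\chi)\chi$, so that the left-hand side of (\ref{eqn:RothApproxWithg}) becomes
\[
\sum_{\chi\in\widehat{\mathbb T^r}}\hat g(\chi)\cdot\frac{1}{N}\sum_{n=1}^N\chi(n^2\ell^2\bm\beta)\int f\cdot T^n f\cdot T^{2n} f\,d\mu.
\]
The trivial character contributes exactly $L_3(f,T)$ in the limit, since $\hat g(\mathbf 1)=\int g\,dm=1$ and $\chi(n^2\ell^2\bm\beta)\equiv 1$. For each nontrivial $\chi$, $\chi(n^2\ell^2\bm\beta)=e(n^2\theta_\chi)$ for some $\theta_\chi\in\mathbb T$ which is irrational, since $\bm\beta$ generates the connected group $\mathbb T^r$ and hence so does $\ell^2\bm\beta$. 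Thus each nontrivial term is a "$e(n^2\theta)$-twisted" 3-AP correlation sequence, and the target inequality reduces to controlling the aggregate contribution of these twisted averages to within $2k^{-1/2}\|f\|^2$.

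Next I would invoke the theory of characteristic factors to evaluate each nontrivial term. A quadratic exponential weight $e(n^2\theta)$ forces a 2-step nilfactor (rather than just the Kronecker factor) as the characteristic factor for these averages on a totally ergodic $\mb X$; for totally ergodic systems this 2-step nilfactor admits a concrete presentation as a standard 2-step affine Weyl system, which is the content of the reductions in \S\ref{sec:AffineReduction}. On such a Weyl system the twisted correlation $\frac{1}{N}\sum_n e(n^2\theta_\chi)\int f\cdot T^n f\cdot T^{2n} f\,d\mu$ can be computed in closed form by Fourier analysis on the Kronecker sub-factor combined with the explicit 2-step cocycle data, producing an expression that depends on $f$ only through the Fourier coefficients of a canonical auxiliary function $F\in L^2(\mathbb T^r)$ with $\|F\|\le\|f\|$, evaluated at characters translated by $\chi$.

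Finally, summing these closed-form expressions over $\chi\neq\mathbf 1$ and reorganizing recognizes the total error as essentially a convolution $F*g$ paired with $\bar F$ via Plancherel. At this point I would apply Lemma \ref{lem:VAnnihilates}(ii) to $F$, producing a cylinder function $g$ subordinate to $U$ with $|\widehat{F*g}(\chi)|<k^{-1/2}$ for every $\chi\in\widehat{\mathbb T^r}$; a Cauchy–Schwarz bound together with Plancherel then yields the desired estimate $2k^{-1/2}\|f\|^2$. The principal obstacle is the explicit closed-form computation on 2-step affine Weyl systems, which demands both (a) a tractable affine presentation of the 2-step nilfactor of a totally ergodic system, and (b) a careful multilinear Fourier expansion precise enough to exhibit the single auxiliary function $F\in L^2(\mathbb T^r)$ with $\|F\|\le\|f\|$ to which Lemma \ref{lem:VAnnihilates}(ii) can be applied uniformly in $\chi$. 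This explicit analysis is the business of the sections \S\ref{sec:Joinings}–\S\ref{sec:AffineLimits}.
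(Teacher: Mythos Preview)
Your overall shape is right—reduce to a standard $2$-step Weyl system, compute the weighted average there explicitly, and then choose the cylinder function $g$ to kill the large Fourier coefficients—but the core claim about the ``canonical auxiliary function $F\in L^2(\mathbb T^r)$ with $\|F\|\le\|f\|$'' is where the plan breaks down, and the paper's route differs from yours precisely at this point.

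On the Weyl system the function $f$ lives on $\mathbb T^d\times\mathbb T^d$, where $d$ is determined by the system and has no a priori relation to $r$. The weight $g(n^2\ell^2\bm\beta)$ lives on $\mathbb T^r$. The bridge between these two tori is not a single $L^2(\mathbb T^r)$ function but an \emph{affine joining} $\Gamma$ of $\mathbb T^d$ with $\mathbb T^r$ (encoding the joint equidistribution of $n^2\alpha$ and $n^2\ell^2\bm\beta$); this is exactly what \S\ref{sec:Joinings}--\S\ref{sec:AffineLimits} build. Proposition \ref{prop:WeakLimit} computes the limit of the full $g$-weighted average (not character by character) as a Roth-type integral whose third factor is $f*_\Gamma g$, a function on $\mathbb T^d\times\mathbb T^d$. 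The Fourier coefficients that must be made small are those of $f*_\Gamma g$ on $\widehat{\mathbb T^d\times\mathbb T^d}$, not coefficients of anything on $\widehat{\mathbb T^r}$; Lemma \ref{lem:VAnnihilates}(ii) is therefore insufficient, and the paper instead uses its joining-adapted generalization, Lemma \ref{lem:BHconvolvesToUniform}, followed by the Roth estimate Lemma \ref{lem:SmallFourierToW}. There is no single $F\in L^2(\mathbb T^r)$ whose coefficients capture the contributions: for characters $\chi$ with $\theta_\chi$ rationally independent of the Weyl frequency $\alpha$ the twisted limit vanishes (Lemma \ref{lem:ReductionToAffine}), and for the others the value depends on the specific rational relation, so the map $\chi\mapsto(\text{limit})$ does not arise as a Fourier transform on $\mathbb T^r$.

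Two further points. First, your character-by-character expansion of $g=\tfrac{1}{m(V)}1_V$ is not absolutely convergent, so swapping $\sum_\chi$ with $\lim_N$ needs justification; the paper sidesteps this by treating $g$ as a Riemann integrable function and computing the limit directly. Second, the passage from a general totally ergodic system to a Weyl system is not exact: Lemma \ref{lem:ReductionToWeyl} produces a factor approximating the averages only up to a prescribed $\varepsilon$, which is set to $\tfrac12 k^{-1/2}\|f\|^2$ and accounts for half of the final bound $2k^{-1/2}\|f\|^2$. Your outline treats that reduction as lossless.
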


While $U$ does not depend on $f$ in Lemma \ref{lem:BHKroneckerAlternative}, the choice of $g$ to satisfy (\ref{eqn:RothApproxWithg}) does depend on $f$.

We prove Lemma \ref{lem:BHKroneckerAlternative} in \S\ref{sec:MainProof}. The derivation of Lemma \ref{lem:BHKroneckerCharacteristic} from Lemma \ref{lem:BHKroneckerAlternative} is an instance of the following general principle: if $a_n$ is a bounded sequence, $B\subseteq \mathbb N$ is enumerated as $\{b_1<b_2<\dots\}$, and $d(B):=\lim_{N\to\infty} \frac{|B\cap \{1,\dots,N\}|}{N}>0$, then
\[
\lim_{N\to\infty} \frac{1}{N}\sum_{n=1}^N a_{b_n} = \lim_{N\to\infty} \frac{1}{Nd(B)}\sum_{n=1}^N1_{B}(n)a_n
\]
provided the limit on the right exists.  Note that $(b_n)_{n\in \mathbb N}$ has linear growth if $d(B)>0$.

We will apply this principle with $a_n = \int f\cdot T^{n} f \cdot T^{2n} f\, d\mu$ and $B=\{n:n^2\ell^2\bm\beta\in V\}$, where $V$ is a cylinder contained in $U$.  Then $g=\frac{1}{m(V)}1_V$ is a cylinder function subordinate to $U$, and $g(n^2\ell^2\bm\beta)=\frac{1}{d(B)}1_B(n)$.  The equation $d(B)=m(V)$ follows from Weyl's theorem on uniform distribution (cf.~\S\ref{sec:Weyl}).  Note that this $B$ is contained in $\sqrt{BH}/\ell$, where $BH$ is the Bohr-Hamming ball corresponding to $U$, with frequency $\bm\beta$.

\begin{remark}
 The exact form of the bound in (\ref{eqn:RothApproxWithg}) is not important in the sequel.  The only relevant property is that the coefficient of $\|f\|^2$ tends to $0$ as $k\to \infty$.
\end{remark}

\subsection{Outline of a special case of Lemma \ref{lem:BHKroneckerAlternative}}\label{sec:OutlineSpecial}  This outline highlights  the key steps in our proof while avoiding some complications.

We begin with an arbitrary \emph{totally ergodic} measure preserving system $\mb X=(X,\mathcal B,\mu,T)$,  $f: X\to [0,1]$, and $k\in \mathbb N$. We let $r>k$, $\eta>0$, and  fix an approximate Hamming ball $U=\Hamm(\mb y;k,\eta)\subseteq \mathbb T^r$ and a generator $\bm\beta \in \mathbb T^r$.  We want to find  a cylinder function $g$ subordinate to $U$ so that
\begin{equation}\label{eqn:DefAN}
A_N(f,g):= \frac{1}{N}\sum_{n=1}^N g(n^2\bm\beta)\int f\cdot T^nf\cdot T^{2n}f\, d\mu
\end{equation}
satisfies $\lim_{N\to\infty} |A_N(f,g)-L_3(f,T)|<2k^{-1/2}\|f\|^2$.

In \S\ref{sec:Eigenvalues}-\S\ref{sec:AffineReduction} we will reduce to the case where $\mb X$ is a \emph{standard 2-step Weyl system}.  This means that $(X,\mathcal B,\mu ,T)$ can be realized with $X=\mathbb T^d\times \mathbb T^d$,  $d\in \mathbb N$, $\mu=$ Haar probability measure on $\mathbb T^d\times\mathbb T^d$, and $T$ is given by $T(x, y)=( x+\bm\alpha,y+ x)$, for some generator $\bm\alpha\in\mathbb T^d$.  The orbits of $T$ can be computed explicitly: $T^n(x,y)=(x+n\bm\alpha, y+nx+\tbinom{n}{2}\bm\alpha)$.  This reduction relies on the theory of characteristic factors, especially Theorem B of \cite{FrantzikinakisThreePoly}.

To simplify this outline, we assume $r=d$ and $\bm\beta = \bm \alpha$.  We write functions on $\mathbb T^d\times \mathbb T^d$ with variables displayed as $f(x,y)$, where $x, y\in \mathbb T^d$.  Writing $m\times m$ for Haar probability measure on $\mathbb T^d\times \mathbb T^d$,  we write $\int f\, dm\times m$  as $\int f(x,y)\, dx\, dy$, or $\int f\binom{x}{y}\, dx\, dy$ to save space.  With these assumptions, the averages in (\ref{eqn:DefAN}) become
\[
B_N
:= \frac{1}{N}\sum_{n=1}^N g(n^2\bm\alpha) \int_{\mathbb T^d\times \mathbb T^d} f\colvec{x\\y}f\colvec{x+n\bm\alpha\\y+nx+\tbinom{n}{2}\bm\alpha}f\colvec{x+2n\bm\alpha \\ y+2nx + \tbinom{2n}{2}\bm\alpha}\, dx\, dy.
\]
Proposition \ref{prop:WeakLimit} provides an explicit formula for $\lim_{N\to\infty} B_N$.  Under the present assumptions, it says
\begin{equation}\label{eqn:Blim}
	\lim_{N\to\infty} B_N = \int_{(\mathbb T^d)^4} f(x,y)f(x+s,y+t) \Bigl(\int_{\mathbb T^d} f(x+2s,y+2t+2w)g(w) \, dw\Bigr) \, ds\, dt \, dx\, dy.
\end{equation}
Write $I$ for the right hand side above, and define
\[
f*_2 g(x,y):= \int f(x,y+2w) g(w)\, dw.
\]
Using Lemma \ref{lem:BHconvolvesToUniform} (a generalization of Lemma \ref{lem:VAnnihilates}), we choose a cylinder function $g$ subordinate to $U$ such that $|\widehat{f*_2 g}(\chi,\psi)|<k^{-1/2}$ for all $(\chi,\psi)\in \widehat{\mathbb T}^d\times \widehat{\mathbb T}^d$ with $\psi$ nontrivial.
We set $f'(x):=\int f(x,y)\, dy$ and $J':= \int \int f'(x)f'(x+s)f'(x+2s)\, dx\, ds$.  By Lemma \ref{lem:SmallFourierToW}, the bound on $\widehat{f*_2 g}$ will imply
\begin{equation}\label{eqn:Clim}
	|I - J'| < k^{-1/2}\|f\|^2.
\end{equation}
We can also prove (directly, or using Theorem \ref{thm:ErgodicRoth}) that $L_3(f,T)=J'$.  Combining (\ref{eqn:Clim}) with (\ref{eqn:Blim}), we then have (\ref{eqn:RothApproxWithg}),  completing the outline of this special case. The factor $2$ on the right-hand side of (\ref{eqn:RothApproxWithg}) accounts for the reduction to Weyl systems.

In the general case we must compute $\lim_{N\to\infty} A_N(f,g)$ for $d\neq r$ and $\bm\beta\neq\bm\alpha$.  The integral $\int f(x+2s,y+2t +2w) g(w)\, dw$ in (\ref{eqn:Blim}) will then be replaced by an integral over an \emph{affine joining} of $\mathbb T^d$ with $\mathbb T^r$ (Definition \ref{def:AffineJoining}), but the computation in this case is not substantially different from the outline above.

\subsection{Iterated integral notation}  When all variables are displayed and there is no chance of confusion, we may omit all but one of the integral signs and the subscripts indicating the domain of integration.  So the integral in (\ref{eqn:Blim}) may be written as
\[
\int f(x,y)f(x+s,y+t) f(x+2s,y+2t+2w)g(w) \, dw\, ds \, dt\, dx\, dy.
\]

\section{Eigenvalues and ergodicity of products}\label{sec:Eigenvalues}

An \emph{eigenfunction} of an MPS $\mb X=(X,\mathcal B,\mu,T)$ with \emph{eigenvalue} $\lambda\in \mathbb C$ is an $f\in L^2(\mu)$ satisfying $\|f\|\neq 0$ and $f\circ T=\lambda f$.  Since $\int |f\circ T|\, d\mu = \int |f|\, d\mu$, we have $|\lambda|=1$.  We then have that $|f\circ T|$ is $T$-invariant,  so if $\mb X$ is ergodic, we get that $|f|$ is equal $\mu$-almost everywhere to a constant.  We say an eigenvalue $\lambda$ of $\mb X$ is \emph{nontrivial} if $\lambda\neq 1$.  Note that the eigenfunctions of $\mb X$ are the eigenvectors of the unitary operator $U_T:L^2(\mu)\to L^2(\mu)$ defined by $U_T f = f\circ T$.

Given two MPSs $\mb X = (X,\mathcal B,\mu,T)$ and $\mb Y = (Y,\mathcal D,\nu,S)$, we form the product system $\mb X\times \mb Y=(X\times Y, \mathcal B\otimes \mathcal D,\mu \times \nu, T\times S)$.  For $f\in L^2(\mu)$ and $g\in L^2(\nu)$, we write $f\otimes g$ for the function defined by $f\otimes g(x,y)=f(x)g(y)$.

We need some standard consequences of the following, which is the specialization of Lemma 4.17  of \cite{FurstenbergBook} (p.~91) to the case where $\mathcal H=L^2(\mu)$, $\mathcal H'=L^2(\nu)$ for MPSs $\mb X$ and $\mb Y$ as above, with unitary operators $Uf:=f\circ T$ and $U'g:=g\circ S$.

\begin{lemma}\label{lem:TensorEigenvectors}  Let $\mb X$ and $\mb Y$ be measure preserving systems as above, and let $\mb X\times \mb Y$ be the product system.  Let $h\in L^2(\mu\times \nu)$ be an eigenfunction of $\mb X\times \mb Y$ with eigenvalue $\lambda$, meaning $h\circ (T\times S)=\lambda h$.  Then $h = \sum c_n f_n\otimes g_n$, where $f_n\circ T=\lambda_nf_n$, $g_n\circ S = \lambda_n'g_n$,  $\lambda_n\lambda_n' = \lambda$, and the sequences $\{f_n\}$, $\{g_n\}$ are orthonormal in $L^2(\mu)$ and $L^2(\nu)$ respectively.
\end{lemma}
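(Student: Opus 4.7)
The plan is to derive the decomposition from the spectral theorem for unitary operators applied to the Koopman operators $U_T$, $U_S$, and their tensor product $U_T \otimes U_S = U_{T \times S}$ on $L^2(\mu \times \nu)$. Write $L^2(\mu) = H_{pp}(T) \oplus H_c(T)$, where $H_{pp}(T)$ is the closed span of the eigenfunctions of $U_T$ and $H_c(T)$ is its orthogonal complement; define $H_{pp}(S)$, $H_c(S)$ analogously. Each of these four subspaces is invariant under the corresponding Koopman operator, so $L^2(\mu \times \nu)$ decomposes as an orthogonal sum of four $U_{T \times S}$-invariant pieces.

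The central step is to show that any eigenvector $h$ of $U_{T \times S}$ must actually lie in $H_{pp}(T) \otimes H_{pp}(S)$. The standard tool is the identity that the spectral measure of $U_T \otimes U_S$ at a product vector $\xi \otimes \eta$ is the multiplicative convolution on $\mathcal S^1$ of the spectral measures of $U_T$ at $\xi$ and $U_S$ at $\eta$; since convolution with a continuous Borel measure on $\mathcal S^1$ remains continuous, $U_{T \times S}$ has purely continuous spectrum on $H_c(T) \otimes L^2(\nu)$ and (by symmetry) on $L^2(\mu) \otimes H_c(S)$, so neither of these subspaces contains any eigenvector. This is the abstract content of Furstenberg's Lemma 4.17 in the cited form, and I would invoke it directly rather than redo the spectral computation.

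Once $h \in H_{pp}(T) \otimes H_{pp}(S)$, enumerate the distinct eigenvalues $\{\alpha_k\}$ of $U_T$ and $\{\beta_\ell\}$ of $U_S$, and let $P_k$, $Q_\ell$ denote the orthogonal projections onto $\ker(U_T - \alpha_k)$ and $\ker(U_S - \beta_\ell)$. Each $P_k \otimes Q_\ell$ commutes with $U_{T \times S}$, which acts on its range as multiplication by $\alpha_k \beta_\ell$; consequently $h_{k,\ell} := (P_k \otimes Q_\ell) h$ is nonzero only when $\alpha_k \beta_\ell = \lambda$, and $h = \sum_{\alpha_k \beta_\ell = \lambda} h_{k,\ell}$. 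Applying the Schmidt (singular value) decomposition inside each tensor product $\ker(U_T - \alpha_k) \otimes \ker(U_S - \lambda/\alpha_k)$ writes $h_{k,\lambda/\alpha_k} = \sum_i c_i^{(k)} f_i^{(k)} \otimes g_i^{(k)}$ with $\{f_i^{(k)}\}$ and $\{g_i^{(k)}\}$ orthonormal in their respective eigenspaces. Since eigenvectors for distinct eigenvalues of a unitary operator are orthogonal, concatenating these expansions over all $k$ yields the stated $h = \sum_n c_n f_n \otimes g_n$ with $\lambda_n \lambda_n' = \lambda$ and orthonormal sequences $\{f_n\}$, $\{g_n\}$. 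The only nontrivial input is the spectral/convolution step ruling out continuous spectrum contributions; the remainder is bookkeeping inside each finite- or countably-indexed eigenspace.
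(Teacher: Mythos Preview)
Your argument is correct and follows essentially the same route as the paper: both reduce the claim to Furstenberg's Lemma 4.17 via the identification $L^2(\mu\times\nu)\cong L^2(\mu)\otimes L^2(\nu)$, $U_{T\times S}=U_T\otimes U_S$. The paper simply cites that lemma without further comment, whereas you additionally sketch its content (the spectral-convolution step ruling out continuous spectrum, followed by a Schmidt decomposition inside each eigenspace pair); this extra detail is accurate but not a genuinely different approach.
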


To deduce Lemma \ref{lem:TensorEigenvectors} from Lemma 4.17 of \cite{FurstenbergBook}, note that if $\mu$ and $\nu$ are measure spaces, $L^2(\mu\times \nu)$ is isomorphic to the tensor product $L^2(\mu)\otimes L^2(\nu)$, and the obvious isomorphism identifies $U_{T\times S}$ with $U_T\otimes U_S$.

The next lemma is a well known consequence of Lemma \ref{lem:TensorEigenvectors}; we omit its proof.

\begin{lemma}\label{lem:ProductErgodic}
	If $\mb X$ and $\mb Y$ are ergodic MPSs, the product system $\mb X\times \mb Y$ is ergodic if and only if $\mb X$ and $\mb Y$ have no nontrivial eigenvalues in common.	
\end{lemma}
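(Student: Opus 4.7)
The plan is to prove both directions by contrapositive, exploiting Lemma \ref{lem:TensorEigenvectors} to pull apart eigenfunctions of $\mb X \times \mb Y$ into tensor components.

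For the \emph{``only if''} direction, I would assume $\mb X$ and $\mb Y$ share some nontrivial eigenvalue $\lambda \neq 1$, with eigenfunctions $f \in L^2(\mu)$ and $g \in L^2(\nu)$ satisfying $f \circ T = \lambda f$ and $g \circ S = \lambda g$. Since $|\lambda|=1$, the function $h := f \otimes \bar g$ satisfies $h \circ (T \times S) = \lambda \bar\lambda\, h = h$, so it is invariant under $T\times S$. To see $\mb X \times \mb Y$ is not ergodic, it suffices to show $h$ is not $\mu\times\nu$-a.e.~constant. Since $\int f\, d\mu = \int (f\circ T)\, d\mu = \lambda \int f\, d\mu$ with $\lambda\neq 1$, we have $\int f\, d\mu = 0$, and similarly $\int \bar g\, d\nu = 0$, hence $\int h\, d(\mu\times\nu) = 0$; but $\|h\|_{L^2(\mu\times\nu)} = \|f\| \|g\| > 0$, so $h$ is not a.e.~zero and thus not a.e.~constant.

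For the \emph{``if''} direction, assume $\mb X \times \mb Y$ is not ergodic. Then there is an $(T\times S)$-invariant $h \in L^2(\mu \times \nu)$ that is not a.e.~constant; subtracting its mean, assume $\int h\, d(\mu\times\nu) = 0$ and $h \neq 0$. Apply Lemma \ref{lem:TensorEigenvectors} with eigenvalue $\lambda = 1$ to write
\[
h = \sum_n c_n\, f_n \otimes g_n, \qquad f_n\circ T = \lambda_n f_n, \quad g_n\circ S = \lambda_n' g_n, \quad \lambda_n \lambda_n' = 1,
\]
with $\{f_n\}$ orthonormal in $L^2(\mu)$ and $\{g_n\}$ orthonormal in $L^2(\nu)$. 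Pick any $n$ with $c_n \neq 0$. I claim $f_n$ cannot be a.e.~constant: if it were, then $\lambda_n = 1$, forcing $\lambda_n' = 1$, so $g_n$ would be $S$-invariant and, by ergodicity of $\mb Y$, a.e.~constant; but then $f_n \otimes g_n$ is a nonzero constant and contributes a nonzero amount to $\int h = 0$, and by orthonormality no other term can cancel it, contradiction. Hence $f_n$ is non-constant; by ergodicity of $\mb X$ this forces $\lambda_n \neq 1$. Now $\bar g_n$ is a nonzero eigenfunction of $\mb Y$ with eigenvalue $\overline{\lambda_n'} = \overline{\lambda_n^{-1}} = \lambda_n$ (using $|\lambda_n| = 1$), so $\lambda_n$ is a nontrivial eigenvalue shared by $\mb X$ and $\mb Y$.

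The only mildly delicate step is the argument that some term in the tensor decomposition has $f_n$ non-constant; the rest is direct computation. Everything else follows by applying Lemma \ref{lem:TensorEigenvectors} and the standard observation that nontrivial eigenfunctions of ergodic systems have zero integral.
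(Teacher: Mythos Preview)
Your proof is correct. The paper itself omits the proof, stating only that the lemma is a well-known consequence of Lemma~\ref{lem:TensorEigenvectors}; your argument carries out exactly that deduction, using the tensor decomposition for the harder (``if'') direction and a direct construction of an invariant function for the other.
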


Another immediate consequence of Lemma \ref{lem:TensorEigenvectors} is the following.

\begin{lemma}\label{lem:WM}  If $\mb X=(X,\mathcal B,\mu,T)$ and $\mb Y=(Y,\mathcal D,\nu,S)$ are MPSs such that $\mb X\times \mb Y$ is ergodic and $g\in L^2(\nu)$ is orthogonal to every eigenfunction of $\mb Y$, then for every $f\in L^2(\mu)$, $f\otimes g$ is orthogonal to every eigenfunction of $\mb X\times \mb Y$.
\end{lemma}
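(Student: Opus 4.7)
The plan is to apply Lemma \ref{lem:TensorEigenvectors} directly and then compute an inner product term by term. First I would let $h\in L^2(\mu\times\nu)$ be an arbitrary eigenfunction of $\mb X\times\mb Y$, say with eigenvalue $\lambda$, and invoke Lemma \ref{lem:TensorEigenvectors} to expand
\[
h=\sum_n c_n\, f_n\otimes g_n,
\]
where $f_n\circ T=\lambda_n f_n$, $g_n\circ S=\lambda_n' g_n$, $\lambda_n\lambda_n'=\lambda$, and $\{f_n\}$, $\{g_n\}$ are orthonormal in $L^2(\mu)$ and $L^2(\nu)$ respectively. The orthonormality in particular means each $g_n$ is a nonzero element of $L^2(\nu)$ satisfying $g_n\circ S=\lambda_n' g_n$, so it qualifies as an eigenfunction of $\mb Y$ in the sense defined in this section.

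Next, for any $f\in L^2(\mu)$, I would pull the sum outside the inner product using continuity of $\langle\cdot,\cdot\rangle$ together with $L^2$-convergence of the orthonormal expansion of $h$, and then apply Fubini on each term to factor the inner product over the product space:
\[
\langle f\otimes g, h\rangle_{L^2(\mu\times\nu)}=\sum_n \overline{c_n}\,\langle f, f_n\rangle_{L^2(\mu)}\,\langle g, g_n\rangle_{L^2(\nu)}.
\]
By hypothesis $g$ is orthogonal to every eigenfunction of $\mb Y$, so $\langle g,g_n\rangle_{L^2(\nu)}=0$ for every $n$, and the whole sum vanishes. Since $h$ was an arbitrary eigenfunction of $\mb X\times\mb Y$, this gives the conclusion.

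There is no real obstacle here: the argument is essentially a one-line consequence of Lemma \ref{lem:TensorEigenvectors}, and the only small technical point is justifying the interchange of summation and inner product, which is immediate from $L^2$-convergence of $\sum_n c_n\, f_n\otimes g_n$ together with $\|f\otimes g\|_{L^2(\mu\times\nu)}=\|f\|\,\|g\|<\infty$. Notably the ergodicity hypothesis on $\mb X\times\mb Y$ does not seem to enter this particular deduction; I take it to be listed in the statement because the intended applications verify ergodicity via Lemma \ref{lem:ProductErgodic} before quoting Lemma \ref{lem:WM}.
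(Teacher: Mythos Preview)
Your proposal is correct and matches the paper's approach: the paper states Lemma \ref{lem:WM} as ``another immediate consequence of Lemma \ref{lem:TensorEigenvectors}'' without further proof, and your argument spells out exactly that deduction. Your remark that the ergodicity hypothesis on $\mb X\times\mb Y$ plays no role in this particular step is also accurate.
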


\section{Eigenfunctions and the Kronecker factor}\label{sec:Kronecker}

Every ergodic MPS $\mb X$ has a factor  $\pi:\mb X\to \mb Z$ where $\mb Z=(Z,\mathcal Z,m,R)$ is a compact abelian group rotation such that every eigenfunction of $\mb X$ is $\pi^{-1}(\mathcal Z)$-measurable.  This factor is called the \emph{Kronecker factor} of $\mb X$, and we write $\int_Z f(s)\, ds$ (or sometimes just $\int f(s)\,ds$) to abbreviate $\int f(s)\, dm_Z(s)$.

The following result is proved in \S 3 of \cite{F77}; we use the notation $L_3$ introduced in \S\ref{sec:MultipleErgodic}.

\begin{theorem}\label{thm:ErgodicRoth}  If $\mb X=(X,\mathcal B,\mu,T)$ is an ergodic MPS with Kronecker factor $\pi:\mb X\to \mb Z$, $\mb Z=(Z,\mathcal Z,m,R)$, and $f_i:X\to [0,1]$, then
	\[\lim_{N\to\infty} \frac{1}{N}\sum_{n=1}^N f_1(T^n x) f_2(T^{2n} x) = \int_Z \tilde{f}_1(\pi(x)+s)\tilde{f}_2(\pi(x)+2s)\, ds,  \qquad (\text{in} \ L^2(\mu))\]
	where $\tilde{f}_i\in L^\infty(m)$ satisfies $\tilde{f}_i\circ \pi=P_{\mb Z}f_i$.  Furthermore
	\begin{equation}\label{eqn:RothIntegralFormula}
		L_3(f,T) = \int_Z\int_Z \tilde{f}(z)\tilde{f}(z+s)\tilde{f}(z+2s)\, dz\, ds \quad \text{for all } f\in L^\infty(\mu).
	\end{equation}
	
\end{theorem}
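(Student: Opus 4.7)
The plan is to follow Furstenberg's approach: identify the Kronecker factor as a \emph{characteristic factor} for the length-3 averages, then compute the limit explicitly on the group rotation $\mb Z$.

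First, I would show that $\mb Z$ is characteristic: if $P_{\mb Z}f_1 = 0$ or $P_{\mb Z}f_2 = 0$, then $\frac{1}{N}\sum_{n=1}^{N} T^n f_1\cdot T^{2n}f_2 \to 0$ in $L^2(\mu)$. To see this, apply van der Corput's inequality to $u_n := T^n f_1\cdot T^{2n} f_2$; translating by $T^{-n}$ (which preserves integrals) gives
\[
\langle u_{n+h}, u_n\rangle = \int (T^h f_1\cdot \bar f_1)\cdot T^n(T^{2h}f_2\cdot \bar f_2)\, d\mu,
\]
and the mean ergodic theorem for $T$ yields $\lim_N \tfrac{1}{N}\sum_n\langle u_{n+h},u_n\rangle = \langle T^h f_1, f_1\rangle\cdot\langle T^{2h}f_2,f_2\rangle$. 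Functions orthogonal to $L^2(\mb Z)$ have continuous spectral measures, so by Wiener's lemma their autocorrelations are square-Ces\`aro-null; Cauchy--Schwarz then forces the above product to Ces\`aro-vanish in $h$, completing the reduction via $f_i = P_{\mb Z}f_i + (f_i - P_{\mb Z}f_i)$.

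Next, I would replace $f_i$ by $P_{\mb Z}f_i = \tilde f_i\circ \pi$ and transport the average to $\mb Z = (Z,\mathcal Z,m,R_\alpha)$ with generator $\alpha$:
\[
\frac{1}{N}\sum_{n=1}^N \tilde f_1(\pi(x)+n\alpha)\tilde f_2(\pi(x)+2n\alpha).
\]
By Weyl's theorem, $(n\alpha, 2n\alpha)$ equidistributes along the closed subgroup $H:=\{(s,2s):s\in Z\}$ of $Z\times Z$, whose Haar measure is the pushforward of $m$ under $s\mapsto (s,2s)$. A density argument approximating $\tilde f_i$ in $L^2(m)$ by continuous functions (on which Weyl equidistribution is uniform) yields the stated $L^2(\mu)$ limit $\int_Z \tilde f_1(\pi(x)+s)\tilde f_2(\pi(x)+2s)\, ds$.

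Finally, for (\ref{eqn:RothIntegralFormula}), I would set $f_1=f_2=f$ and pair the $L^2$ limit with $f$: writing $\Phi(x) := \int_Z \tilde f(\pi(x)+s)\tilde f(\pi(x)+2s)\, ds$, which is $\mb Z$-measurable, we have $\int f\Phi\, d\mu = \int P_{\mb Z}f\cdot \Phi\, d\mu = \int_Z\!\int_Z \tilde f(z)\tilde f(z+s)\tilde f(z+2s)\, ds\, dz$. The main obstacle is the spectral step in the first paragraph: verifying that Kronecker-orthogonality of one factor forces Ces\`aro vanishing of $\langle T^h f_1,f_1\rangle\langle T^{2h} f_2,f_2\rangle$, which requires combining Wiener's lemma with Cauchy--Schwarz to handle the constrained sampling $(h,2h)$ rather than two independent parameters.
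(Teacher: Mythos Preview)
Your argument is correct and is essentially Furstenberg's original proof from \cite{F77}. The paper does not give its own proof of this theorem; it simply states that the result ``is proved in \S 3 of \cite{F77},'' so your sketch is precisely a reconstruction of the cited argument (van der Corput plus the spectral/Wiener step to show the Kronecker factor is characteristic, followed by equidistribution on the compact group rotation).

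One small remark on presentation: in the equidistribution step you invoke ``Weyl's theorem,'' but what you actually use is just that $n\alpha$ equidistributes in $Z$ (unique ergodicity of the rotation, since $\alpha$ generates $Z$), pushed forward through $s\mapsto (s,2s)$; no polynomial equidistribution is needed here. Also, the ``main obstacle'' you flag is handled exactly as you indicate: if, say, $P_{\mb Z}f_2=0$, then $\tfrac{1}{H}\sum_{h\le H}|\langle T^{2h}f_2,f_2\rangle|^2 \le \tfrac{2}{2H}\sum_{h\le 2H}|\langle T^{h}f_2,f_2\rangle|^2\to 0$ by Wiener, and Cauchy--Schwarz with the bounded sequence $\langle T^h f_1,f_1\rangle$ finishes the van der Corput estimate.
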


\subsection{Kronecker factor of a standard 2-step Weyl system}\label{sec:WeylKronecker} 
A \emph{standard $2$-step Weyl system} is an MPS of the form $\mb Y = (Y, \mathcal B, m,S)$, where $Y=\mathbb T^d\times \mathbb T^d$, $d\in \mathbb N$, and $S:Y\to Y$ is defined as $S(x,y)=(x+\alpha,y+x)$, for some fixed $\alpha=(\alpha_1,\dots,\alpha_d)$ generating $\mathbb T^d$.  There is an explicit formula for the orbits of $S$:
\begin{equation}\label{eqn:2stepWeylOrbit}
	S^n(x,y)=(x+n\alpha, y + nx + \tbinom{n}{2}\alpha),
\end{equation}
 which may be verified by induction.  Ergodicity of $\mb Y$ is equivalent to $\mb\alpha$ generating $\mathbb T^d$. For $d=1$ this follows from  Proposition 3.11 of \cite{FurstenbergBook} (p.~67), and the general case follows from a nearly identical proof.  Also explained in \cite{FurstenbergBook} is the Kronecker factor of $\mb Y$: the eigenfunctions of $\mb Y$ are exactly the functions $\chi$ on $Y$ defined by \[
\chi((x_1,\dots,x_d),(y_1,\dots,y_d)):=\exp(2\pi i (n_1x_1+\cdots + n_dx_d))
\] for some $n_j\in\mathbb Z$, so the group of eigenvalues of $\mb Y$ is $\{\exp(2\pi i  (n_1\alpha_1+\cdots +n_d\alpha_d)) : n_j\in \mathbb Z\}$.    
Thus the Kronecker factor of $\mb Y$ is obtained by setting $Z=\mathbb T^d$ and letting  $\pi:\mathbb T^d\times \mathbb T^d \to \mathbb T^d$ be projection onto the first coordinate.  Since the span of the eigenfunctions of $\mb Y$ consists solely of those functions depending on the first coordinate, the orthogonal projection $P_{\mb Z}f(x,y)$ can be written as $(P_{\mb Z}f)(x,y):=\int f(x,y)\, dy$.  Combining this with Theorem \ref{thm:ErgodicRoth}, we have the following.

\begin{observation}\label{obs:KroneckerWeyl}
	The Kronecker factor $(Z,\mathcal Z,m,R)$ of a standard 2-step Weyl system $(\mathbb T^d \times \mathbb T^d,\mathcal D,\mu,S)$ is spanned by functions of the form $f(x,y)=g(x)$ (i.e.~functions depending on only the first coordinate), and for all bounded $f:\mathbb T^d\times \mathbb T^d\to \mathbb C$, we have
	\[
	\lim_{N\to\infty} \frac{1}{N}\sum_{n=1}^N \int f\cdot S^n f\cdot S^{2n}f \, d\mu = \int_{\mathbb T^d}\int_{\mathbb T^d} f'(x)f'(x+s)f'(x+2s)\, dx\, ds,
	\]
where $f':\mathbb T^d\to \mathbb C$ is defined as $f'(x):=\int f(x,y)\, dy$.
\end{observation}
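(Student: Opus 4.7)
The plan is to combine the explicit description of eigenfunctions of $\mb Y$ sketched in the paragraph preceding the statement with the integral formula (\ref{eqn:RothIntegralFormula}) from Theorem \ref{thm:ErgodicRoth}. The first task is to verify that the eigenfunctions of $\mb Y$ are precisely the characters depending only on the first coordinate; the second task is then a direct substitution.

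For the eigenfunction computation I would proceed by Fourier analysis on $\mathbb T^d\times\mathbb T^d$. Any $f\in L^2(\mu)$ admits an expansion $f(x,y)=\sum_{(m,n)\in \mathbb Z^d\times \mathbb Z^d} c_{m,n}\, e(m\cdot x + n\cdot y)$, where $e(t)=\exp(2\pi i t)$, and applying $S(x,y)=(x+\bm\alpha,\,y+x)$ gives $(f\circ S)(x,y)=\sum_{m,n} c_{m,n}\, e(m\cdot\bm\alpha)\, e((m+n)\cdot x + n\cdot y)$. If $f$ is an eigenfunction with eigenvalue $\lambda$, matching Fourier coefficients of $f\circ S$ and $\lambda f$ yields the recursion $c_{m,n}=\lambda^{-1}\, e((m-n)\cdot\bm\alpha)\, c_{m-n,n}$ for every $(m,n)$. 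For fixed $n\neq 0$ this forces $|c_{m+kn,n}|$ to be independent of $k\in \mathbb Z$ along the infinite arithmetic progression $\{m+kn:k\in \mathbb Z\}\subset \mathbb Z^d$, which contradicts $\sum_{m}|c_{m,n}|^2<\infty$ unless $c_{m,n}=0$ for every such $m$. Hence only terms with $n=0$ survive, proving that every eigenfunction is a linear combination of characters $e(m\cdot x)$. Conversely, each $e(m\cdot x)$ is plainly an eigenfunction with eigenvalue $e(m\cdot \bm\alpha)$, and distinct $m\in \mathbb Z^d$ produce distinct eigenvalues because $\bm\alpha$ generates $\mathbb T^d$ (so $(m_1-m_2)\cdot\bm\alpha\notin\mathbb Z$ whenever $m_1\neq m_2$). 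The Kronecker factor is therefore $\mb Z=(\mathbb T^d,\mathcal Z, m_{\mathbb T^d}, R_{\bm\alpha})$ with factor map $\pi(x,y)=x$, and the conditional expectation is obtained by integrating out $y$: $P_{\mb Z}f(x,y)=\int_{\mathbb T^d}f(x,y')\,dy'=f'(x)$.

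With this identification of $P_{\mb Z}f$, the limit formula is an immediate specialization of (\ref{eqn:RothIntegralFormula}): taking $\tilde{f}=f'$ in that equation we obtain
\[
L_3(f,S)=\int_{\mathbb T^d}\int_{\mathbb T^d} f'(x)\,f'(x+s)\,f'(x+2s)\,dx\,ds,
\]
which is the asserted value of the Ces\`aro limit $\lim_{N\to\infty}\frac{1}{N}\sum_{n=1}^N \int f\cdot S^nf\cdot S^{2n}f\,d\mu$. The only mildly technical point is the Fourier-coefficient vanishing step in the middle paragraph; once that is in place, the observation reduces to an application of Theorem \ref{thm:ErgodicRoth}.
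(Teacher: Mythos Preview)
Your proposal is correct and follows essentially the same approach as the paper: identify the Kronecker factor via the eigenfunction description in the paragraph preceding the Observation, then substitute $\tilde{f}=f'$ into the integral formula (\ref{eqn:RothIntegralFormula}) of Theorem \ref{thm:ErgodicRoth}. The only difference is that you supply a self-contained Fourier-coefficient argument for the eigenfunction claim, whereas the paper outsources that step to Proposition 3.11 of \cite{FurstenbergBook}; both routes lead to the same conclusion in the same way.
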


\section{Reduction to Weyl systems}\label{sec:AffineReduction}

The next lemma is one key step in the proof of Lemma \ref{lem:BHKroneckerAlternative}.  Its proof is similar to the proof of Lemma 8.1 in \cite{ABB}.

\begin{lemma}\label{lem:ReductionToWeyl}
Let $\mb X = (X,\mathcal B,\mu,T)$ be a totally ergodic MPS and $f:X\to [0,1]$.  For all $\varepsilon>0$, there is a factor $\pi:\mb X\to \mb Y$ such that
\begin{enumerate}
\item[(i)]  $\mb Y$ is a factor of a standard 2-step Weyl system;

\item[(ii)] setting $\tilde{f}\circ \pi =P_{\mb Y}f$, we have
\[\lim_{N\to\infty} \Bigl|\frac{1}{N}\sum_{n=1}^N  g(n^2 \bm\beta)\int f \cdot T^{n}f\cdot T^{2n}f\, d\mu -  g(n^2\bm\beta) \int \tilde{f}\cdot S^{n}\tilde{f}\cdot S^{2n}\tilde{f}\, d\nu\Bigr|<\varepsilon\]
for every continuous $g:\mathbb T^r\to [0,1]$ and every $\bm\beta\in \mathbb T^r$, for all $r\in \mathbb N$.
\end{enumerate}
If we assume $\bm\beta$ generates $\mathbb T^r$, then (ii) holds for every Riemann integrable $g:\mathbb T^r\to [0,1]$.
\end{lemma}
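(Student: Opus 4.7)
The plan is to combine Frantzikinakis's characteristic factor theorem (\cite[Theorem B]{FrantzikinakisThreePoly}) for weighted polynomial progressions with a Fourier approximation of the weight $g$. First, when $\bm\beta$ generates $\mathbb T^r$, Weyl equidistribution of $(n^2\bm\beta)_{n\in\mathbb N}$ in $\mathbb T^r$ lets me sandwich any Riemann integrable $g$ between continuous functions $g^-\leq g\leq g^+$ with $\int(g^+-g^-)\, dm<\varepsilon'$, reducing to continuous $g$. For continuous $g$, Stone-Weierstrass gives $g=\sum_{j=1}^J c_j\chi_j+g_0$ with $\chi_j\in\widehat{\mathbb T}^r$ and $\|g_0\|_\infty<\varepsilon'$; the $g_0$ contribution is pointwise bounded by $\varepsilon'$ and absorbed into the allowed error. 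It therefore suffices to produce a single factor $\mb Y$ of $\mb X$, realizable as a factor of a standard $2$-step Weyl system, such that for every $\gamma\in\mathbb R$ (corresponding to a character in the expansion of $g$) the substitution $f\rightsquigarrow P_{\mb Y}f$ changes
\[
\lim_{N\to\infty}\frac{1}{N}\sum_{n=1}^N e(n^2\gamma)\int f\cdot T^nf\cdot T^{2n}f\, d\mu
\]
by at most a constant multiple of $\|f-P_{\mb Y}f\|_{L^2(\mu)}$, uniformly in $\gamma$.

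The characteristic factor step identifies the $2$-step nilfactor $\mb Z_2$ of $\mb X$ as characteristic for the above weighted average. One route is to pass to the product system $\mb X\times(\mathbb T,R_\gamma)$ and apply \cite[Theorem B]{FrantzikinakisThreePoly} to the (essentially distinct) polynomial triple of the base and the added coordinate; a more elementary route is a van der Corput estimate that absorbs the phase $e(n^2\gamma)$ into a new quadratic and bounds the average by a Gowers $U^3$-seminorm of $f$, which vanishes on the orthogonal complement of $\mb Z_2$. Either route yields a bound independent of $\gamma$, so summing against the coefficients $c_j$ in the Fourier expansion of $g$ gives a single $L^2$-estimate.

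Because $\mb X$ is totally ergodic, $\mb Z_2$ is also totally ergodic and an inverse limit of finite-dimensional totally ergodic $2$-step nilsystems $\mb W_m$. I choose $m$ with $\|P_{\mb Z_2}f-P_{\mb W_m}f\|_{L^2}$ small enough that the accumulated error in all the above steps is below $\varepsilon$, and let $\mb Y$ be the factor of $\mb X$ corresponding to $\mb W_m$. For (i), I invoke the structure theory of totally ergodic $2$-step affine nilsystems from \cite{FrKrPolyAffine}: the nilsystem $\mb W_m$, represented as an Anzai skew product $(x,y)\mapsto(x+\alpha,y+\phi(x))$ over a Kronecker base, is a factor of a standard Weyl skew product $(x',y')\mapsto(x'+\alpha',y'+x')$ on a possibly enlarged torus, obtained by Fourier-expanding $\phi$ and augmenting the base with auxiliary coordinates whose rotations realize the Fourier coefficients of $\phi$. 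Combining this with the $L^2$-estimate of the previous paragraph yields (ii).

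The main obstacle is uniformity in $\gamma$ at the characteristic factor step: a single factor $\mb Y$ must control every character appearing in the Fourier expansion of $g$, not just one at a time. The product-system approach requires verifying that the $2$-step nilfactor of $\mb X\times(\mathbb T,R_\gamma)$ projects onto $\mb Z_2$ after integrating out the $\mathbb T$-coordinate, uniformly in $\gamma$; the van der Corput approach requires a Gowers-seminorm bound with constants independent of $\gamma$. A secondary difficulty, less severe but needing care, is that the realization of $\mb W_m$ as a factor of a standard $2$-step Weyl system must preserve total ergodicity and depend only on $\mb W_m$, not on $f$, so that the same $\mb Y$ serves for the Riemann-integrable case covered by Weyl equidistribution.
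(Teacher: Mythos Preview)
Your overall strategy is close to the paper's, but there is a genuine gap at the structural step. You take $\mb Y=\mb W_m$, a finite-dimensional totally ergodic $2$-step \emph{nilsystem}, and then invoke \cite{FrKrPolyAffine} to realize it as a factor of a standard $2$-step Weyl system. But Lemma~4.1 of \cite{FrKrPolyAffine} has as its hypothesis that the transformation is already of the form $x\mapsto Ax+b$ on $\mathbb T^d$ with $A$ a unipotent integer matrix---that is, a $2$-step \emph{affine} system on a torus. A general totally ergodic $2$-step nilsystem need not be of this form: for the Heisenberg nilmanifold $G/\Gamma$ with $G_0=G$ nonabelian, the quotient $G/([G_0,G_0]\Gamma)$ is a proper factor, and the skewing cocycle over the Kronecker base is not a genuine (continuous, affine) torus map---your ``Fourier-expanding $\phi$'' description does not apply. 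So the implication ``$2$-step nilsystem $\Rightarrow$ factor of a standard Weyl system'' is unjustified.

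The paper closes this gap by inserting exactly the step you skipped: after reaching a finite-dimensional $2$-step nilsystem $\mb Y_0$, it passes to its maximal $2$-step \emph{affine} factor $\mb A_2(\mb Y_0)$, which (by Lemma~\ref{lem:2AofY}) \emph{is} a unipotent affine map on a finite-dimensional torus and hence (by \cite{FrKrPolyAffine}) a factor of a standard Weyl system. The crucial point is that replacing $\tilde f$ by $P_{\mb A_2}\tilde f$ does not change the weighted limit: this is Corollary~\ref{cor:ReducingToA2}, proved via Lemma~\ref{lem:ReductionToAffine} using \cite[Theorem~B]{FrantzikinakisThreePoly} in a case analysis on whether $e(\mathbb Z\beta)$ meets $\mathcal E(\mb X)$. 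Your van~der~Corput/$U^3$ route, even if it succeeds, only gives that $\mb Z_2$ is characteristic; it does not let you drop to $\mb A_2$, which is what (i) requires. Note also that the paper handles uniformity in $g$ more directly than your Fourier decomposition: Lemma~\ref{lem:BHK46} (from \cite{BHK}) gives $\frac{1}{N}\sum_n |a_n-b_n|\to 0$ for the term-by-term differences, which immediately controls \emph{every} bounded weight sequence $c_n$ at once, so the ``main obstacle'' you flag is not actually where the difficulty lies.
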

We prove Lemma \ref{lem:ReductionToWeyl} at the end of this section.  Most of the proof is contained in the next lemma, an application of Theorem B of \cite{FrantzikinakisThreePoly}.  It concerns the \emph{maximal $2$-step affine factor} $\mb A_2$ of an ergodic MPS $\mb X$; see \cite{FrantzikinakisThreePoly} for discussion and exposition. Additionally, we use the standard fact that the Kronecker factor of $\mb X$ is a factor of $\mb A_2$.

If $\mb X$ is an MPS, we write $\mathcal E(\mb X)$ for the group of eigenvalues of $\mb X$ (see \S\ref{sec:Eigenvalues}). We continue to write $e(t)$ for $\exp(2\pi i t)$, and we use the notation $P_{\mb Y}$ introduced in \S\ref{sec:FactorExtension}.

\begin{lemma}\label{lem:ReductionToAffine}
  Let $\mb X=(X,\mathcal B,\mu,T)$ be an ergodic measure preserving system with maximal 2-step affine factor $\mb A_2$ and let $\beta\in [0,1)$.  Then $\mb A_2$ is characteristic for the averages
  \begin{equation}\label{eqn:squarePhaseReduction}
     B_N(f_1,f_2):=\frac{1}{N}\sum_{n=1}^N e(n^2\beta) \cdot T^n f_1 \cdot T^{2n}f_2,
  \end{equation}
  meaning
  \begin{equation}\label{eqn:Characteristic}
  \lim_{N\to\infty} B_N(f_1,f_2) = \lim_{N\to \infty} B_N(P_{\mb A_2}f_1, P_{\mb A_2} f_2)
  \end{equation}
 in $L^2(\mu)$, for all bounded $f_1, f_2$.
 Furthermore, if $\beta$ is irrational and
 \begin{equation}\label{eqn:Spectrum}
 \mathcal E(\mb X) \cap \{e(n\beta)\}_{n\in \mathbb Z}= \{1\}
  \end{equation} then $\lim_{N\to\infty} B_N(f_1,f_2)=0$ in $L^2(\mu)$ for all bounded measurable $f_i$.
\end{lemma}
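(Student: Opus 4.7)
The plan is to establish the characteristic factor statement (\ref{eqn:Characteristic}) by directly invoking Theorem B of \cite{FrantzikinakisThreePoly}, and then on the maximal $2$-step affine factor $\mb A_2$ carry out an explicit Fourier/Weyl-sum analysis to derive the vanishing conclusion.

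For (\ref{eqn:Characteristic}), the weight $n\mapsto e(n^2\beta)$ is a prototypical $2$-step nilsequence: it can be realized as $F(S^n(0,0))$, where $\mb S=(\mathbb T^2,S)$ is the $2$-step Weyl system $S(x,y)=(x+\beta,y+x)$ and $F(x,y)=e(x+2y)$. Passing to the product system $\mb X\times \mb S$, the weighted average becomes a genuine three-term polynomial ergodic average of the form $\frac{1}{N}\sum_n U^n h_0\cdot U^n h_1\cdot U^{2n} h_2$, to which Theorem B of \cite{FrantzikinakisThreePoly} applies, identifying the $2$-step affine factor as characteristic. Since the $2$-step affine factor of $\mb X\times \mb S$ projects canonically onto the $2$-step affine factor of $\mb X$ along the $\mb X$-fiber, this yields (\ref{eqn:Characteristic}).

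For the vanishing, after replacing $f_i$ by $P_{\mb A_2}f_i$ and using the fact that $\mb A_2$ is an inverse limit of $2$-step affine nilsystems, I would reduce to the case where $\mb X$ itself has the form $X=Z\times K$ with $T(z,w)=(z+\alpha,w+\phi(z))$, $Z$ being the Kronecker factor and $\phi:Z\to K$ affine. Fourier-expanding each $f_i$ in the $K$-coordinate and tracking the cocycle $\sum_{k=0}^{n-1}\phi(z+k\alpha)$, each Fourier term of $B_N(f_1,f_2)$ takes the pointwise form
\[
\xi(z)\cdot \frac{1}{N}\sum_{n=1}^N e\bigl(n^2(\beta+a)+n\, b(z)\bigr),
\]
for constants $a$ determined by the fiber characters and $b(z)$ involving Kronecker frequencies. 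By Weyl's equidistribution theorem this sum tends to $0$ in $L^2$ unless $\beta+a$ is rational. In the latter case, $e(\ell\beta)=e(-\ell a)$ for some $\ell\ne 0$, and since $e(-\ell a)\in\mathcal E(\mb X)$ (as $a$ is an integer combination of Kronecker frequencies of $\mb X$), this contradicts (\ref{eqn:Spectrum}). Hence only the trivial fiber mode survives, and on it the average reduces to $\frac{1}{N}\sum_n e(n^2\beta)\cdot(\text{bounded Kronecker-measurable term})$, which vanishes in $L^2(\mu)$ because $\beta$ is irrational.

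The main obstacle will be the bookkeeping of the quadratic phase $a$ arising from the fiber characters: one must show rigorously that $a$ lies in the group generated (mod $\mathbb Z$) by the Kronecker frequencies of $\mb X$, so that rationality of $\beta+a$ forces a nontrivial collision $e(\ell\beta)\in\mathcal E(\mb X)$. This is a standard computation in the theory of $2$-step affine nilsystems, but care is needed when the cocycle $\phi$ contains both a linear and a constant part, and when passing to the inverse limit. Once this identification is secured, the Weyl equidistribution argument finishes the proof mode-by-mode.
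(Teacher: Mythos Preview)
Your product-system argument for (\ref{eqn:Characteristic}) has a genuine gap: it only works when $\mb X\times\mb S$ is \emph{ergodic}, and ergodicity fails precisely when (\ref{eqn:Spectrum}) fails (the Weyl system $\mb S$ has eigenvalue group $\{e(n\beta):n\in\mathbb Z\}$, so any common nontrivial eigenvalue with $\mb X$ destroys ergodicity of the product).  Theorem B of \cite{FrantzikinakisThreePoly} is stated for ergodic systems, so you cannot invoke it on the product in this case.  You also do not treat rational $\beta$, where $\mb S$ itself is not ergodic.  Finally, after absorbing the weight the average on the product is the \emph{linear} $3$-AP average $h_0\cdot U^n h_1\cdot U^{2n}h_2$ (not the expression you wrote), so the relevant characteristic factor there is the Kronecker factor, not $\mb A_2$; you would need an additional argument to transfer a Kronecker-factor statement on $\mb X\times\mb S$ back to an $\mb A_2$-statement on $\mb X$.

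The paper circumvents the ergodicity problem by splitting into cases.  For rational $\beta$, periodicity reduces to linear averages and Theorem A suffices.  For irrational $\beta$ with (\ref{eqn:Spectrum}) failing, the paper takes an eigenfunction $g$ of $\mb X$ with eigenvalue $e(k\beta)$ and uses the identity $e(k\beta)^m=\bar g\cdot T^m g$ to rewrite $e(n^2\beta)$ (after splitting mod $k$) as $\bar g\cdot T^{kn^2+2jn}g$; this absorbs the phase into a polynomial average \emph{on $\mb X$ itself}, with essentially distinct polynomials $kn^2+2jn$, $kn+j$, $2kn+2j$, so Theorem B applies directly without any product construction.  For the vanishing under (\ref{eqn:Spectrum}) the paper does use your product $\mb X\times\mb S$, which is now ergodic; but rather than Fourier analysis on $\mb A_2$, it writes $e(n^2\beta)=g_0\cdot S^n g_1\cdot S^{2n}g_2$ with $g_i(x,y)=e(\pm y)$, observes that each $g_i$ is orthogonal to all eigenfunctions of $\mb S$, hence $g_i\otimes f_i$ is orthogonal to all eigenfunctions of the product, and concludes the limit is $0$ via Furstenberg's Kronecker-factor theorem.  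Your mode-by-mode Weyl computation on $\mb A_2$ would likely work too, but is considerably more laborious and, as written, rests on a version of (\ref{eqn:Characteristic}) you have not established in the non-spectrally-disjoint case.
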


\begin{remark}
  The existence of $\lim_{N\to\infty} B_N(f_1,f_2)$ is not immediately obvious, but the proof of Lemma \ref{lem:ReductionToAffine} will show that it is a special case of the existence of limits of polynomial multiple ergodic averages found in \cite{FrantzikinakisThreePoly}.
\end{remark}

\begin{proof}  We first dispense with the case where $\beta$ is rational. In this case the sequence $e(n^2\beta)$ is periodic, so we fix a period $p\in \mathbb N$ such that $e((pn+q)^2\beta)=e(q^2\beta)$ for every $n$ and $q\in \mathbb N$. For $0\leq r\leq p$ and $N\in \mathbb N$, we can then write $B_{pN+r}(f_1,f_2)$ as
\[
\frac{1}{pN+r}\Bigl(\sum_{n=pN+1}^{pN+r} e(n^2\beta)\cdot T^nf_1\cdot T^{2n}f_2+\sum_{q=1}^{p}e(q^2\beta)\sum_{n=0}^{N-1}   T^{pn+q}f_1\cdot T^{2(pn+q)}f_2\Bigr).
\]
For large $N$ the sum $\sum_{n=pN+1}^{pN+r} e(n^2\beta)\cdot T^nf_1\cdot T^{2n}f_2$ can be ignored, and we have
\[
\lim_{N\to\infty} B_{N}(f_1,f_2)= \lim_{N\to\infty} \frac{1}{p}\sum_{q=1}^{p}e(q^2\beta) \frac{1}{N}\sum_{n=0}^{N-1}   T^{pn+q}f_1\cdot T^{2(pn+q)}f_2
\]
Theorem A of \cite{FrantzikinakisThreePoly} now implies that the Kronecker factor of $\mb X$ (which is itself a factor of $\mb A_2$) is characteristic for the averages above.  This proves the first assertion of the lemma when $\beta$ is rational.

We now assume $\beta\in (0,1)$ is irrational and consider two cases, based on whether (\ref{eqn:Spectrum}) holds.  When (\ref{eqn:Spectrum}) fails we write the coefficient $e(n^2\beta)$ in terms of $\bar{g}T^{p(n)}g$, where $g\in L^\infty(\mu)$ and $p$ is a polynomial.  When (\ref{eqn:Spectrum}) holds we write $e(n^2\beta)$ as $g_0S^ng_1S^{2n}g_2$ where $\mb Y=(Y,\mathcal D,\nu,S)$ is an ergodic $MPS$ such that $\mb X\times \mb Y$ is ergodic and $g_i\in L^\infty(\nu)$.  In each case we write $B_N(f_1,f_2)$ as a familiar multiple ergodic average and apply known results.

For the first case, we assume (\ref{eqn:Spectrum}) fails. We fix $k\in \mathbb N$ such that $1\neq e(k\beta)\in \mathcal E(\mb X)$, meaning $e(k\beta)$ is a nontrivial eigenvalue of $\mb X$.  Let $g\in L^2(\mu)$ be a corresponding eigenfunction, so that $g\circ T= e(k\beta) g$ and $|g|= 1$ $\mu$-almost everywhere.  Then $e(k\beta)^m = \bar{g}\cdot T^{m}g$ $\mu$-almost everywhere.  In particular,
\begin{equation}\label{eqn:Eigenform}
  e(k\beta)^{kn^2+2nj} = \bar{g}\cdot T^{kn^2+2jn}g \quad \text{ for all } j,n\in \mathbb Z
\end{equation}
in $L^2(\mu)$.  Then
  \begin{align*}
  \lim_{N\to\infty}B_N(f_1,f_2)&= \lim_{N\to\infty}\frac{1}{k}\sum_{j=0}^{k-1} \frac{1}{N}\sum_{n=0}^{N-1} e((nk+j)^2\beta)  T^{nk+j} f_1 \cdot  T^{2nk+2j} f_2\\
   &= \lim_{N\to\infty}   \frac{1}{k}\sum_{j=0}^{k-1} \frac{1}{N}\sum_{n=0}^{N-1}  e(j^2\beta) e(k\beta)^{kn^2+2jn} T^{nk+j} f_1 \cdot  T^{2nk+2j} f_2\\
   &= \lim_{N\to\infty}  \frac{1}{k}\sum_{j=0}^{k-1} e(j^2\beta) \frac{1}{N }\sum_{n=0}^{N-1} \overline{g} \cdot T^{kn^2+2jn}g\cdot  T^{nk+j}f_1 \cdot T^{2nk+2j}f_2.  && \text{by (\ref{eqn:Eigenform})}
  \end{align*}
The polynomial exponents $p_1(n)=kn^2+2jn$, $p_2(n)=nk+j$, $p_3(n)=2nk+2j$ are, in the terminology of \cite{FrantzikinakisThreePoly}, \emph{essentially distinct} and not \emph{type} ($e_1$).  Theorem B of \cite{FrantzikinakisThreePoly} therefore asserts that  $f_1$ and $f_2$ in (\ref{eqn:squarePhaseReduction}) can be replaced with $P_{\mb A_2}f_1$ and $P_{\mb A_2}f_2$, respectively, without changing the value of the limit.  This proves the first assertion of the lemma in the case where $\mathcal E(\mb X)\cap \{e(n\beta)\}_{n\in \mathbb Z}\neq \{1\}$.

Now we assume that (\ref{eqn:Spectrum}) holds. We will prove that $\lim_{N\to\infty} B_N(f_1,f_2)=0$ for all $f_1, f_2\in L^\infty (\mu)$.  This implies (\ref{eqn:Characteristic}), since $P_{\mb A_2} f_i \in L^\infty(\mu)$.

Consider the system $\mb Y=(\mathbb T^2,\mathcal D,m,S)$, where $S(x,y)=(x+\beta,y+x)$; this $\mb Y$ is ergodic since $\beta$ is irrational.   As discussed in \S\ref{sec:WeylKronecker}, the eigenvalues of $\mb Y$ are $\{e(n\beta)\}_{n\in \mathbb Z}$.  Thus $\mb Y$ has no nontrivial eigenvalues in common with $\mb X$, by (\ref{eqn:Spectrum}). The product system $(\mathbb T^2\times X, m \times \mu, S\times T)$ is therefore ergodic, by Lemma \ref{lem:ProductErgodic}.
We will write $B_N(f_1,f_2)$ as an element of $L^2(m\times \mu)$.  First observe that for all $(x,y)\in \mathbb T^2$, we have
\begin{align*}
e(n^2\beta)&= e(y)e(-2y-2nx  -n(n+1) \beta)e(y+2nx+n(2n+1)\beta) \\
&= g_0\cdot g_1(S^n(x,y)) \cdot g_2(S^{2n}(x,y)),
\end{align*} where $g_0(x,y):=e(y)$, $g_1(x,y):=e(-2y)$, $g_2(x,y):=e(y)$.  So
\begin{equation}\label{eqn:Absorb3}
e(n^2\beta) \cdot T^n f_1 \cdot T^{2n}f_2=  g_0\otimes 1_{X} \cdot (S\times T)^n g_1\otimes f_1 \cdot (S\times T)^{2n}g_2\otimes f_2 \in L^2(m\times \mu).
\end{equation}
When computing the limit of the averages of the right hand side in (\ref{eqn:Absorb3}), Theorem \ref{thm:ErgodicRoth} allows us to replace each $g_i\otimes f_i$ with its projection $\phi_i:=P_{\mb Z}(g_i\otimes f_i)$, where $\mb Z$ is the Kronecker factor of $\mb Y\times \mb X$. By Lemma \ref{lem:WM} and Observation \ref{obs:KroneckerWeyl}, $g_i\otimes f_i$ is orthogonal to every eigenfunction of $\mb Y\times \mb X$,  so $\phi_i =0$.  Thus the limit of the averages is $0$ in $L^2(m\times \mu)$.  Since $B_N(f_1,f_2)$ belongs to the natural embedding of $L^2(\mu)$ in $L^2(m\times \mu)$, this proves $\lim_{N\to\infty} B_N(f_1,f_2)=0$ in $L^2(\mu)$.
\end{proof}

\begin{corollary}\label{cor:ReducingToA2} Let $(X,\mathcal B,\mu,T)$ be an ergodic measure preserving system and let $\bm\beta=(\beta_1,\dots,\beta_r)\in \mathbb T^r$. If $g:\mathbb T^r\to \mathbb C$ is continuous and $f_i\in L^\infty(\mu)$, then
\[
\lim_{N\to\infty} \frac{1}{N}\sum_{n=1}^N g(n^2\bm\beta)  \cdot T^{n} f_1 \cdot T^{2n} f_2= \lim_{N\to\infty} \frac{1}{N}\sum_{n=1}^N g(n^2\bm\beta) \cdot T^{n} \bar{f}_1 \cdot T^{2n}\bar{f}_2
\]
in $L^2(\mu)$, where $\mb A_2$ is the maximal 2-step affine factor of $\mb X$, and $\bar{f}_i=P_{\mb A_2}f_i$.
\end{corollary}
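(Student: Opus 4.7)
My plan is to deduce the corollary from Lemma \ref{lem:ReductionToAffine} by a standard Stone--Weierstrass density argument. The key observation is that the map $g \mapsto \frac{1}{N}\sum_{n=1}^N g(n^2\bm\beta)\, T^n h_1 \cdot T^{2n} h_2$ is linear in $g$, and the resulting averages are uniformly bounded in $L^2(\mu)$ by $\|g\|_\infty \|h_1\|_\infty \|h_2\|_\infty$. Since $\bar f_i = P_{\mb A_2}f_i$ also lies in $L^\infty(\mu)$ with $\|\bar f_i\|_\infty \leq \|f_i\|_\infty$, it therefore suffices to verify the identity on a class of $g$ whose linear span is uniformly dense in $C(\mathbb T^r)$.

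The natural such class is the characters $g(\mb x) = e(\mb k\cdot \mb x) = e(k_1 x_1 + \cdots + k_r x_r)$ for $\mb k\in\mathbb Z^r$. For such $g$ we have $g(n^2\bm\beta) = e(n^2\beta)$ where $\beta := \mb k\cdot \bm\beta \in [0,1)$, so Lemma \ref{lem:ReductionToAffine} (applied separately in the rational and irrational cases) yields
\[
\lim_{N\to\infty}\frac{1}{N}\sum_{n=1}^N e(n^2\beta)\, T^n f_1\cdot T^{2n}f_2 \;=\; \lim_{N\to\infty}\frac{1}{N}\sum_{n=1}^N e(n^2\beta)\, T^n \bar f_1\cdot T^{2n}\bar f_2
\]
in $L^2(\mu)$. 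Taking finite linear combinations establishes the corollary for every trigonometric polynomial $p$.

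For the general case I write $\Phi_N(h; u_1, u_2) := \frac{1}{N}\sum_{n=1}^N h(n^2\bm\beta)\, T^n u_1\cdot T^{2n}u_2$ and approximate a given continuous $g$ uniformly to within $\varepsilon$ by a trigonometric polynomial $p$ (Stone--Weierstrass). The bound $\|\Phi_N(g-p; u_1, u_2)\|_{L^2(\mu)} \leq \varepsilon\|u_1\|_\infty \|u_2\|_\infty$, valid uniformly in $N$, combined with convergence of $\Phi_N(p; f_1, f_2)$ in $L^2(\mu)$, shows that $\Phi_N(g; f_1, f_2)$ is Cauchy in $L^2(\mu)$, hence convergent; the same applies with $\bar f_i$ in place of $f_i$. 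Letting $\varepsilon\to 0$ upgrades the equality $\lim_N \Phi_N(p; f_1, f_2) = \lim_N \Phi_N(p; \bar f_1, \bar f_2)$ to the desired equality for $g$.

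There is no essential obstacle here; the corollary is a routine consequence of Lemma \ref{lem:ReductionToAffine} together with uniform approximation by trigonometric polynomials. The one point worth highlighting is that the Cauchy argument above does not require prior knowledge that $\lim_N \Phi_N(g; f_1, f_2)$ exists for arbitrary continuous $g$; the argument produces that existence as a byproduct, using only the convergence guaranteed by Lemma \ref{lem:ReductionToAffine} for individual characters.
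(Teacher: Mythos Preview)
Your proposal is correct and follows essentially the same approach as the paper: uniformly approximate $g$ by trigonometric polynomials, reduce to the case where $g$ is a character so that $g(n^2\bm\beta)=e(n^2\alpha)$ for some $\alpha\in[0,1)$, and then invoke Lemma~\ref{lem:ReductionToAffine}. Your write-up is more detailed than the paper's (in particular you spell out the Cauchy argument showing existence of the limit for general continuous $g$), but the underlying argument is the same.
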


\begin{proof}
  Uniformly approximating $g$ by trigonometric polynomials, it suffices to prove the lemma in the case where $g$ is a character of $\mathbb T^r$.  In this case, we can write $g(n^2\bm\beta)$ as $e(n^2\alpha)$ for some $\alpha\in [0,1)$ and apply Lemma \ref{lem:ReductionToAffine}. \end{proof}

\subsection{Nilsystems and their affine factors}

The following is a restatement of Part (i) of Lemma \ref{lem:NilCharacteristic}.

\begin{lemma}\label{lem:BHK46}
	Let $\mb X = (X,\mathcal B, \mu,T)$ be an ergodic MPS, $f_i\in L^\infty(\mu)$, and $\varepsilon>0$.  There is a factor $\pi:\mb X\to \mb Y=(Y,\mathcal D,\nu,S)$ which is a 2-step nilsystem such that for every bounded sequence $(c_n)_{n\in\mathbb N}$  we have
	\begin{equation}\label{eqn:ZeroDensity}
	\limsup_{N\to \infty} \Bigl|\frac{1}{N}\sum_{n=1}^N c_n \int f_0\cdot T^{ n}f_1 \cdot T^{2 n}f_2\, d\mu - c_n\int \tilde{f}_0 \cdot S^{ n}\tilde{f}_1 \cdot  S^{2 n}\tilde{f}_2 \, d\nu\Bigr| <\varepsilon \sup_{n} |c_n|
	\end{equation}
where $\tilde{f}_i\circ \pi:=P_{\mb Y}f_i$.
\end{lemma}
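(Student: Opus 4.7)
The plan is to compose the factor furnished by Lemma \ref{lem:NilCharacteristic}(i) with a further approximation of its inverse-limit target by a single $2$-step nilsystem, and then absorb the resulting error via an $L^1$-perturbation bound on three-term correlations. I would first apply Lemma \ref{lem:NilCharacteristic}(i) to $\mb X$ and $f_0, f_1, f_2$ to obtain a factor $\pi_1 : \mb X \to \mb Y_1 = (Y_1, \mathcal D_1, \nu_1, S_1)$ with $\mb Y_1$ an inverse limit of ergodic $2$-step nilsystems (the $2$-step restriction is automatic, since the characteristic factor for three-term averages is the Host--Kra $2$-step factor). Writing $\tilde f_i \circ \pi_1 := P_{\mb Y_1} f_i$, Lemma \ref{lem:NilCharacteristic}(i) gives
\[
\lim_{N \to \infty} \frac{1}{N} \sum_{n=1}^N \Bigl| \int f_0 \cdot T^n f_1 \cdot T^{2n} f_2 \, d\mu - \int \tilde f_0 \cdot S_1^n \tilde f_1 \cdot S_1^{2n} \tilde f_2 \, d\nu_1 \Bigr| = 0,
\]
so for any bounded $(c_n)$ with $C := \sup_n |c_n|$, multiplying the $n$th term by $|c_n| \leq C$ and averaging still produces a quantity tending to $0$.

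Next I would invoke the inverse-limit definition stated just above Lemma \ref{lem:NilCharacteristic}, applied simultaneously to $\tilde f_0, \tilde f_1, \tilde f_2$: for any prescribed $\varepsilon' > 0$ there is a factor $\pi_2 : \mb Y_1 \to \mb Y$ with $\mb Y = (Y, \mathcal D, \nu, S)$ an ergodic $2$-step nilsystem such that $\|\tilde f_i - P_{\mb Y} \tilde f_i\|_{L^1(\nu_1)} < \varepsilon'$ for $i = 0, 1, 2$; handling three functions at once is standard, since the join of finitely many $2$-step nilfactors inside $\mb Y_1$ is again a $2$-step nilfactor. Letting $\bar f_i \in L^\infty(\nu)$ satisfy $\bar f_i \circ \pi_2 = P_{\mb Y} \tilde f_i$ and setting $M := \max_i \|f_i\|_\infty$, a standard three-term telescoping estimate (the $k = 2$ instance of Lemma \ref{lem:MultiBound}) will yield, for every $n$,
\[
\Bigl| \int \tilde f_0 \cdot S_1^n \tilde f_1 \cdot S_1^{2n} \tilde f_2 \, d\nu_1 - \int \bar f_0 \cdot S^n \bar f_1 \cdot S^{2n} \bar f_2 \, d\nu \Bigr| < 3 M^2 \varepsilon'.
\]

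Combining the two estimates by the triangle inequality, with $\pi := \pi_2 \circ \pi_1 : \mb X \to \mb Y$, the limsup on the left of (\ref{eqn:ZeroDensity}) is bounded by $0 + C \cdot 3 M^2 \varepsilon'$; choosing $\varepsilon' < \varepsilon / (3 M^2)$ will deliver the required bound $\varepsilon C = \varepsilon \sup_n |c_n|$. I do not anticipate a substantive obstacle. The only technical care is in the parenthetical remark above, namely that the approximating factor $\mb Y$ may be taken to be a single $2$-step nilsystem rather than an inverse limit, which follows from the Host--Kra structure theory and closure of ergodic $2$-step nilfactors under finite joinings within $\mb Y_1$.
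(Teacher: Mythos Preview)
Your proposal is correct and is exactly the argument the paper has in mind: the paper gives no separate proof of Lemma \ref{lem:BHK46}, declaring it simply ``a restatement of Part (i) of Lemma \ref{lem:NilCharacteristic},'' and your two-step plan (pass to the inverse-limit factor where the averaged absolute differences vanish, then approximate the $\tilde f_i$ in $L^1$ by their projections onto a single $2$-step nilfactor and control the resulting correlation error via Lemma \ref{lem:MultiBound}) is precisely how one unpacks that restatement. The only detail worth noting explicitly is the tower property $P_{\mb Y}f_i = P_{\mb Y}(P_{\mb Y_1}f_i)$, which ensures that the $\bar f_i$ you obtain really are the $P_{\mb Y}f_i$ claimed in the statement.
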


When computing ergodic averages for ergodic 2-step affine nilsystems, the following lemma allows us to specialize to standard Weyl systems.

\begin{lemma}[\cite{FrKrPolyAffine}, Lemma 4.1]\label{lem:FrKrStandard}

Let $T:\mathbb T^d\to \mathbb T^d$ be defined by $T(x)=Ax+b$, where $A$ is a $d\times d$ unipotent integer matrix and $b\in \mathbb T^d$.  Assume furthermore that $T$ is ergodic.  Then $T$ is a factor of an ergodic affine transformation $S:\mathbb T^d\to \mathbb T^d$, where $S=S_1\times S_2\times \cdots \times S_s$ and for $r=1,2,\dots,s$, $S_r: \mathbb T^{d_r} \to \mathbb T^{d_r}$ has the form
\[
S_r(x_{1},,\dots, x_{d_r}) = (x_1+b_r,x_{2}+x_1,\dots, x_{d_r}+x_{d_{r}-1})
\]
for some $b_r\in \mathbb T$.
\end{lemma}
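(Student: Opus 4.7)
The plan is to use the Jordan decomposition of the unipotent matrix $A$ to realize $T$ as a factor of a product of standard Jordan-block affine transformations. Since $A = I + N$ with $N$ a nilpotent integer matrix, $A$ is conjugate over $\mathbb Q$ to a block diagonal matrix $M = M_1 \oplus \cdots \oplus M_s$, where each $M_r$ is the $d_r \times d_r$ Jordan block with $1$'s on the diagonal and on the subdiagonal (the linear part of $S_r$) and the block sizes $d_1, \dots, d_s$ are intrinsic to $A$ and sum to $d$. The first step is to promote this rational conjugacy to an integer intertwining: find an integer matrix $Q$ with $\det Q \neq 0$ satisfying $AQ = QM$, obtained from any rational $Q_0 \in \operatorname{GL}_d(\mathbb Q)$ with $AQ_0 = Q_0 M$ by clearing denominators. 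Since $Q$ is a nonsingular integer matrix, it induces a finite-to-one covering $\pi_0 : \mathbb T^d \to \mathbb T^d$, $\pi_0(y) = Qy$, which will serve as the linear part of the factor map.

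Next, I would analyze how the constant vector $b$ interacts with this setup. For an affine map $\pi(y) = Qy + q$ to intertwine $S(y) = My + c$ with $T(x) = Ax + b$, one needs $Qc \equiv (A - I)q + b \pmod{\mathbb Z^d}$, where $c \in \mathbb T^d$ is the constant of $S$. Restricting $c$ to the block-wise standard form (only the first entry in each block nonzero, equal to some $b_r$) leaves $b_1, \dots, b_s$ and $q$ as free parameters. A direct block-by-block computation shows that within a single Jordan block of size $d_r$, the substitution $q_{r,j} \mapsto q_{r,j} + b_{r,j+1}$ eliminates the unwanted constant $b_{r,j+1}$ for $j = 1, \dots, d_r - 1$, so one can always solve the constant equation with $c$ of the required standard form.

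Finally, I would verify ergodicity of $S = \prod_r S_r$. Each $S_r$ is ergodic if and only if $b_r$ is irrational, and the product is ergodic if and only if $1, b_1, \dots, b_s$ are linearly independent over $\mathbb Q$; this is the standard criterion for such unipotent skew products, since the Kronecker factor of $S_r$ is rotation by $b_r$ on the circle. Ergodicity of $T$ on $\mathbb T^d$ is characterized by the analogous rational independence of the rotations arising from the first coordinates of its rational Jordan blocks, and the constant-matching calculation above identifies these rotations with the $b_r$, so ergodicity transfers from $T$ to $S$.

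The main obstacle I anticipate is the bookkeeping between the rational and integer levels: clearing denominators in the Jordan basis can introduce auxiliary congruence constraints on which constants $c$ the factor map $\pi$ actually realizes, and some care is required to ensure that restricting $c$ to the block-wise standard form while adjusting $q$ still hits the given $b$ in $\mathbb T^d$. The coordinate translations in the second step should handle this, but they interact nontrivially with the rational Jordan basis chosen in the first step, and this interaction is the main technical content.
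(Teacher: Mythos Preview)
The paper does not prove this lemma; it is quoted as Lemma~4.1 of \cite{FrKrPolyAffine}, and only the remark following the statement extracts the extra information (that one may take $d_r\le 2$ when $(A-I)^2=0$) needed later.  So there is no proof in the paper to compare against.

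Your outline is essentially the standard argument and almost certainly matches the one in \cite{FrKrPolyAffine}: rational Jordan form for the unipotent $A$, clear denominators to get an integer matrix $Q$ with $AQ=QM$ and $\det Q\neq 0$, use the induced surjection $y\mapsto Qy$ on $\mathbb T^d$ as the linear part of the factor map, then adjust constants.  Two points deserve a bit more care than you give them.  First, your ``second step'' is slightly tangled: the Jordan block structure lives on the $M$ side, not the $A$ side, so the clean route is to first solve $Qc\equiv b$ in $\mathbb T^d$ with $q=0$ (possible since $Q$ induces a surjection of the torus) and \emph{then} conjugate $S'(y)=My+c$ by a translation $y\mapsto y+v$, using $(M-I)v$ to kill the non-leading entries of $c$ block by block.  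Composing with the factor map still gives a factor map.  Second, the ergodicity transfer from $T$ to $S$ is not automatic from the factor relation (factors of ergodic systems are ergodic, not conversely) and needs a line of justification.  The criterion for ergodicity of a unipotent affine map $x\mapsto Ax+b$ on $\mathbb T^d$ is that $n\cdot b\notin\mathbb Z$ for every nonzero $n\in\mathbb Z^d$ with $A^{T}n=n$.  From $Q^{T}A^{T}=M^{T}Q^{T}$ one sees that every nonzero $m\in\ker(M^{T}-I)\cap\mathbb Z^d$ has an integer multiple $km=Q^{T}n$ with $n\neq 0$ and $A^{T}n=n$; then $k(m\cdot c)\equiv n\cdot Qc\equiv n\cdot b\notin\mathbb Z$, forcing $m\cdot c\notin\mathbb Z$.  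With these two clarifications your plan goes through.
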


Although not explicitly stated in \cite{FrKrPolyAffine}, the proof there allows us to conclude that we have $d_r\leq D$, where $D$ is the degree of unipotency of $A$.  Furthermore, if $(A-I)^2=0$, as is the case when $T$ is 2-step affine, then we can take $d_r\leq 2$ for each $r$. For convenience, we may also assume that $d_r=2$ for each $r$, and therefore $s=1$.  With these specializations, the system given by $S$ above is a standard 2-step Weyl system.

\begin{proof}[Proof of Lemma \ref{lem:ReductionToWeyl}]
	Fix a totally ergodic MPS $\mb X = (X,\mathcal B,\mu,T)$, bounded measurable functions $f_i$ on $X$, $r\in \mathbb N$, and $\bm\beta\in \mathbb T^r$. Let $g:\mathbb T^r\to [0,1]$ be continuous, and let $\varepsilon>0$. Consider the averages
	\[
	A_N: = \frac{1}{N}\sum_{n=1}^N g(n^2\bm\beta)\int f_0 \cdot T^{n}f_1\cdot T^{2n}f_2\, d\mu.
	\]
	First apply Lemma \ref{lem:BHK46} to find a 2-step nilsystem $\mb Y_0=(Y_0,\mathcal D_0,\nu_0,S)$ satisfying (\ref{eqn:ZeroDensity}) with $c_n=g(n^2\bm\beta)$, and write $B_N$ for the averages
	\[\frac{1}{N}\sum_{n=1}^N g(n^2\bm\beta) \int \tilde{f}_0\cdot S^{n}\tilde{f}_1 \cdot S^{2n}\tilde{f}_2\, d\nu.\]
	Our application of Lemma \ref{lem:BHK46} means that $\limsup_{N\to \infty} |A_N-B_N|<\varepsilon$.
	
	By Corollary \ref{cor:ReducingToA2}, the factor $\mb Y:=\mb A_2(\mb Y_0)$ is characteristic for the averages $B_N$: we may replace each $\tilde{f}_i$ with $P_{\mb Y}\tilde{f}_i$ without affecting $\lim_{N\to\infty} B_N$.  The total ergodicity of $\mb X$ implies every factor of $\mb X$ is also totally ergodic; in particular $\mb A_2(\mb Y_0)$ is totally ergodic.  By Lemma \ref{lem:2AofY}, we conclude that $\mb A_2(\mb Y_0)$ is isomorphic to a unipotent $2$-step affine transformation on a finite-dimensional torus, and Lemma \ref{lem:FrKrStandard} allows us to conclude that $\mb A_2(\mb Y)$ is a factor of a standard 2-step Weyl system.

To obtain the remark after (ii) in the statement of the lemma, apply Lemma \ref{lem:RiemannCoefficients} with $y_n = n^2\bm\beta$ and $v_n = \int f \cdot T^{n}f\cdot T^{2n}f\, d\mu -  \int \tilde{f}\cdot S^{n}\tilde{f}\cdot S^{2n}\tilde{f}\, d\nu$.  We may apply Lemma \ref{lem:RiemannCoefficients} since the Weyl criterion implies $n^2\bm \beta$ is uniformly distributed in $\mathbb T^r$ whenever $\bm\beta$ is generating. \end{proof}

\begin{remark}
Our proof of Lemma \ref{lem:ReductionToWeyl} needs the hypothesis of total ergodicity to conclude that $\mb A_2(\mb Y)$ is isomorphic to a $2$-step affine transformation on a finite-dimensional torus. Without this hypothesis, $\mb A_2(\mb X)$ may be more complicated: the underlying space may be disconnected, and may even have uncountably many connected components.  In particular, the Kronecker factor of $\mb X$ could be isomorphic to a rotation on a compact uncountable totally disconnected abelian group (such as the profinite compactification of $\mathbb Z$).  This would cause two problems in the sequel: first, in Section \ref{sec:AffineLimits} we exploit the fact that the connected component of a closed subgroup $\Lambda$ of $\mathbb T^d$ has finite index in $\Lambda$ (although this may not be crucial).  Second, we simply lack a convenient algebraic description of affine systems defined on disconnected groups, and such a description is required for our computation in Proposition \ref{prop:WeakLimit}.

For similar reasons, we cannot prove Lemma \ref{lem:ReductionToWeyl} starting with an arbitrary totally ergodic $\mb X$ and passing immediately to $\mb A_2(\mb X)$.   While disconnectedness will not be a problem, it is possible that the Kronecker factor of $\mb X$ is a group rotation on an infinite dimensional torus, or a solenoid, and then $\mb A_2(\mb X)$ could be an affine transformation on such a group, which does not fit the hypothesis of Lemma \ref{lem:FrKrStandard}.
\end{remark}

\section{Joinings of groups}\label{sec:Joinings}

Given two compact abelian groups $Z$ and $W$ with cartesian product $Z\times W$, write $\pi_1$ and $\pi_2$ for the projection maps onto $Z$ and $W$, respectively.  We say a subgroup $\Gamma\subseteq Z\times W$ is a \emph{joining of $Z$ with $W$} if $\Gamma$ is closed and $\pi_1:\Gamma\to Z$ and $\pi_2:\Gamma \to W$ are both surjective.

\begin{observation}\label{obs:Joining} If $\alpha\in Z$ and $\beta\in W$ are generating elements, then the closed subgroup $\Gamma$ of $Z\times W$ generated by $(\alpha,\beta)$ is a joining of $Z$ with $W$: $\pi_1(\Gamma)$ is generated by $\alpha$ and $\pi_2(\Gamma)$ is generated by $\beta$.
\end{observation}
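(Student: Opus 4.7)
The plan is to unwind the definition of ``generates'' (given in Section \ref{sec:Definitions}) and apply it to the images of $\Gamma$ under the two coordinate projections. Closedness of $\Gamma$ is automatic since $\Gamma$ is defined as the closed subgroup of $Z \times W$ generated by $(\alpha,\beta)$, so only surjectivity of $\pi_1$ and $\pi_2$ requires argument.

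First I would observe that $\pi_1 \colon Z \times W \to Z$ is a continuous homomorphism of compact abelian groups, and that $\Gamma$ is compact (as a closed subset of the compact group $Z \times W$). Hence $\pi_1(\Gamma)$ is a compact, and therefore closed, subgroup of $Z$. Since $(\alpha,\beta) \in \Gamma$, we have $\alpha = \pi_1(\alpha,\beta) \in \pi_1(\Gamma)$. By the definition of ``generates'' recalled in Section \ref{sec:Definitions}, $Z$ is the smallest closed subgroup of $Z$ containing $\alpha$. Therefore $\pi_1(\Gamma) = Z$. The identical argument, with $\pi_2$ and $\beta$ in place of $\pi_1$ and $\alpha$, yields $\pi_2(\Gamma) = W$.

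There is essentially no obstacle here; the observation is a one-line consequence of the fact that continuous homomorphic images of closed subgroups in compact abelian groups are closed subgroups, combined with the hypothesis that $\alpha$ and $\beta$ each generate their ambient groups. The only minor subtlety worth flagging explicitly is invoking compactness to ensure that $\pi_i(\Gamma)$ is closed (rather than merely a subgroup), since otherwise one could not conclude that a subgroup containing a generator must equal the whole group.
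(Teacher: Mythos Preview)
Your proof is correct and follows the same approach as the paper. The paper treats this as a self-evident observation with the justification embedded in the statement itself (``$\pi_1(\Gamma)$ is generated by $\alpha$ and $\pi_2(\Gamma)$ is generated by $\beta$''); you have simply spelled out the details, correctly noting that compactness of $\Gamma$ ensures $\pi_i(\Gamma)$ is closed.
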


Joinings arise naturally in the computation of multiple ergodic averages.  For example, let $\Gamma:=\{(t,t)\}: t\in Z\}$ ($=$ the diagonal of $Z\times Z$), so that $\Gamma$ is a joining of $Z$ with itself.  Then we can write the integral on the right-hand side of (\ref{eqn:RothIntegralFormula}) as
\begin{equation}\label{eqn:DiagonalExample}
\int_{\Gamma} \int_Z  f(x)f(x+\pi_1(t))f(x+2\pi_2(t)) \, dx \, dm_{\Gamma}(t).
\end{equation}
The notation $\pi_i(t)$ will be cumbersome in our formulas, so we adopt the following abbreviation.
\begin{notation}\label{not:Indices}  If $\Gamma$ is a joining of $Z$ with $W$ and $t\in \Gamma$, we write $t_1$ for $\pi_1(t)$ and $t_2$ for $\pi_2(t)$.
\end{notation}
So the integral in (\ref{eqn:DiagonalExample}) can be written as $\int_\Gamma \int_Z f(x)f(x+t_1)f(x+2t_2)\, dx \, dm_{\Gamma}(t)$.

The joinings we consider will be closed subgroups of $\mathbb T^d\times\mathbb T^r$; this allows us to exploit the well known structure of such groups (detailed in \cite{RudinGroupsBook}, for example).

\begin{observation}\label{obs:ComponentOfJoining} If $\Gamma$ is a joining of $\mathbb T^d$ with $\mathbb T^r$, then its identity component is also a joining of these groups.  To see this, note that since $\Gamma$ is a closed subgroup of a finite-dimensional torus, its identity component $\Gamma_0$ has finite index in $\Gamma$.  The images of $\pi_1$ and $\pi_2$ therefore have finite index in their respective codomains $\mathbb T^d$ and $\mathbb T^r$. Since these codomains are connected, they have no proper  closed finite index subgroups, so the images $\pi_1(\Gamma_0)$, $\pi_2(\Gamma_0)$ must equal their respective codomains.
\end{observation}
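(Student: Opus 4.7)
The plan is to follow the sketch already built into the statement, turning each assertion into a short, self-contained step. I will work entirely inside the ambient group $\mathbb T^d \times \mathbb T^r$, using two standard structural facts about compact abelian Lie groups: (a) a closed subgroup of a finite-dimensional torus has only finitely many connected components, and (b) a connected compact Hausdorff group has no proper closed subgroup of finite index.

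The proof will proceed in four short steps. First, note that $\Gamma$ is closed in the finite-dimensional torus $\mathbb T^d \times \mathbb T^r$, so it is itself a compact abelian Lie group. Consequently its identity component $\Gamma_0$ is an open (hence closed) subgroup of finite index in $\Gamma$; in particular $\Gamma_0$ is a closed subgroup of $\mathbb T^d\times\mathbb T^r$, which is what we need in order to call $\Gamma_0$ a joining.

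Second, apply the continuous homomorphism $\pi_1 \colon \Gamma \to \mathbb T^d$. Since $\pi_1$ is surjective (by the joining hypothesis on $\Gamma$) and $[\Gamma:\Gamma_0]<\infty$, the image $\pi_1(\Gamma_0)$ is a closed subgroup of $\mathbb T^d$ of index at most $[\Gamma:\Gamma_0]$. Because $\mathbb T^d$ is connected, the only closed finite-index subgroup of $\mathbb T^d$ is $\mathbb T^d$ itself (any closed finite-index subgroup is open, so its complement is a union of finitely many open cosets, which would disconnect $\mathbb T^d$ unless the subgroup equals the whole group). Hence $\pi_1(\Gamma_0)=\mathbb T^d$. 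The identical argument with $\pi_2$ gives $\pi_2(\Gamma_0)=\mathbb T^r$.

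Combining the previous steps, $\Gamma_0$ is a closed subgroup of $\mathbb T^d\times\mathbb T^r$ whose projections onto the two factors are surjective, so $\Gamma_0$ is a joining of $\mathbb T^d$ with $\mathbb T^r$ by definition. There is no real obstacle here; the only point that requires any thought is the justification that a closed subgroup of a finite-dimensional torus has finitely many components, which can either be quoted from standard references (e.g.\ \cite{RudinGroupsBook}) or seen directly from the fact that any closed subgroup of $\mathbb T^d\times \mathbb T^r$ is a compact Lie group and thus has only finitely many components.
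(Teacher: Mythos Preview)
Your proof is correct and follows essentially the same route as the paper's own argument, which is already embedded in the statement of the observation: finite index of $\Gamma_0$ in $\Gamma$, hence finite index of the projected images, hence surjectivity by connectedness of the codomains. You have simply fleshed out each step with a line of justification (e.g., why a closed finite-index subgroup of a connected group must be the whole group), and the one implicit point---that $\pi_i(\Gamma_0)$ is closed---follows immediately from compactness of $\Gamma_0$.
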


If $G$ is a compact abelian group and $H$ is a closed subgroup, $m_{H}$ denotes Haar probability measure on $H$.  If $H'$ is a coset $H+t$ of $H$,  $m_{H'}$ denotes Haar measure on $H'$, i.e.~the measure given by $\int f\, dm_{H'}:= \int f(x+t)\, dm_H(x)$.

\begin{definition}\label{def:AffineJoining}
  If $\Gamma_0$ is a joining of $Z$ with $W$, $\Gamma_j, j\leq k$ is a collection of cosets of $\Gamma_0$, and $c_j \in [0,1]$ satisfy $\sum_{j} c_j=1$, we say that the $\Gamma_j$ and $c_j$ form an \emph{affine joining} $\Gamma$ of $Z$ with $W$, and define integration over $\Gamma$ by
  \[
  \int f \, dm_{\Gamma} := \sum_{j} c_j \int f\, dm_{\Gamma_j}.
  \]
\end{definition}
\noindent For example, $\Gamma_0 = \{(x,2x):x\in \mathbb T\}$, $\Gamma_1 = \{(x+\frac{1}{4},2x):x\in \mathbb T\} \subseteq \mathbb T\times \mathbb T$, $c_0 = \frac{1}{3}$, and $c_1 = \frac{2}{3}$ determine an affine joining $\Gamma$ of $\mathbb T$ with $\mathbb T$, and \[
\int f\, dm_{\Gamma} = \frac{1}{3}\int f(x,2x) \, dx + \frac{2}{3} \int f(x+\tfrac{1}{4},2x)\, dx.\]

  \section{Application of Kronecker's and Weyl's theorems}\label{sec:Weyl}

The limits of ergodic averages we consider will be computed as integrals over affine joinings.  To compute them explicitly, we need the following well known results of Kronecker and Weyl.

Given a compact abelian group $Z$ and $\alpha_1,\dots,\alpha_d\in Z$, we write $\langle \alpha_1,\dots, \alpha_d\rangle$ for the subgroup of $Z$ generated by these elements.  We write $\overline{\langle \alpha_1,\dots, \alpha_d\rangle}$ for its closure.

\begin{lemma}[Kronecker's criterion]\label{lem:KroneckerCrit} Let $\alpha_1,\dots,\alpha_d$ be elements of a compact abelian group $Z$.  Then $\overline{\langle \alpha_1,\dots, \alpha_d\rangle}=Z$ if and only if for every nontrivial character $\chi\in \widehat{Z}$, $\chi(\alpha_j)\neq 1$ for at least one of the $\alpha_j$.
\end{lemma}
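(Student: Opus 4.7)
The plan is a standard Pontryagin duality argument in two directions, and I would organize it as follows.

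For the forward implication, I would suppose $\overline{\langle \alpha_1,\dots,\alpha_d\rangle}=Z$ and argue by contrapositive. If there is a nontrivial $\chi\in\widehat{Z}$ with $\chi(\alpha_j)=1$ for every $j$, then $\chi$ vanishes (i.e.\ equals $1$) on the subgroup $\langle \alpha_1,\dots,\alpha_d\rangle$. By continuity of $\chi$, the set $\{z\in Z:\chi(z)=1\}$ is closed, so $\chi$ is identically $1$ on $\overline{\langle\alpha_1,\dots,\alpha_d\rangle}=Z$, contradicting nontriviality of $\chi$.

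For the reverse implication, set $H:=\overline{\langle\alpha_1,\dots,\alpha_d\rangle}$, and assume $H\neq Z$; I will produce a nontrivial character of $Z$ annihilating every $\alpha_j$. Since $H$ is closed in $Z$ and proper, the quotient $Z/H$ is a nontrivial compact abelian group. By Pontryagin duality (equivalently, by the fact that characters separate points on a compact abelian group), $Z/H$ carries a nontrivial continuous character $\psi\in\widehat{Z/H}$. Letting $q:Z\to Z/H$ denote the quotient homomorphism, the composition $\chi:=\psi\circ q\in\widehat{Z}$ is nontrivial (because $\psi$ is) and satisfies $\chi|_H\equiv 1$. In particular $\chi(\alpha_j)=1$ for every $j$, contradicting the hypothesis. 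Hence $H=Z$.

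The only nonroutine ingredient is the dual statement that a nontrivial compact abelian group admits a nontrivial character, which is exactly the Pontryagin duality fact I would invoke (or, if one prefers, the Peter--Weyl theorem specialized to compact abelian groups). Since this is standard and appears in \cite{RudinGroupsBook}, which is already cited in \S\ref{sec:Joinings}, no further elaboration should be required, and the total proof is only a few lines.
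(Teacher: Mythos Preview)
Your argument is correct and is the standard Pontryagin duality proof. Note that the paper does not actually supply a proof of this lemma: it is stated as the classical ``Kronecker's criterion'' and used without justification, so there is nothing to compare against beyond observing that your write-up is exactly the routine verification one would expect.
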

Weyl's theorem on uniform distribution of polynomials (\cite{Weyl1916}, or Theorem 3.2 of \cite{KuipersNiederreiter}) says that if $p(x)=c_mx^m + c_{m-1}x^{m-1} + \cdots + c_0$ is a polynomial with real coefficients and at least one of the $c_j$ with $j>0$ is irrational, then
\[
\lim_{N\to\infty} \frac{1}{N}\sum_{n=1}^N e(p(n)) = 0.
\]
As usual $e(t)$ denotes $\exp(2\pi i t)$.

\begin{lemma}\label{lem:Weyl}
 Let $Z$ be a compact abelian group, let $\alpha$, $\beta\in Z$, and let $\chi\in \widehat{Z}$ be such that $\chi(\alpha)$, $\chi(\beta)$ are not both roots of unity. Then $\lim_{N\to\infty} \frac{1}{N}\sum_{n=1}^N \chi(n\alpha + n^2\beta)=0$.
\end{lemma}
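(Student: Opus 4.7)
The plan is to reduce the statement directly to the version of Weyl's equidistribution theorem for polynomial sequences that is quoted immediately before the lemma. First I would identify the circle group $\mathcal S^1$ with $\mathbb R/\mathbb Z$ via the standard isomorphism $t\mapsto e(t)$, and write $\chi(\alpha)=e(a)$ and $\chi(\beta)=e(b)$ for unique $a,b\in[0,1)$. Since $\chi$ is a group homomorphism from $Z$ into $\mathcal S^1$ (so multiplication in $\mathcal S^1$ corresponds to addition in $\mathbb R/\mathbb Z$), we have
\[
\chi(n\alpha+n^2\beta)=\chi(\alpha)^{n}\chi(\beta)^{n^2}=e(an+bn^2).
\]
Thus the averages to be estimated become $\frac{1}{N}\sum_{n=1}^N e(p(n))$, where $p$ is the real polynomial $p(n)=bn^2+an$.

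Next I would translate the root-of-unity hypothesis into arithmetic language. An element $e(t)\in\mathcal S^1$ is a root of unity if and only if $t\in\mathbb Q$, so the assumption that $\chi(\alpha)$ and $\chi(\beta)$ are not both roots of unity is equivalent to saying that at least one of $a,b$ is irrational. In either case, the polynomial $p(n)=bn^2+an$ has at least one coefficient of a nonconstant term that is irrational (namely $b$ if $b$ is irrational, and $a$ otherwise). Consequently Weyl's theorem on uniform distribution of polynomials, as recalled just above the statement of the lemma, yields
\[
\lim_{N\to\infty}\frac{1}{N}\sum_{n=1}^N e(p(n))=0,
\]
which is the desired conclusion.

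There is essentially no obstacle in this argument: the result is nothing more than the specialization of Weyl's theorem to the exponential of a character, once one recognizes that the only content of the hypothesis is to guarantee that some nonconstant coefficient of the resulting real polynomial is irrational. The only point that needs care is the elementary but necessary identification between characters of $Z$ (taking values in the multiplicative circle $\mathcal S^1$) and exponentials of real numbers, so that Weyl's criterion in its usual additive form can be invoked.
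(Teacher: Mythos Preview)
Your argument is correct and is essentially identical to the paper's proof: both write $\chi(n\alpha+n^2\beta)=\chi(\alpha)^n\chi(\beta)^{n^2}=e(n\gamma_1+n^2\gamma_2)$ with at least one of $\gamma_1,\gamma_2$ irrational, and then invoke Weyl's theorem directly.
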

\begin{proof}
  Write $\chi(n\alpha + n^2\beta)$ as $\chi(\alpha)^n\chi(\beta)^{n^2} = e(n\gamma_1 + n^2\gamma_2)$, where at least one of $\gamma_1, \gamma_2\in [0,1)$ is irrational.  Weyl's theorem then implies the limit of the averages is $0$.
\end{proof}

  \begin{lemma}\label{lem:ConnectedWeyl} Let $Z$ be a compact abelian group with Haar probability measure $m$ and let $\alpha, \beta$ generate $Z$.

  \begin{enumerate}
  	
  \item[(i)] If $Z$ is connected, then for all continuous $f:Z\to \mathbb C$, we have
  \begin{equation}\label{eqn:PolyAverage}
  \lim_{N\to\infty} \frac{1}{N}\sum_{n=1}^N f(n\alpha + n^2\beta) = \int f\, dm.
  \end{equation}
  \item[(ii)]   If $Z$ has finitely many connected components $Z_j$, then the limit above can be written as $\sum c_j \int f\, dm_{Z_j}$ for some nonnegative $c_j$ with $\sum c_j=1$.
  \item[(iii)]  For fixed $\beta$, if $\alpha, \alpha'\in Z$ are such that $\overline{\langle \alpha \rangle} = \overline{\langle \alpha'\rangle}$ and $\overline{\langle \alpha \rangle}$ is connected, we have
      \begin{equation}\label{eqn:alphaalphaprime}
      \lim_{N\to\infty} \frac{1}{N}\sum_{n=1}^N f(n\alpha + n^2\beta) = \lim_{N\to\infty} \frac{1}{N}\sum_{n=1}^N f(n\alpha' + n^2\beta).
      \end{equation}

  \end{enumerate}
  \end{lemma}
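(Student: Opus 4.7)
My plan for proving Lemma \ref{lem:ConnectedWeyl} is to reduce each of the three parts to a character-by-character computation via Stone--Weierstrass, then apply Lemma \ref{lem:Weyl} together with the fact that the Pontryagin dual of a connected compact abelian group is torsion-free.

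For Part (i), I first reduce to the case $f=\chi$ for a character $\chi\in\widehat Z$. If $\chi$ is trivial both sides equal $1$; otherwise $\int\chi\, dm=0$ and I must show the Ces\`aro averages vanish. Since $Z$ is connected, $\widehat Z$ is torsion-free, so if $\chi(\alpha)$ were a root of unity then $\chi^k(\alpha)=1$ for some $k\geq 1$, and continuity together with $\overline{\langle\alpha\rangle}=Z$ would force $\chi^k\equiv 1$, contradicting torsion-freeness. The same argument applies to $\chi(\beta)$, so neither is a root of unity and Lemma \ref{lem:Weyl} supplies the vanishing.

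For Part (ii), let $Z_0$ denote the identity component, so the $Z_j$ are cosets of $Z_0$ and $Z/Z_0$ is finite. I again reduce to characters. If $\chi|_{Z_0}$ is nontrivial then $\chi$ has infinite order in $\widehat Z$ (because the torsion subgroup of $\widehat Z$ is exactly the annihilator of $Z_0$), so the Part (i) argument shows that $\chi(\alpha)$ and $\chi(\beta)$ are not roots of unity, and Lemma \ref{lem:Weyl} contributes $0$. If $\chi|_{Z_0}=1$, then $\chi$ descends to $Z/Z_0$, the sequence $\chi(n\alpha+n^2\beta)$ is periodic in $n$, and its Ces\`aro mean equals its average over one period. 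Setting $c_j$ equal to the natural density of $\{n\in\mathbb N:n\alpha+n^2\beta\in Z_j\}$, a short Fourier expansion on the finite group $Z/Z_0$ then rewrites the overall limit as $\sum_j c_j\int f\, dm_{Z_j}$; nonnegativity and $\sum_j c_j=1$ follow since the defining sets partition $\mathbb N$.

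For Part (iii), set $A=\overline{\langle\alpha\rangle}=\overline{\langle\alpha'\rangle}$ and reduce once more to characters $\chi\in\widehat Z$. If $\chi|_A$ is nontrivial, then because $A$ is connected and $\widehat A$ torsion-free, neither $\chi(\alpha)$ nor $\chi(\alpha')$ is a root of unity (same reasoning as Part (i) applied inside $A$), so Lemma \ref{lem:Weyl} yields $0$ for both sides of (\ref{eqn:alphaalphaprime}). If $\chi|_A$ is trivial, then $\chi(\alpha)=\chi(\alpha')=1$, hence $\chi(n\alpha+n^2\beta)=\chi(\beta)^{n^2}=\chi(n\alpha'+n^2\beta)$ term by term, and the two averages agree even before taking limits. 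The main obstacle I anticipate is the bookkeeping in Part (ii) that identifies the sum of periodic Fourier contributions with the coset integrals $\int f\, dm_{Z_j}$; Parts (i) and (iii) are nearly immediate consequences of the torsion-freeness observation together with Lemma \ref{lem:Weyl}.
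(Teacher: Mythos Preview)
Your overall strategy---reduce to characters via Stone--Weierstrass, invoke torsion-freeness of the dual of a connected group, then apply Lemma~\ref{lem:Weyl}---matches the paper's. Part (iii) is correct as written. However, Parts (i) and (ii) contain a genuine error stemming from a misreading of the hypothesis.

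The hypothesis ``$\alpha,\beta$ generate $Z$'' means that the closed subgroup generated by $\alpha$ and $\beta$ \emph{together} is $Z$; it does \emph{not} assert $\overline{\langle\alpha\rangle}=Z$. Your argument in Part (i) explicitly uses $\overline{\langle\alpha\rangle}=Z$ to conclude that $\chi(\alpha)$ is not a root of unity, and then claims the same for $\chi(\beta)$. This conclusion is false in general: take $Z=\mathbb T^2$, $\alpha=(\sqrt 2,0)$, $\beta=(0,\sqrt 3)$, and $\chi(x,y)=e(y)$; then $\alpha,\beta$ together generate $\mathbb T^2$, $\chi$ is nontrivial, yet $\chi(\alpha)=1$. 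The same faulty step is carried into your Part (ii) when $\chi|_{Z_0}$ is nontrivial.

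The repair is immediate and is exactly what the paper does: show instead that $\chi(\alpha)$ and $\chi(\beta)$ are \emph{not both} roots of unity. Indeed, if $\chi(\alpha)^p=\chi(\beta)^q=1$ then $\chi^{pq}$ annihilates both $\alpha$ and $\beta$, hence annihilates the closed subgroup they generate, namely $Z$; torsion-freeness of $\widehat Z$ (from connectedness) then forces $\chi$ trivial. This weaker conclusion is precisely what Lemma~\ref{lem:Weyl} requires. With this correction your Parts (i) and (ii) go through; your Part (iii) was already fine because there you genuinely have $\overline{\langle\alpha\rangle}=A$. As a minor remark, your packaging of Part (ii) via densities $c_j$ and Fourier inversion on $Z/Z_0$ is a clean alternative to the paper's more explicit decomposition into residue classes modulo the index $[\,Z:Z_0\,]$; both arrive at the same formula.
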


  \begin{proof}
(i) Approximating $f$ by trigonometric polynomials, it suffices to prove the special case where $f$ is a nontrivial character $\chi\in \widehat{Z}$. Under this assumption we will show that the limit of the averages in (\ref{eqn:PolyAverage}) is $0$.  In this case $f(n\alpha+n^2\beta)=\chi(\alpha)^n\chi(\beta)^{n^2}$.  Connectedness of $Z$ implies $\chi^n\not\equiv 1$ for all $n\in \mathbb N$.  Since $\alpha$ and $\beta$ generate $Z$, Lemma \ref{lem:KroneckerCrit} implies $\chi(\alpha)^n\neq 1$ or $\chi(\beta)^n\neq 1$ for all $n\in \mathbb N$.  Lemma \ref{lem:Weyl} then implies $\lim_{N\to\infty} \frac{1}{N}\sum_{n=1}^N \chi(n\alpha + n^2\beta)=0$.

\smallskip

\noindent (ii)  Assuming $Z$ has finitely many connected components $Z_j$ and identity component $Z_0$, let $A_j:=\{n\in \mathbb Z:n\alpha+n^2\beta\in Z_j\}$, and let $p$ be the index of $Z_0$ in $Z$.  We claim that each $A_j$ is a union of infinite arithmetic progressions of the form $p\mathbb Z+q$.  To prove this it suffices to prove $A_j+p = A_j$.  To do so, observe that $p\alpha, p\beta \in Z_0$.  We will show that if $n\in A_j$, then $n-p\in A_j$; in other words, if $n\alpha + n^2\beta\in Z_j$, then $(n+p)\alpha + (n+p)^2\beta \in Z_j$.  Now fix $n,j$ with $n\alpha+n^2\beta\in Z_j$.  Then
\[(n+p)\alpha + (n+p)^2\beta = n\alpha +n^2\beta + p\alpha + (2n+p)p\beta \in Z_j+Z_0=Z_j,\] as desired.  Similarly, we can show that if $n\in A_j$, then $n+p\in A_j$, so that $A_j+p=A_j$.

Fix $q\in \mathbb Z$.  We claim $\alpha_0:=(1+2q)p\alpha$ and $\beta_0:=p^2\beta$ generate $Z_0$.  To see this, note that the closed subgroup they generate is contained in $Z_0$, and has finite index in the subgroup generated by $\alpha$ and $\beta$, while  $Z_0$ has no proper finite index closed subgroup.

We decompose the limit in (\ref{eqn:PolyAverage}) as
\begin{equation}\label{eqn:pqRewrite}
\frac{1}{p}\sum_{q=0}^{p-1} \lim_{N\to\infty} \frac{1}{N}\sum_{n=1}^N f((pn+q)\alpha + (pn+q)^2\beta).
\end{equation}
Thinking of $q$ as fixed, so that $(pn+q)\alpha + (pn+q)^2\beta\in Z_i$ for some $i$, it suffices to prove that the limit is $0$ when  $f$ is a character $\chi$ of $Z$ which is not constant on $Z_i$ (and therefore not constant on $Z_0$). We fix such a $\chi$ and write
\begin{align*}
\chi((pn+q)\alpha + (pn+q)^2\beta) &= \chi(q\alpha+q^2\beta)\chi(n(1+2q)p\alpha+n^2p^2\beta)\\
&=\chi(q\alpha+q^2\beta)\chi(n\alpha_0+n^2\beta_0).
\end{align*} Since $\alpha_0$ and $\beta_0$ generate $Z_0$, we have
\[	
\lim_{N\to\infty} \frac{1}{N}\sum_{n=1}^N \chi((pn+q)\alpha + (pn+q)^2\beta) = \chi(q\alpha+q^2\beta) \lim_{N\to\infty}  \frac{1}{N}\sum_{n=1}^N \chi(n\alpha_0+n^2\beta_0)	= 0,
\]
by part (i). This shows that the averages in (\ref{eqn:pqRewrite}) converge to $0$ when $f$ is a character which is not constant on $Z_i$, completing the proof of (ii).

To prove (iii), fix $\alpha,\alpha', \beta\in Z$ and assume $H:=\overline{\langle \alpha\rangle}=\overline{\langle \alpha'\rangle}$ is a connected subgroup of $Z$. It suffices to prove that (\ref{eqn:alphaalphaprime}) holds when $f$ is a character $\chi$ of $Z$.  If $\chi(H)=\{1\}$, then $\chi(n\alpha+n^2\beta)=\chi(n\alpha'+n^2\beta)=\chi(n^2\beta$), so the averages in (\ref{eqn:alphaalphaprime}) are equal.  Now assume $\chi(H)\neq \{1\}$.  We will prove that both sides of (\ref{eqn:alphaalphaprime}) are $0$. First note that $\chi(H)=\{z\in \mathbb C:|z|=1\}$, since $H$ is compact and connected, and its image under $\chi$ is a nontrivial compact connected subgroup of $\mathcal S^1$.  Since $\alpha$ and $\alpha'$ generate dense subgroups of $H$, $\chi(\alpha)$ and $\chi(\alpha')$ generate dense subgroups of $\chi(H)$, hence they are both not roots of unity.  Lemma \ref{lem:Weyl} then implies both limits in (iii) are $0$. \end{proof}

\begin{remark}
  Part (iii) of Lemma \ref{lem:ConnectedWeyl} says that when $\beta$ is fixed and $\overline{\langle \alpha \rangle} = \overline{\langle \alpha'\rangle}$ is connected, the $c_j$ provided by part (ii) do not change when $\alpha'$ replaces $\alpha$.
\end{remark}

\section{The Roth integral and Fourier coefficients}\label{sec:RothIntegral}
Let $Z$ be a compact abelian group with Haar probability measure $m$ and $f:Z\to [0,1]$.  We examine the multilinear form which ``counts 3-term arithmetic progressions'' in the support of $f$:
\[
I_3(f):=\int f(z)f(z+t)f(z+2t)\, dm(z)\, dm(t)
\]
Roth \cite{RothFrench,RothEnglish} (cf.~\cite{GowersSz}) and Furstenberg \cite{F77} observed that if $|\hat{f}(\chi)|$ is small for all nontrivial $\chi\in \widehat{Z}$  then $I_3(f)\approx \bigl(\int f\, dm\bigr)^3$.  Lemma \ref{lem:SmallFourierToW} is a minor generalization of this fact; to state it we first introduce some notation.

Let $W=Z/K$ be a quotient of $Z$ by a closed subgroup $K$.  For $f\in L^2(m)$, let
\begin{equation}\label{eqn:PrimeDef}
  f'(z):= \int_K f(z+y) \, dm_K(y).
\end{equation}
Let $\pi : Z\to W$ be the quotient map, and identify $\widehat{W}$ with  $\{\chi\circ \pi: \chi \in \widehat{W}\}\subseteq \widehat{Z}$. We have
\begin{equation}	\label{eqn:f'chi1}\widehat{f'}(\chi)=\begin{cases}
                	                                       \hat{f}(\chi), & \mbox{if } \chi\in\widehat{W} \\
                	                                       0, & \mbox{if } \chi \notin\widehat{W}.
                	                                     \end{cases}
\end{equation}
To see this, note that for $\chi\in \widehat{W}$, we have $\chi(z+y)=\chi(z)$ for all $y\in K$, so

\begin{align*}
  \hat{f}(\chi) = \int f(z)\overline{\chi(z)}\, dm(z)  &= \int \int f(z+y)\overline{\chi(z+y)} \, dm(z) \, dm_K(y)\\
  &= \int \int f(z+y) \, dm_K(y) \overline{\chi(z)}\, dm(z)\\
  &= \int f' \overline{\chi}\, dm\\
  &=\widehat{f'}(\chi).
\end{align*}
Now for $\chi\notin \widehat{W}$, there exists $t\in K$ such that $\chi(t)\neq 1$. Since $f'(z+s)=f'(z)$ for all $s\in K$, we have
\begin{align*}
 \widehat{f'}(\chi) =	\int f'(z) \overline{\chi(z)} \, dm(z) &= \int f'(z+t)\overline{\chi(z+t)} \, dm(z)\\
	&= \int f'(z)\overline{\chi(z+t)}\, dm(z)\\
	&= \overline{\chi(t)}\int f'(z)\overline{\chi(z)}\, dm(z)\\
	&= \overline{\chi(t)}\widehat{f'}(\chi).
\end{align*}
So $\widehat{f'}(\chi) = \overline{\chi(t)}\widehat{f'}(\chi)$, which is possible only if $\widehat{f'}(\chi)=0$.

Below we will use $dz$ and $dt$ to indicate integrate over all of $Z$ with respect to the displayed variable.  Integration over $K$ will be indicated by $\,dm_K$.
\begin{lemma}\label{lem:SmallFourierToW}
With $Z$, $K$, and $W$ as defined above, let $f_0, f_1, f_2 \in L^\infty(m)$, and write
\begin{align*}
	I &:= \int \int f_0(z)f_1(z+t)f_2(z+2t)\, dz \, dt\\
	I_W&:= \int \int f_0'(z)f_1'(z+t)f_2'(z+2t) \, dz \, dt.
\end{align*}
 Suppose $|\hat{f}_2(\chi)|\leq \kappa$ for all $\chi \in \widehat{Z} \setminus \widehat{W}$.  Assuming the map $\chi\mapsto \chi^2$ is injective on $\widehat{Z}$, we have
\begin{equation}\label{eqn:IIW}
 |I-I_W| \leq \kappa \|f_0\|_{L^2(m)}\|f_1\|_{L^2(m)}.
\end{equation}
\end{lemma}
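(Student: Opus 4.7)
The plan is to expand both $I$ and $I_W$ as sums over $\widehat{Z}$ via the standard Fourier identity for triple correlations, observe that the summands indexed by $\widehat{W}$ cancel, and then bound the remaining tail by Cauchy--Schwarz together with the hypothesis on $\hat f_2$ and the injectivity of squaring.

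First I would expand $f_i(z) = \sum_{\chi}\hat f_i(\chi)\chi(z)$ for $i=0,1,2$ and substitute into $f_0(z)f_1(z+t)f_2(z+2t)$. Integration in $z$ forces $\chi_0\chi_1\chi_2=1$ (where $\chi_i$ indexes $f_i$) and integration in $t$ forces $\chi_1\chi_2^2=1$; solving gives $\chi_1=\chi_2^{-2}$ and $\chi_0=\chi_2$, so
\[
I \;=\; \sum_{\chi\in\widehat{Z}} \hat f_0(\chi)\,\hat f_1(\chi^{-2})\,\hat f_2(\chi),
\]
and the same identity holds for $I_W$ with $\widehat{f_i'}$ in place of $\hat f_i$.

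Next, using the description of $\widehat{f'}$ proved just before the statement of the lemma (namely $\widehat{f_i'}(\chi)=\hat f_i(\chi)$ for $\chi\in\widehat W$ and $0$ otherwise), and the fact that $\widehat W\subseteq\widehat Z$ is a subgroup, so $\chi\in\widehat W$ forces $\chi^{-2}\in\widehat W$: for $\chi\in\widehat W$ all three primed Fourier coefficients agree with the unprimed ones and the $\chi$-summands of $I$ and $I_W$ coincide, while for $\chi\notin\widehat W$ the primed summand vanishes because $\widehat{f_0'}(\chi)=0$. Hence
\[
I - I_W \;=\; \sum_{\chi\notin\widehat W} \hat f_0(\chi)\,\hat f_1(\chi^{-2})\,\hat f_2(\chi).
\]
Applying $|\hat f_2(\chi)|\le\kappa$ on $\widehat Z\setminus\widehat W$ and then Cauchy--Schwarz gives
\[
|I-I_W| \;\le\; \kappa\Bigl(\sum_\chi |\hat f_0(\chi)|^2\Bigr)^{1/2}\Bigl(\sum_{\chi\notin\widehat W} |\hat f_1(\chi^{-2})|^2\Bigr)^{1/2}.
\]
The first factor equals $\|f_0\|_{L^2(m)}$ by Plancherel. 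For the second, the reindexing $\psi=\chi^{-2}$ is \emph{injective} on $\widehat Z$ precisely because $\chi\mapsto\chi^2$ is assumed injective there, so the $\psi$'s are distinct and the sum is bounded by $\sum_\psi|\hat f_1(\psi)|^2=\|f_1\|_{L^2(m)}^2$, yielding the desired inequality.

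The only step that really uses the injectivity hypothesis is this final reindexing: without it, some $\psi\in\widehat Z$ could arise as $\chi^{-2}$ for several distinct $\chi\notin\widehat W$, and the rearranged sum could in principle exceed $\|f_1\|^2$. Everything else is routine Fourier bookkeeping once the triple-correlation identity $I=\sum_\chi\hat f_0(\chi)\hat f_1(\chi^{-2})\hat f_2(\chi)$ is in hand and the subgroup property of $\widehat W$ is used to organize the cancellation.
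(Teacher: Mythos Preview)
Your proof is correct and follows essentially the same approach as the paper: expand the triple correlation via Fourier to obtain $I=\sum_{\chi}\hat f_0(\chi)\hat f_1(\chi^{-2})\hat f_2(\chi)$, identify $I-I_W$ as the tail over $\chi\notin\widehat W$, and bound by Cauchy--Schwarz using injectivity of $\chi\mapsto\chi^2$. The only cosmetic difference is that the paper introduces an intermediate quantity $I_2$ (replacing just $f_2$ by $f_2'$) and proves $I_2=I_W$ by a direct change-of-variables argument, whereas you compute $I_W$ via Fourier and use that $\widehat W$ is a subgroup to match summands; your route is slightly more streamlined.
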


\begin{proof}
	Let $I_2= \int \int f_0(z)f_1(z+t)f_2'(z+2t)\, dz\, dt$.  We will prove that  \begin{equation}
		\label{eqn:II2}|I-I_2|\leq \kappa\|f_0\|_{L^2(m)}\|f_1\|_{L^2(m)}
	\end{equation}
and that $I_2 = I_W$.  We first prove the special case where each $f_i$ is a trigonometric polynomial. Expanding each $f_i$ as $\sum_{\chi \in \widehat{Z}} \hat{f}_i(\chi)\chi$ and simplifying, we get
\begin{align*}
	I &= \sum_{\chi, \psi,\tau \in \widehat{Z}} \int \hat{f}_0(\chi)\chi(z) \hat{f}_1(\psi)\psi(z+t) \hat{f}_2(\tau)\tau(z+2t)\, dz \, dt\\
	&= \sum_{\chi, \psi, \tau \in \widehat{Z}}  \int \hat{f}_0(\chi) \hat{f}_1(\psi) \hat{f}_2(\tau)  \chi\psi\tau(z) \psi\tau^2(t)\, dz\, dt\\
	&= \sum_{\chi,\psi,\tau\in\widehat{Z}} \hat{f}_0(\chi) \hat{f}_1(\psi) \hat{f}_2(\tau) \int \chi\psi\tau(z)\, dz \int \psi \tau^2(t)\, dt.
\end{align*}
At least one of  $\int \psi \tau^2(t)\, dt$ or $\int \chi\psi\tau(z)\, dz$ is zero unless $\psi\tau^2$ and $\chi\psi\tau$ are both trivial; this triviality occurs exactly when $\psi = \tau^{-2}$ and $\chi=\tau$.  The sum in the last line above may therefore be restricted to values of $\chi,\psi,$ and $\tau$ satisfying these identities, and we get
\begin{align*}
I=\sum_{\tau \in \widehat{Z}} \hat{f}_0(\tau)\hat{f}_1(\tau^{-2})\hat{f}_2(\tau).
\end{align*}
As noted in (\ref{eqn:f'chi1}), $\hat{f}_2'(\tau) = \hat{f}_2(\tau)$ for $\tau \in \widehat{W}$ and $\widehat{f'}_2(\tau)=0$ for $\tau \notin \widehat{Z}\setminus \widehat{W},$ so
\[
I_2 = \sum_{\tau \in \widehat{W}} \hat{f}_0(\tau)\hat{f}_1(\tau^{-2})\hat{f}_2(\tau).
\]	
Then
\begin{align*}
	|I - I_2| & = \Bigl|\sum_{\tau \notin \widehat{W}} \hat{f}_0(\tau)\hat{f}_1(\tau^{-2})\hat{f}_2(\tau)\Bigr|\\
	&\leq \sum_{\tau\in \widehat{Z}} \kappa |\hat{f}_0(\tau) \hat{f}_1(\tau^{-2  })|  && \text{since } |\hat{f}_2(\tau)|<\kappa \text{ for } \tau\notin \widehat{W}\\
	&\leq \kappa \|\hat{f}_0\|_{l^2}\|\hat{f}_1\|_{l^2}  && \text{Cauchy-Schwarz, assuming } \tau\mapsto \tau^2 \text{ is injective}\\
	&= \kappa \|f_0\|_{L^2(m)}\|f_1\|_{L^2(m)}, && \text{Plancherel}
\end{align*}
where $\|\cdot\|_{l^2}$ denotes the $l^2$ norm for functions on $\widehat{Z}$.

To prove $I_2=I_W$, replace $t$ with $t+s$ in the $dt$ integral in $I_2$, then integrate $s$ over $K$, using the fact that $f_2'(z+s)=f_2(z)$ for all $z\in Z$, $s\in K$:
\begin{align*}
	I_2 &= \int \int f_0(z) f_1(z+t)f_2'(z+2t) \, dz \, dt\\
	&= \int_K \int \int f_0(z) f_1(z+t+s) f_2'(z+2t+2s) \, dz \, dt\, dm_K(s)\\
	&= \int \int f_0(z) \int_K f_1(z+t+s)\, dm_K(s)\,  f_2'(z+2t) \, dz \, dt \\
	&= \int \int f_0(z) f_1'(z+t) f_2'(z+2t) \, dz\, dt.
\end{align*}
A similar manipulation, replacing $z$ with $z+s$, lets us replace $f_0$ with $f_0'$, completing the proof that $I_2=I_W$, and hence $|I - I_W|\leq \kappa \|f_0\|_{L^2(m)}\|f_1\|_{L^2(m)}$.  This proves (\ref{eqn:II2}).
\end{proof}

\section{Annihilating characters}\label{sec:Annihilating}

Let $d,r\in\mathbb N$, let $f:\mathbb T^d\times \mathbb T^d\to \mathbb C$, $g:\mathbb T^r\to \mathbb C$, and let $\Gamma$ be an affine joining of $\mathbb T^d$ with $\mathbb T^r$ (Definition \ref{def:AffineJoining}).  The limits we compute in the proof of Lemma \ref{lem:BHKroneckerAlternative} will contain functions of the form $f*_{\Gamma} g:\mathbb T^d\times \mathbb T^d\to \mathbb C$, defined by
\begin{equation}\label{def:FstarG}
f*_\Gamma g(x,y):=\int f(x,y+2\pi_1(w))g(\pi_2(w))\, dm_{\Gamma}(w).
\end{equation}  The next two lemmas let us bound the Fourier coefficients of $f*_{\Gamma} g$.  We use the abbreviation $w_i$ for $\pi_i(w)$ introduced in Notation \ref{not:Indices}.

\begin{lemma}\label{lem:BHannihilates}
	Let $k<r\in \mathbb N$ and $U\subseteq \mathbb T^r$ be an approximate Hamming ball of radius $(k,\eta)$, $\eta>0$.  Then
	
	\begin{enumerate}
	\item[(i)] If $Z$ is a compact abelian group, $\pi:Z\to \mathbb T^r $ is a continuous homomorphism, and $\chi_1,\dots,\chi_k\in \widehat{Z}$ are nontrivial then there is a cylinder function $g$ subordinate to $U$ such that $\widehat{g_{s}\circ \pi}(\chi_j)=0$ for each $j$ and each translate $g_{s}$ of $g$.
	
	\item[(ii)] If $\Gamma$ is an affine joining of $\mathbb T^d$ with $\mathbb T^r$ and $\chi_1,\dots,\chi_k\in \widehat{\mathbb T}^d$ are nontrivial, then there is a cylinder function $g$ subordinate to $U$ such that $\int \chi_j(w_1) g(w_2)\, dm_{\Gamma}(w) = 0$ for each $j\leq k$.

	\end{enumerate}
\end{lemma}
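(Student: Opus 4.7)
The plan is to prove part (i) by generalizing the cylinder-direction argument of Lemma~\ref{lem:VAnnihilates}(i), and then derive part (ii) by decomposing the integral over $\Gamma$ into integrals over the cosets of its underlying subgroup and invoking (i) in a compact-group setting.

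For (i), I would write the continuous homomorphism $\pi:Z\to\mathbb T^r$ in coordinates as $\pi=(\pi^{(1)},\dots,\pi^{(r)})$, so each $\pi^{(l)}\in\widehat Z$. For each nontrivial $\chi_j\in\widehat Z$, distinguish two cases. If $\chi_j|_{\ker\pi}$ is nontrivial, then for any cylinder function $g$ and any $s\in\mathbb T^r$ the integral $\widehat{g_s\circ\pi}(\chi_j)$ vanishes automatically by Fubini over the fibers of $\pi$: the factor $g(\pi(z+k)-s)=g(\pi(z)-s)$ is constant on those fibers, while $\int_{\ker\pi}\overline{\chi_j}\,dm_{\ker\pi}=0$. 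Otherwise $\chi_j$ factors through $\pi$, placing $\chi_j$ in the subgroup $\langle\pi^{(1)},\dots,\pi^{(r)}\rangle\subseteq\widehat Z$. For each such $j$ I would select an index $l_j\in\{1,\dots,r\}$ with $\chi_j\notin\langle\pi^{(i)}:i\ne l_j\rangle$; then let $J$ be a $k$-element subset of $\{1,\dots,r\}$ containing $\{l_1,\dots,l_k\}$, let $I=\{1,\dots,r\}\setminus J$, and take $g=\frac{1}{m(V)}1_V$ with $V=V_{I,\mb y,\eta}$. Since $g$ is independent of the $J$-coordinates, $g\circ\pi$ is invariant under translation in $Z$ by any element of $L:=\ker\pi_I$ (where $\pi_I:=(\pi^{(i)})_{i\in I}$); and by the choice of $l_j$, the restriction $\chi_j|_L$ is a nontrivial character of $L$. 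Splitting $\int_Z$ as an iterated integral over $Z/L$ and $L$, the inner integral contributes $\int_L\overline{\chi_j(h)}\,dm_L(h)=0$, so $\widehat{g_s\circ\pi}(\chi_j)=0$ for every $s\in\mathbb T^r$.

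For (ii), let $\Gamma_0$ be the subgroup underlying the affine joining $\Gamma$, with cosets $\Gamma_\alpha=\Gamma_0+u^{(\alpha)}$ and weights $c_\alpha$ as in Definition~\ref{def:AffineJoining}. Since $\Gamma_0$ is a joining of $\mathbb T^d$ with $\mathbb T^r$, the projection $\pi_1|_{\Gamma_0}$ is surjective, so $\tilde\chi_j:=\chi_j\circ\pi_1|_{\Gamma_0}$ is a nontrivial character of $\Gamma_0$ for each $j$. I would apply part (i) with $Z:=\Gamma_0$, $\pi:=\pi_2|_{\Gamma_0}:\Gamma_0\to\mathbb T^r$, and the characters $\tilde\chi_1,\dots,\tilde\chi_k$, producing a single cylinder function $g$ subordinate to $U$ for which $\widehat{g_s\circ\pi}(\tilde\chi_j)=0$ for every $s\in\mathbb T^r$. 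The integral $\int\chi_j(w_1)g(w_2)\,dm_\Gamma(w)$ then breaks up as $\sum_\alpha c_\alpha\chi_j(u^{(\alpha)}_1)\int_{\Gamma_0}\tilde\chi_j(v)g(\pi(v)+u^{(\alpha)}_2)\,dm_{\Gamma_0}(v)$, and each $\Gamma_0$-integral is, up to complex conjugation, the Fourier coefficient $\widehat{g_{-u^{(\alpha)}_2}\circ\pi}(\overline{\tilde\chi_j})=0$ supplied by (i), so the whole sum vanishes.

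The principal difficulty is the index-selection in part (i): extracting, from the $k$ characters $\chi_j$ that factor through $\pi$, indices $l_j\in\{1,\dots,r\}$ with $\chi_j\notin\langle\pi^{(i)}:i\ne l_j\rangle$, and assembling them into a set of size at most $k$. This is the structural analogue of choosing the free coordinates in the proof of Lemma~\ref{lem:VAnnihilates}(i), and it requires controlling the interplay between $\widehat Z$ and the image subgroup $\hat\pi(\widehat{\mathbb T^r})$; once those indices are in hand, Fubini, the cylinder structure, and the algebra of joinings deliver the two conclusions with no further work.
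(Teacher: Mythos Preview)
Your plan for part (ii) is essentially identical to the paper's: expand the affine-joining integral as a convex combination over cosets of $\Gamma_0$, pull each $\chi_j$ back to a nontrivial character $\tilde\chi_j=\chi_j\circ\pi_1|_{\Gamma_0}$ of $\Gamma_0$ (surjectivity of $\pi_1|_{\Gamma_0}$ is exactly what makes $\tilde\chi_j$ nontrivial), and then apply part (i) with $Z=\Gamma_0$ and $\pi=\pi_2|_{\Gamma_0}$.

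For part (i) your case split on whether $\chi_j|_{\ker\pi}$ is trivial, and your Fubini argument in the nontrivial-on-kernel case, match the paper. The one place you diverge is in the ``factors through $\pi$'' case, and this is where the difficulty you flag is self-inflicted. The paper does not try to select indices inside $\widehat Z$; instead it lifts each such $\chi_j$ to a nontrivial $\chi_j'\in\widehat{\mathbb T^r}$ via $\chi_j=\chi_j'\circ\pi$, and then invokes Lemma~\ref{lem:VAnnihilates}(i) directly on the $\chi_j'$. Since the coordinate characters of $\mathbb T^r$ are automatically free generators of $\widehat{\mathbb T^r}\cong\mathbb Z^r$, the index selection there is exactly the one already done in Lemma~\ref{lem:VAnnihilates}, and there is nothing further to verify. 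Your route also works, but only after you note that $\pi$ is surjective in the application (because $\Gamma_0$ is a joining): surjectivity makes the dual map $\hat\pi:\widehat{\mathbb T^r}\to\widehat Z$ injective, so the $e\circ\pi^{(l)}$ freely generate a copy of $\mathbb Z^r$ in $\widehat Z$ and your $l_j$ exist for the obvious reason. Without surjectivity the $\pi^{(l)}$ can satisfy relations and your selection can genuinely fail (take $Z=\mathbb T$, $\pi(z)=(z,z)$, $\chi_1=e(z)$). The paper's integral identity $\int_Z (g_s\circ\pi)\overline{\chi_j}\,dm_Z=\int_{\mathbb T^r} g_s\overline{\chi_j'}\,dm_{\mathbb T^r}$ also tacitly uses surjectivity (the pushforward of Haar measure under $\pi$ is Haar on $\mathbb T^r$ only then), so this is a shared hypothesis, not a defect peculiar to your argument.
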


\begin{proof} (i)	Let $\chi_j \in \widehat{Z}$ for $j\leq k$, and let $K$ be the kernel of $\pi$.  We first consider those $\chi_j$ where $\chi_j|_K$ is constant.  In this case, $\chi_j$ can be written as $\chi_j'\circ \pi$, where $\chi_j'\in \widehat{\mathbb T}^r$, and $\int (g_{s}\circ \pi)\, \overline{\chi}_j\, dm$ can be written as
\[
\int  g_{s}\, \overline{\chi}_j'\, dm_{\mathbb T^r}.
\] So choose $g$ by Lemma \ref{lem:VAnnihilates} to make these integrals vanish for such $\chi_j$.  For those $j$ where $\chi_j|_K$ is not constant, write $\int_Z f(z)\, dz$ as $\int_{Z} \int_{K} f(z+t) dm_K(t)\, dm(z).$  Then
	\begin{align*}
	\widehat{g_s\circ \pi}(\chi_j)=\int  g_{s}(\pi(z)) \overline{\chi_j(z)}\, dz &= \int \int g_{s}(\pi(z+t)) \overline{\chi_j(z+t) }\, dm_K(t)\, dz \\
	&= \int  g_{s}(\pi(z))\overline{\chi_j(z)} \, dz\, \int_{K} \overline{\chi_j(t)}\, dm_K(t) \\
	&=0,
	\end{align*}
	where the last line follows from the fact that $\chi_j|_K$ is a nontrivial character of $K$.
	
\medskip

\noindent (ii)  Since $\Gamma$ is an affine joining of $\mathbb T^d$ with $\mathbb T^r$, by definition, there is a joining $\Gamma_0$ so that the integral over $\Gamma$ is a convex combination of integrals over translates of $\Gamma_0$.  To prove (ii) it therefore suffices to find a $g$ subordinate to $U$ so that $\int \chi(w_1)g(w_2)\, dm_{\Gamma+t}(w)=0$ for every $t\in \mathbb T^{d}\times \mathbb T^{r}$.  We will use the identity
\begin{equation}\label{eqn:TranslateGammaStar}
\int \chi(w_1)g(w_2)\, dm_{\Gamma+t}(w) = \chi(\pi_1(t)) \int_{\Gamma_0} \chi(w_1)g_{-\pi_2(t)}(w_2)\, dm_{\Gamma_0}(w),
	\end{equation}which follows from the manipulations
\begin{align*}
\int_{\Gamma_0+t} \chi(w_1)g(w_2)\,  dm_{\Gamma_0+t}(w) &= \int_{\Gamma_0} \chi(\pi_1(w+t))g(\pi_2(w+t)) \, dm_{\Gamma_0}(w)\\
&= \int_{\Gamma_0} \chi(\pi_1(t)) \chi(\pi_1(w))g(\pi_2(w)+\pi_2(t))\, dm_{\Gamma_0}(w)\\
&= \chi(\pi_1(t)) \int_{\Gamma_0} \chi(w_1)g_{-\pi_2(t)}(w_2)\, dm_{\Gamma_0}(w).
\end{align*}
 We can consider the functions $z\mapsto \chi_j(\pi_1(z))$ as characters $\tilde{\chi}_j$ $\Gamma_0$.  These characters are nontrivial since $\pi_1:\Gamma_0\to \mathbb T^d$ is surjective, so we can apply Part (i) of the present lemma (with $\Gamma_0$ in place of $Z$ and $\pi_2$ in place of $\pi$) to find $g$ subordinate to $U$ so that $\int_{\Gamma_0} \chi_j(\pi_1(w)) g_{s}(\pi_2(w))\, dm_{\Gamma_0}(w)=: \widehat{g_s\circ \pi_2}(\tilde{\chi}_j)=0$ for every translate $g_{s}$ of $g$ and every $j\leq k$.  In light of (\ref{eqn:TranslateGammaStar}), this proves Part (ii).
\end{proof}

The expression $f*_{\Gamma} g$ in the next lemma is defined in (\ref{def:FstarG}); for $h:\mathbb T^d\times\mathbb T^d\to \mathbb C$, $h':\mathbb T^d\to \mathbb C$ is defined as $h'(x):=\int_{\mathbb T^d} h(x,y) \, dy$.

\begin{lemma}\label{lem:BHconvolvesToUniform}
	Let $k,d,r\in \mathbb N$, and let $\Gamma$ be an affine joining of $\mathbb T^d$ with $\mathbb T^r$. Let $U\subseteq \mathbb T^r$ be an approximate Hamming ball of radius $(k,\eta)$ for some $\eta>0$.
	
	Let $f:\mathbb T^d\times\mathbb T^d\to [0,1]$.  Then there is a cylinder function $g$ subordinate to $U$ such that
	\begin{align}\label{eqn:chi123bound}
		|\widehat{f*_{\Gamma} g}(\chi,\psi)|&< k^{-1/2} \qquad \text{whenever $\psi$ is nontrivial},	\\
\label{eqn:SamePrime}
    	(f*_{\Gamma} g)' &= f'.
   	\end{align}
\end{lemma}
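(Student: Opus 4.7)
The plan is to reduce the lemma to an application of Lemma \ref{lem:BHannihilates}(ii) after first obtaining a clean factorization of the Fourier coefficients of $f *_{\Gamma} g$. Expanding the definition and substituting $u = y + 2w_1$ in the inner $dy$-integral, while using the identity $\psi(2w_1) = \psi^2(w_1)$ for characters of $\mathbb T^d$, should yield
\[
\widehat{f *_{\Gamma} g}(\chi,\psi) \;=\; \widehat{f}(\chi,\psi)\cdot G_g(\psi^2), \qquad G_g(\tilde\psi) := \int \tilde\psi(w_1)\, g(w_2)\, dm_{\Gamma}(w).
\]
This identity isolates the dependence on $g$ inside a single scalar and is the only place where the structure of the affine joining $\Gamma$ enters.

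For (\ref{eqn:SamePrime}), I would specialize the identity to $\psi = 1$. Since $\pi_2 : \Gamma \to \mathbb T^r$ is surjective, the pushforward of $m_{\Gamma}$ under $\pi_2$ is Haar measure on $\mathbb T^r$, so $G_g(1) = \int g\, dm_{\mathbb T^r} = 1$ because $g$ is a cylinder function. Thus $\widehat{f *_{\Gamma} g}(\chi,1) = \widehat{f}(\chi,1) = \widehat{f'}(\chi)$ for every $\chi \in \widehat{\mathbb T}^d$, which by Fourier uniqueness gives $(f *_{\Gamma} g)' = f'$; alternatively one can Fubini the $w$- and $y$-integrals in the definition of $(f *_{\Gamma} g)'$ and use that $\int g\, dm_{\mathbb T^r}=1$ directly.

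For (\ref{eqn:chi123bound}), I would first apply Lemma \ref{lem:NumberOfLargeCoefficients} to $f \in L^2(\mathbb T^d \times \mathbb T^d)$. Since $\|f\| \leq 1$, there are at most $k$ pairs $(\chi_j,\psi_j)$ with $|\widehat{f}(\chi_j,\psi_j)| \geq k^{-1/2}$, while $|\widehat{f}(\chi,\psi)| < k^{-1/2}$ for all other $(\chi,\psi)$. Let $\Psi$ collect the distinct characters $\psi_j^2$ arising from those pairs for which $\psi_j$ is nontrivial. Since $\widehat{\mathbb T}^d \cong \mathbb Z^d$ is torsion-free, squaring is injective on characters, so every element of $\Psi$ is a nontrivial character of $\mathbb T^d$ and $|\Psi| \leq k$. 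Applying Lemma \ref{lem:BHannihilates}(ii) to $\Psi$ (padding with repetitions if $|\Psi|<k$) produces a cylinder function $g$ subordinate to $U$ with $G_g(\tilde\psi) = 0$ for every $\tilde\psi \in \Psi$. Then for any $(\chi,\psi)$ with $\psi$ nontrivial, either $\psi = \psi_j$ for some $j$ contributing to $\Psi$, in which case $\widehat{f *_{\Gamma} g}(\chi,\psi) = \widehat{f}(\chi,\psi)\cdot 0 = 0$; or $|\widehat{f}(\chi,\psi)| < k^{-1/2}$, and the trivial bound $|G_g(\psi^2)| \leq \int g\, dm_{\mathbb T^r} = 1$ yields $|\widehat{f *_{\Gamma} g}(\chi,\psi)| < k^{-1/2}$. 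The main delicate step is the Fourier factorization itself — correctly tracking the substitution inside the $dm_{\Gamma}$ integral and observing that $\psi(2w_1) = \psi^2(w_1)$; once this identity is in hand, the rest of the proof is essentially forced by Lemmas \ref{lem:NumberOfLargeCoefficients} and \ref{lem:BHannihilates}(ii).
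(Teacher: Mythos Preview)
Your proposal is correct and follows essentially the same approach as the paper: both derive the Fourier factorization $\widehat{f*_{\Gamma} g}(\chi,\psi)=\hat f(\chi,\psi)\int \psi(2w_1)g(w_2)\,dm_\Gamma(w)$, use Lemma~\ref{lem:NumberOfLargeCoefficients} to isolate at most $k$ large coefficients, apply Lemma~\ref{lem:BHannihilates}(ii) to the corresponding $\psi^2$, and verify (\ref{eqn:SamePrime}) via $\int g(w_2)\,dm_\Gamma(w)=1$. The only cosmetic differences are that the paper restricts to nontrivial $\psi$ before selecting the top $k$ coefficients and proves (\ref{eqn:SamePrime}) by the direct Fubini computation you mention as your alternative; your explicit remark that squaring is injective on $\widehat{\mathbb T}^d$ (needed so the $\psi_j^2$ remain nontrivial for Lemma~\ref{lem:BHannihilates}(ii)) is a point the paper leaves implicit.
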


\begin{proof}
	Let $(\chi,\psi)_j$, $j\leq k$, denote the characters of $\mathbb T^d\times \mathbb T^d$ having the $k$ largest values of $|\hat{f}(\chi,\psi)|$, among those where  $\psi$  is nontrivial.  Lemma \ref{lem:NumberOfLargeCoefficients} implies
	\begin{equation}\label{eqn:OtherChars}
		|\hat{f}(\chi,\psi)|<k^{-1/2} \text{ for all } (\chi,\psi)\notin \{(\chi,\psi)_1,\dots, (\chi,\psi)_k\} \text{ with } \psi \text{ nontrivial.}
	\end{equation}
Applying Part (ii) of Lemma \ref{lem:BHannihilates} with $\psi^2$ in place of the $\chi_j$, we may choose a cylinder function $g$ subordinate to $U$  such that
\begin{equation}\label{eqn:GammaPsi0}
\int_{\Gamma} \psi(2w_1)g(w_2)\, dm_{\Gamma}(w)=0 \text{ for every } \psi \text{ appearing in the } (\chi,\psi)_j \text{ selected above.}
\end{equation}
	Expand $f$ as a Fourier series $\sum_{(\chi,\psi)\in \widehat{\mathbb T}^d\times \widehat{\mathbb T}^d} \hat{f}(\chi,\psi)\chi\psi$,
	so that
	\begin{align*}
		f*_{\Gamma}g&=\int f(x,y+2w_1)g(w_2)\, dm_{\Gamma}(w) \\
		&= \sum_{(\chi,\psi)} \hat{f}(\chi,\psi)\chi(x)\int \psi(y+2w_1)g(w_2)\, dm_{\Gamma}(w)\\
		&= \sum_{(\chi,\psi)} \hat{f}(\chi,\psi)\chi(x)\psi(y)\int \psi(2w_1)g(w_2)\, dm_{\Gamma}(w),
	\end{align*}
and
\begin{equation}\label{eqn:fStargHat}
\widehat{f*_{\Gamma} g}(\chi,\psi) = \hat{f}(\chi,\psi) \int \psi(2w_1)g(w_2)\, dm_{\Gamma}(w).
\end{equation}
 The integral in (\ref{eqn:fStargHat}) is $0$ when $(\chi,\psi)$ are among the $(\chi,\psi)_j$, so inequality (\ref{eqn:chi123bound}) is satisfied for these characters.  For the remaining characters with $\psi$ nontrivial, note that $\int |g|\, dm=1$, so (\ref{eqn:fStargHat}) implies $|\widehat{f*_{\Gamma}g}|\leq |\hat{f}|$ everywhere. Now (\ref{eqn:OtherChars}) and (\ref{eqn:GammaPsi0}) imply (\ref{eqn:chi123bound}). To prove (\ref{eqn:SamePrime}), write
\begin{align*}
  (f*_\Gamma g)'(x) &= \int \int f(x,y+w_1) g(w_2)\, dm_{\Gamma}(w)\, dy\\
   &= \int \int f(x,y+w_1) \, dy \,  g(w_2)\, dm_{\Gamma}(w)\\
   &=  f'(x) \int g(w_2)\, dm_{\Gamma}(w)\\
   &= f'(x). \qedhere
\end{align*}

\end{proof}

\begin{lemma}\label{lem:IntegrateToKronecker}
	With the hypotheses of Lemma \ref{lem:BHconvolvesToUniform}, let $f: \mathbb T^d\times \mathbb T^d\to [0,1]$, define $f': \mathbb T^d\to [0,1]$ by  $f'(x):=\int_{\mathbb T^d} f(x,y)\, dy$, and let $J':=  \int \int f'(x)f'(x+s)f'(x+2s)  \, ds \, dx$. 	Then there is a cylinder function $g$ subordinate to $U$ such that
\[
	J:=\int f(x,y) f(x+s,y+t) f*_{\Gamma}g(x+2s, y+2t) \, ds\, dt \, dx\, dy.
\]
	satisfies $|J-J'|<k^{-1/2}$.
\end{lemma}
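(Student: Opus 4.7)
The plan is to combine Lemma \ref{lem:BHconvolvesToUniform} with Lemma \ref{lem:SmallFourierToW}, interpreting both sides of the desired estimate as the ``Roth integral'' $I_3$ on the group $Z := \mathbb T^d\times \mathbb T^d$ relative to the closed subgroup $K := \{0_{\mathbb T^d}\}\times \mathbb T^d$.

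First, apply Lemma \ref{lem:BHconvolvesToUniform} to obtain a cylinder function $g$ subordinate to $U$ satisfying $|\widehat{f*_{\Gamma} g}(\chi,\psi)|< k^{-1/2}$ whenever $\psi$ is nontrivial, together with $(f*_{\Gamma} g)'=f'$.  With $K$ as above, the quotient $W=Z/K$ is isomorphic to $\mathbb T^d$, and $\widehat W$ is identified (via the quotient map) with the set of characters $(\chi,\psi)\in \widehat{Z}$ for which $\psi$ is trivial.  Under this identification, the averaging operation defined in (\ref{eqn:PrimeDef}) coincides with integrating out the second variable, i.e.~with the map $h\mapsto h'$ used in the statement.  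So $f'$ in the lemma and $f'$ as produced by (\ref{eqn:PrimeDef}) agree, and likewise $(f*_\Gamma g)'=f'$ by Lemma \ref{lem:BHconvolvesToUniform}.

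Next, apply Lemma \ref{lem:SmallFourierToW} with $f_0=f_1=f$ and $f_2=f*_{\Gamma}g$.  The hypothesis that $\chi\mapsto \chi^2$ is injective on $\widehat Z$ holds because $\widehat Z \cong \mathbb Z^{2d}$ is torsion free, and the hypothesis $|\widehat{f_2}(\chi,\psi)|\le \kappa :=k^{-1/2}$ for $(\chi,\psi)\notin \widehat W$ is exactly the estimate (\ref{eqn:chi123bound}) from the first step.  Writing $z=(x,y)\in Z$ and the auxiliary variable as $t=(s,t)\in Z$ (abusing notation), the integral $I$ of Lemma \ref{lem:SmallFourierToW} becomes precisely the quantity $J$ in the statement, while
\[
I_W=\int f_0'(z)f_1'(z+t)f_2'(z+2t)\,dz\,dt= \int f'(x)f'(x+s)f'(x+2s)\,dx\,dy\,ds\,dt=J',
\]
the final equality following because the integrand is independent of $y$ and of the second component of $t$.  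Lemma \ref{lem:SmallFourierToW} then yields
\[
|J-J'|=|I-I_W|\le k^{-1/2}\|f\|_{L^2(m)}\|f\|_{L^2(m)}\le k^{-1/2},
\]
since $0\le f\le 1$.

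There is no serious obstacle: the content of the lemma is already encoded in the two cited results, and the only verifications required are the bookkeeping items above, namely that $K=\{0\}\times \mathbb T^d$ makes the $W$-projection of \S\ref{sec:RothIntegral} coincide with the $f\mapsto f'$ appearing in the statement, that $(f*_\Gamma g)'=f'$ (built into Lemma \ref{lem:BHconvolvesToUniform}), and that $\widehat Z$ is torsion free so that squaring of characters is injective.
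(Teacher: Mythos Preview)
Your proposal is correct and follows essentially the same route as the paper: choose $g$ via Lemma \ref{lem:BHconvolvesToUniform}, then apply Lemma \ref{lem:SmallFourierToW} with $Z=\mathbb T^d\times\mathbb T^d$, $K=\{0\}\times\mathbb T^d$, $f_0=f_1=f$, $f_2=f*_\Gamma g$, and use $(f*_\Gamma g)'=f'$ to identify $I_W$ with $J'$. Your write-up is in fact a bit more explicit than the paper's about the bookkeeping (torsion-freeness of $\widehat Z$, the identification of $\widehat W$, and the bound $\|f\|_{L^2}\le 1$).
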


\begin{proof}   Choose, by Lemma \ref{lem:BHconvolvesToUniform}, a cylinder function $g$ subordinate to $U$ so that
	\begin{align}
		\label{eqn:2GgBound} &|\widehat{f*_\Gamma g}(\chi,\psi)| < k^{-1/2} \qquad \text{for all } \chi, \psi  \text{ with } \psi \in \widehat{\mathbb T}^d\setminus \{0\},\\
		&(f*_{\Gamma}g)'=f'.
 		\end{align}
Now we apply Lemma \ref{lem:SmallFourierToW} with $Z =  \mathbb T^d\times \mathbb T^d$,  $W = \mathbb T^d$, $K= \{0\}\times \mathbb T^d$, $I= J$, and $I_W = \int f'(x)f'(x+s)(f*_{\Gamma} g)'(x+2s) \, dx\, ds$,  using $\kappa = k^{-1/2}$ as supplied by inequality (\ref{eqn:2GgBound}).  We conclude that $|J-I_W|<k^{-1/2}$. Since $(f*_{\Gamma}g)'=f'$, we have $I_W=J'$, so this is the desired conclusion.  \end{proof}

\section{Averages for standard 2-step Weyl systems}\label{sec:AffineLimits}

In the next section we will prove Lemma \ref{lem:BHKroneckerAlternative} by reducing the general statement to the special case where the totally ergodic system under consideration is a standard 2-step Weyl system.  Proposition \ref{prop:WeakLimit} will then allow us to compute the limit of the multiple ergodic averages appearing in (\ref{eqn:RothApproxWithg}).

	For the remainder of this section, we fix $d, r\in \mathbb N$, $\alpha\in \mathbb T^d$, $\beta \in \mathbb T^r$ and let $S:(\mathbb T^d)^2\to (\mathbb T^d)^2$ be given by $S( x, y) = ( x +  2\alpha,  y +  x)$.  We assume $\alpha$ and $\beta$ generate $\mathbb T^d$ and $\mathbb T^r$, respectively, and we write $m$ for Haar probability measure on $\mathbb T^d$.  We maintain the notational conventions introduced in \S\ref{sec:Outline} and the intervening sections.

\begin{proposition}\label{prop:WeakLimit}
  With $d, r, \alpha, \beta$, and $S$ defined above, there is an affine joining $\Gamma$ of $\mathbb T^d$ with $\mathbb T^r$ such that for all Riemann integrable $g:\mathbb T^r\to \mathbb R$ and all bounded measurable $f: \mathbb T^d\times \mathbb T^d\to \mathbb R$ we have
  \begin{equation}\label{eqn:WeakLimit}
    \begin{split}
      \lim_{N\to\infty} \frac{1}{N}\sum_{n=1}^N g(n^2\beta)&\int f \cdot f\circ S^n\cdot f\circ S^{2n} \, d(m\times m)  \\
      &=\int f(x,y) f(x+s,y+t) f*_{\Gamma}g(x+2s, y+2t) \, ds\, dt \, dx\, dy,
    \end{split}
  \end{equation}
  where $f*_\Gamma g$ defined in (\ref{def:FstarG}).
\end{proposition}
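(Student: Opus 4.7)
Using $S^n(x,y) = (x + 2n\alpha,\, y + nx + n(n-1)\alpha)$ and the identity $2n(2n-1)\alpha = 2n(n-1)\alpha + 2n^2\alpha$, I rewrite the $n$-th integrand on the left of (\ref{eqn:WeakLimit}) as $g(w_2)\int f(x,y)\, f(x+s, y+t)\, f(x+2s, y+2t+2w_1)\, dx\, dy$ with $(s, t, w_1, w_2) := (2n\alpha,\, nx+n(n-1)\alpha,\, n^2\alpha,\, n^2\beta) = n\gamma(x) + n^2\delta$ in $Z := (\mathbb T^d)^3 \times \mathbb T^r$, where $\gamma(x) := (2\alpha,\, x-\alpha,\, 0,\, 0)$ and $\delta := (0, \alpha, \alpha, \beta)$. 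Bounded convergence then moves the $n$-average inside the $(x,y)$-integral, reducing the problem to pointwise computation of the inner $n$-limit.

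For each fixed $(x,y)$, Lemma \ref{lem:ConnectedWeyl}(ii) applied to $H(x) := \overline{\langle \gamma(x), \delta\rangle} \subseteq Z$ writes the inner limit as $\sum_j c_j(x) \int K_{x,y}\, dm_{H_j(x)}$, where $K_{x,y}(s,t,w_1,w_2) := g(w_2) f(x,y) f(x+s,y+t) f(x+2s, y+2t+2w_1)$ and $H_j(x)$ enumerate the components of $H(x)$. (The continuity hypothesis on $g$ in Lemma \ref{lem:ConnectedWeyl} is relaxed to Riemann integrability by the approximation argument following Lemma \ref{lem:ReductionToWeyl}; bounded measurability of $f$ is handled by $L^1$-approximation with continuous functions together with the trilinear continuity estimate $|\int (F_0 \cdot S^a F_1 \cdot S^b F_2 - \tilde F_0 \cdot S^a \tilde F_1 \cdot S^b \tilde F_2)\, dm| \leq 3M^2 \max_i \|F_i - \tilde F_i\|_{L^1}$, valid uniformly in $a, b$ whenever all $L^\infty$ norms are bounded by $M$.) A Fourier check using that $\alpha$ (and hence $2\alpha$) generates $\mathbb T^d$ shows $\overline{\langle (2\alpha, x-\alpha)\rangle} = \mathbb T^d \times \mathbb T^d$ for Haar-a.e.~$x$. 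Inspecting the integer combinations $m\gamma(x) + k\delta = (2m\alpha,\, m(x-\alpha) + k\alpha,\, k\alpha,\, k\beta)$ and taking closures then yields $H(x) = \mathbb T^d \times \mathbb T^d \times \Gamma_0$ for such $x$, where $\Gamma_0 := \overline{\langle (\alpha, \beta)\rangle} \subseteq \mathbb T^d \times \mathbb T^r$ is a joining by Observation \ref{obs:Joining}. The component of $H(x)$ containing $n\gamma(x)+n^2\delta$ is determined purely by the component of $\Gamma_0$ containing $(n^2\alpha, n^2\beta)$, so the weights $c_j(x) =: c_j$ are $x$-independent constants summing to $1$.

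Integrating the inner limit over $(x,y)$, the left-hand side of (\ref{eqn:WeakLimit}) becomes
\[
\sum_j c_j \int g(w_2)\, f(x,y)\, f(x+s, y+t)\, f(x+2s, y+2t+2w_1)\, dx\, dy\, ds\, dt\, dm_{\Gamma_{0,j}}(w_1, w_2),
\]
where $\Gamma_{0,j}$ are the connected components of $\Gamma_0$. Defining $\Gamma$ to be the affine joining of $\mathbb T^d$ with $\mathbb T^r$ with components $\Gamma_{0,j}$ (cosets of the identity component $(\Gamma_0)_0$, itself a joining by Observation \ref{obs:ComponentOfJoining}) and weights $c_j$, the $dm_\Gamma$-integral collapses to $(f *_\Gamma g)(x+2s, y+2t)$, recovering the right-hand side of (\ref{eqn:WeakLimit}). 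The main technical step is the identification $H(x) = \mathbb T^d \times \mathbb T^d \times \Gamma_0$ for a.e.~$x$ and the subsequent bookkeeping that repackages the Weyl-weighted sum over components of $\Gamma_0$ as a bona fide affine joining; both rest on the generation hypotheses for $\alpha$ and $\beta$.
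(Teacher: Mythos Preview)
Your proposal is correct and follows essentially the same route as the paper: you identify the relevant orbit closure via Kronecker/Weyl, show its component structure is $(\text{full torus in }(s,t)) \times (\text{joining in }(w_1,w_2))$, invoke Lemma~\ref{lem:ConnectedWeyl}(iii) to see the weights are $x$-independent, and then handle general $f$, $g$ by the same $L^1$/Riemann approximations the paper uses in Corollary~\ref{cor:ExplicitL2}. The only cosmetic difference is that you parametrize directly in $(\mathbb T^d)^3\times\mathbb T^r$ via $(s,t,w_1,w_2)$, having already built the $3$-AP constraint $s''=2s$, $t''=2t+2w_1$ into the coordinates, whereas the paper works in $(\mathbb T^d)^4\times\mathbb T^r$ with the subgroup $G_{3AP}$ playing the role of your $\mathbb T^d\times\mathbb T^d\times\{0\}$; the structural claims (your $H(x)=\mathbb T^d\times\mathbb T^d\times\Gamma_0$ versus the paper's $\Phi_0=G_{3AP}+\Lambda_0$) are equivalent reformulations of the same computation.
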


\begin{remark}  We use ``$2\alpha$'' in place of ``$\alpha$'' in our definition of $S$ to simplify computations.  Since every generating $\alpha\in \mathbb T^d$ can be written as $2\alpha'$, where $\alpha'$ is generating, there is no loss of generality.
\end{remark}

We first prove Lemma \ref{lem:IntegralFormula}, which provides explicit limits of polynomial averages on $(\mathbb T^d)^4\times \mathbb T^r$.  Lemma \ref{lem:ExplicitLimit} then provides an explicit pointwise-a.e.~limit for the relevant averages in (\ref{eqn:WeakLimit}) when  $f$ and $g$ are continuous.   Corollary \ref{cor:ExplicitL2} uses these to establish $L^2$ convergence with the same limit formula, for bounded measurable $f$ and Riemann integrable $g$.  Proposition \ref{prop:WeakLimit} is then proved in the last paragraph of this section.

The following lemma is needed for the proof of Lemma \ref{lem:IntegralFormula}; it is nothing but Fubini's theorem together with the translation invariance of Haar measure.
\begin{lemma}\label{lem:Fubini}
Let $\nu$ be a Borel probability measure on $\mathbb T^d\times \mathbb T^r$, let $m$ be Haar measure on $\mathbb T^d$, and let $h:\mathbb T^d\times (\mathbb T^d\times \mathbb T^r)\to \mathbb C$ be continuous.  Then
\[
\int h(t,w)\, d\nu(w)\, dm(t) = \int h(t-\pi_1(w), w) \, d\nu(w)\, dm(t)
\]
where $\pi_1 :\mathbb T^d\times \mathbb T^r\to \mathbb T^d$ is the projection map.
\end{lemma}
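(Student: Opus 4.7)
The plan is to chain Fubini's theorem with translation invariance of $m$. First I would swap the order of integration on the left-hand side: since $h$ is continuous on the compact space $\mathbb T^d\times \mathbb T^d\times \mathbb T^r$ it is bounded, and $m\otimes \nu$ is a finite product measure, so Fubini applies and gives
\[
\int\int h(t,w)\, d\nu(w)\, dm(t) = \int\int h(t,w)\, dm(t)\, d\nu(w).
\]

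Next, for each fixed $w\in \mathbb T^d\times\mathbb T^r$, the translation invariance of Haar measure $m$ on $\mathbb T^d$ yields
\[
\int h(t,w)\, dm(t) = \int h(t-\pi_1(w), w)\, dm(t),
\]
because the change of variables $t\mapsto t-\pi_1(w)$ preserves $m$. Substituting this into the previous display gives
\[
\int\int h(t,w)\, d\nu(w)\, dm(t) = \int\int h(t-\pi_1(w),w)\, dm(t)\, d\nu(w).
\]

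Finally, another application of Fubini swaps the order of integration back, producing the right-hand side of the claimed identity. I would note in passing that continuity of $h$ guarantees joint measurability of $(t,w)\mapsto h(t-\pi_1(w),w)$ (since $\pi_1$ is continuous), which is needed to justify this last Fubini step.

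There is no real obstacle here; the only thing to be careful about is verifying that the integrand on the right is jointly measurable and bounded so that Fubini legitimately applies, but both properties follow immediately from continuity of $h$ and $\pi_1$ together with compactness of the domain.
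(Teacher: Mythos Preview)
Your proof is correct and matches the paper's approach exactly: the paper does not write out a proof but simply remarks that the lemma ``is nothing but Fubini's theorem together with the translation invariance of Haar measure,'' which is precisely the chain of reasoning you carried out.
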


Let $G=(\mathbb T^d)^4\times \mathbb T^r$, with elements of $G$ written $(z_1,z_2,z_3,z_4,z_5)$, $z_i\in \mathbb T^d$ for $i\leq 4$, $z_5\in \mathbb T^r$.  Let $G_{3AP}$ be the closed connected subgroup $\{(s,t,2s,2t,0):s,t\in \mathbb T^d\}\subseteq G$.

\begin{lemma}\label{lem:IntegralFormula}
With $\alpha$, $\beta$, $d$, $r$, and $G$ as above, let $\mb u = (0,\alpha,0,4\alpha,\beta)\in G$. Then there is an affine joining $\Gamma$ of $\mathbb T^d$ with $\mathbb T^r$ such that
\begin{equation}\label{eqn:ANIntegral}
\lim_{N\to\infty} \frac{1}{N}\sum_{n=1}^N F(n\mb c + n^2\mb u) = \int F(s,t,2s,2t+2w_1,w_2)\, dm_{\Gamma}(w) \, ds\, dt
\end{equation}
for every continuous $F:G\to \mathbb C$ and all $\mb c\in G$ such that $\overline{\langle \mb c \rangle} = G_{3AP}$.
\end{lemma}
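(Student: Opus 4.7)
The plan is to apply Lemma \ref{lem:ConnectedWeyl}(ii) after a change of variables that makes the linear constraints defining $G_{3AP}$ explicit. Since $\overline{\langle\mb c\rangle}=G_{3AP}$, I would first write $\mb c=(c_1,c_2,2c_1,2c_2,0)$ with $(c_1,c_2)$ generating $\mathbb T^d\times\mathbb T^d$. A direct calculation gives
\begin{equation*}
n\mb c+n^2\mb u=(nc_1,\ nc_2+n^2\alpha,\ 2nc_1,\ 2nc_2+4n^2\alpha,\ n^2\beta),
\end{equation*}
so if we set $\tilde F(s,t,\xi,\eta):=F(s,t,2s,2t+2\xi,\eta)$ as a continuous function on $(\mathbb T^d)^3\times\mathbb T^r$, then $F(n\mb c+n^2\mb u)=\tilde F(n\tilde{\mb c}+n^2\tilde{\mb u})$ where $\tilde{\mb c}:=(c_1,c_2,0,0)$ and $\tilde{\mb u}:=(0,\alpha,\alpha,\beta)$. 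This reduces the task to evaluating the Cesàro limit of $\tilde F(n\tilde{\mb c}+n^2\tilde{\mb u})$.

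The next step is to identify $\tilde Z:=\overline{\langle\tilde{\mb c},\tilde{\mb u}\rangle}$. Since $(c_1,c_2)$ generates $\mathbb T^d\times\mathbb T^d$, we have $\overline{\langle\tilde{\mb c}\rangle}=\mathbb T^d\times\mathbb T^d\times\{0\}\times\{0\}$, while $\overline{\langle\tilde{\mb u}\rangle}=\{(0,x,x,y):(x,y)\in \Gamma_0\}$ where $\Gamma_0:=\overline{\langle(\alpha,\beta)\rangle}\subseteq\mathbb T^d\times\mathbb T^r$. Because $\alpha$ and $\beta$ generate $\mathbb T^d$ and $\mathbb T^r$ respectively, Observation \ref{obs:Joining} implies $\Gamma_0$ is a joining of $\mathbb T^d$ with $\mathbb T^r$. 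Summing the two closed subgroups and noting that $b+x$ ranges over all of $\mathbb T^d$ as $b$ does, we get $\tilde Z=\mathbb T^d\times\mathbb T^d\times\Gamma_0$.

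Now I would apply Lemma \ref{lem:ConnectedWeyl}(ii) to $\tilde Z$, which has finitely many connected components since it sits inside a finite-dimensional torus: there exist nonnegative constants $c_j$ with $\sum_j c_j=1$ such that
\begin{equation*}
\lim_{N\to\infty}\frac{1}{N}\sum_{n=1}^N \tilde F(n\tilde{\mb c}+n^2\tilde{\mb u})=\sum_j c_j \int \tilde F\, dm_{\tilde Z_j}
\end{equation*}
for every continuous $\tilde F$. Since $\mathbb T^d\times\mathbb T^d$ is connected, the components factor as $\tilde Z_j=\mathbb T^d\times\mathbb T^d\times\Gamma_{0,j}$, where the $\Gamma_{0,j}$ are the connected components of $\Gamma_0$. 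By Observation \ref{obs:ComponentOfJoining}, the identity component of $\Gamma_0$ is itself a joining of $\mathbb T^d$ with $\mathbb T^r$, and the $\Gamma_{0,j}$ are its cosets. Packaging the weights $c_j$ with the cosets $\Gamma_{0,j}$ defines, via Definition \ref{def:AffineJoining}, an affine joining $\Gamma$. Substituting back $\tilde F(s,t,\xi,\eta)=F(s,t,2s,2t+2\xi,\eta)$ and renaming $\xi\mapsto w_1$, $\eta\mapsto w_2$ yields precisely (\ref{eqn:ANIntegral}). Independence of $\Gamma$ from the particular choice of $\mb c$ (only the closure $G_{3AP}$ matters) follows from Lemma \ref{lem:ConnectedWeyl}(iii), applied with $\tilde{\mb u}$ fixed and $\tilde{\mb c},\tilde{\mb c}'$ both generating the connected group $\mathbb T^d\times\mathbb T^d\times\{0\}\times\{0\}$.

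The main obstacle is essentially bookkeeping: one must correctly identify $\tilde Z$ and verify that the connected components of $\Gamma_0$ are cosets of a joining so that Definition \ref{def:AffineJoining} applies. No fresh equidistribution input is required beyond Lemma \ref{lem:ConnectedWeyl}; Observations \ref{obs:Joining}--\ref{obs:ComponentOfJoining} supply all the needed structural facts about joinings.
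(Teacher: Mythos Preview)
Your proof is correct and uses the same key inputs as the paper---Lemma~\ref{lem:ConnectedWeyl}(ii) for the equidistribution, Lemma~\ref{lem:ConnectedWeyl}(iii) for independence from $\mb c$, and Observations~\ref{obs:Joining}--\ref{obs:ComponentOfJoining} to identify the joining $\overline{\langle(\alpha,\beta)\rangle}$---but the execution is different and somewhat cleaner. The paper works directly in $G$: it sets $\Lambda=\overline{\langle\mb u\rangle}$, $\Phi=G_{3AP}+\Lambda$, applies Lemma~\ref{lem:ConnectedWeyl}(ii) to $\Phi$, and then must prove a three-part Claim identifying $\Phi_0=G_{3AP}+\Lambda_0$ and its cosets, followed by a change of variable $t\mapsto t-w_1$ justified by Lemma~\ref{lem:Fubini}. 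Your substitution $\tilde F(s,t,\xi,\eta)=F(s,t,2s,2t+2\xi,\eta)$ absorbs both of these steps up front: it collapses the constraints defining $G_{3AP}$ so that the ambient group becomes the genuine product $\mathbb T^d\times\mathbb T^d\times\Gamma_0$, whose connected components are immediately $\mathbb T^d\times\mathbb T^d\times\Gamma_{0,j}$, and the integral formula drops out with no further manipulation. The trade-off is that the paper's argument stays inside $G$ throughout and makes the role of $G_{3AP}$ as a summand more visible, while yours hides that structure in the reparametrization; but your route avoids both the Claim and Lemma~\ref{lem:Fubini} entirely.
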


\begin{proof}  Assume $\mb c\in G$ is such that $\overline{\langle \mb c \rangle} = G_{3AP}$. Let $\Lambda = \overline{\langle \mb u\rangle}$ and let $\Phi = \overline{\langle \mb c, \mb u\rangle}$.  Note that $\Phi = G_{3AP}+\Lambda$.  Also, $\Phi$ does not depend on $\mb c$ (asssuming $\mb c$ generates $G_{3AP}$).

Since $\Phi$ is a closed subgroup of a finite dimensional torus, its identity component $\Phi_0$ has finite index in $\Phi$.  Part (ii) of Lemma \ref{lem:ConnectedWeyl} then provides cosets $\Phi_j$ of $\Phi_0$ in $\Phi$ and nonnegative $c_j$ with $\sum c_j=1$ such that for every continuous $F:G\to \mathbb C$ we have
\begin{equation}\label{eqn:JoinPhi}
  \lim_{N\to\infty} \frac{1}{N}\sum_{n=1}^N F(n\mb c + n^2\mb u) = \sum c_j \int F\, dm_{\Phi_j}.
\end{equation}
 Part (iii) of Lemma \ref{lem:ConnectedWeyl} implies that the $c_j$ do not depend on $\mb c$, assuming $\overline{\langle \mb c\rangle}=G_{3AP}$ (which is connected). We will prove that for each coset $\Phi_j$ of $\Phi_0$ in $\Phi$, we can write
\begin{equation}\label{eqn:SimplifyPhij}
\int F\, dm_{\Phi_j} = \int F(s,t,2s,2t+2w_1,w_2) \, dm_{\Lambda_j}(w) \, ds\, dt
\end{equation}
where $\Lambda_j$ is a coset of $\Lambda_0$ ($=$ the identity component of $\Lambda$), and that $\Lambda_0$ can be viewed as a joining of $\mathbb T^d$ with $\mathbb T^r$.  Combining (\ref{eqn:SimplifyPhij}) with (\ref{eqn:JoinPhi}), we get (\ref{eqn:ANIntegral}), where $\Gamma$ is the affine joining of $\mathbb T^d$ with $\mathbb T^r$ determined by $c_j$ and $\Lambda_j$.

\begin{claim}
\begin{enumerate}
\item[(i)] $\Lambda$ is a joining of the closed subgroups $H_1:=\{(0,z,0,4z,0):z\in \mathbb T^d\}$ and $H_2:=\{(0,0,0,0,v):v\in \mathbb T^r\}$.  Its identity component $\Lambda_0$ is also a joining of $H_1$ and $H_2$.

\item[(ii)]  $\Phi_0:= G_{3AP}+\Lambda_0$ is the identity component of $G_{3AP}+\Lambda$.

\item[(iii)] Every coset of $\Phi_0$ in $\Phi$ has the form $G_{3AP}+\Lambda_j$ where $\Lambda_j$ is a coset of $\Lambda_0$ in $\Lambda$.
\end{enumerate}
\end{claim}
Part (i) of the Claim follows from Observation \ref{obs:Joining}, the fact that  $\alpha$ and $\beta$ are generating, and Observation \ref{obs:ComponentOfJoining}.

To prove part (ii), note that $G_{3AP}$ is closed and connected, so $G_{3AP}+\Lambda_0$ is a closed connected subgroup of $G_{3AP}+\Lambda$.  Since $\Lambda_0$ is the identity component of $\Lambda$, which is a closed subgroup of a finite dimensional torus, we see that $\Lambda_0$ has finite index in $\Lambda$.  Thus $G_{3AP}+\Lambda_0$ is a closed, connected, finite index subgroup of $G_{3AP}+\Lambda$, and therefore is its identity component.  Part (iii) is an immediate consequence of (ii) and the fact that $G_{3AP}$ is connected.

The Claim allows us to write integrals with respect to Haar measure over $\Phi_j$ explicitly.  We write integration over a coset $\Lambda_j$ of $\Lambda_0$ in $\Lambda$ as
\begin{equation}\label{eqn:Gamma1Integral}
	\int F\, dm_{\Lambda_j} = \int F(0,w_1,0,4w_1, w_2)\, dm_{\Lambda_j}(w).
\end{equation}
where the $m_{\Lambda_j}$ on the right is viewed as Haar probability measure on a coset of a joining of $\mathbb T^d$ with $\mathbb T^r$; this identification is possible as $H_1$ and $H_2$ are isomorphic to $\mathbb T^d$ and $\mathbb T^r$, respectively. We then write integration over $\Phi_j$ (=$G_{3AP}+\Lambda_j$) as
\begin{equation}\label{eqn:overPhi0}
\int F \, dm_{\Phi_j}=\int F(s,t+w_1,2s,2t+4w_1,w_2) \, dm_{\Lambda_j}(w) \, ds\, dt.
\end{equation}
This is justified by the fact that the above integral is invariant under translation by elements of $G_{3AP}$ and by elements of $\Lambda_0$, so the above integral is indeed integration with respect to Haar probability measure on $G_{3AP}+\Lambda_j$.

We may replace $t$ with $t-w_1$ in (\ref{eqn:overPhi0}).  To see this, first observe that the order of the outer integrals can be changed to $dt\, ds$.  For a fixed $s\in \mathbb T^d$ define $h_s$ on $\mathbb T^d \times (\mathbb T^d\times \mathbb T^r)$ by $h_s(t,w):=F(s,t+w_1,2s,2t+4w_1,w_2)$.  The right hand side of (\ref{eqn:overPhi0}) can then be written as $\int \int h_s(t,w)\, dm_{\Lambda_j}(w)\, dt\, ds$. We apply Lemma \ref{lem:Fubini} with $m_{\Lambda_0}$ in place of $\nu$, and again change the order of integration. The integral in (\ref{eqn:overPhi0}) therefore simplifies to yield (\ref{eqn:SimplifyPhij}), completing the proof.
\end{proof}

An immediate consequence of Lemma \ref{lem:IntegralFormula} is that for every continuous $F:G\to \mathbb C$
\begin{equation}\label{eqn:z}
\lim_{N\to\infty} \frac{1}{N}\sum_{n=1}^N F(\mb z + n\mb c +n^2\mb u) = \int F(z_1+s,z_2+t,z_3+ 2s, z_4 + 2t+2w_1, z_5+w_2)\, dm_{\Gamma}(w) \, ds\, dt
\end{equation}
for all $\mb z\in G$, all $\mb c\in G$ such that $\overline{\langle \mb c\rangle}=G_{3AP}$.  This can be seen by applying Lemma \ref{lem:IntegralFormula} with the translate $F_{-\mb z}$ in place of $F$.

\begin{lemma}\label{lem:ExplicitLimit}
  With the above $d, r, \alpha, \beta$,  $S$, $G$, and the affine joining $\Gamma$ provided by Lemma \ref{lem:IntegralFormula}, there is a set $W\subseteq \mathbb T^d$ with $m(W)=1$ such that
\begin{equation}\label{eqn:AffineAverageDef}
	\begin{split}\lim_{N\to \infty}  \frac{1}{N}\sum_{n=1}^N &g(n^2\beta) \cdot f\circ S^{n}(x,y) \cdot f\circ S^{2n}(x,y)  \\
	&= \int f(x+s, y+ t)f(x+2s, y+2t+2w_1)g(w_2) \, dm_{\Gamma}(w) \, ds\, dt.\end{split}
\end{equation}
for all $x\in W$, all $y\in \mathbb T^d$, and all continuous $f:(\mathbb T^d)^2\to \mathbb R$, $g: \mathbb T^r\to \mathbb R$.
\end{lemma}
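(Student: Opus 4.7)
The plan is to reduce the pointwise claim to the consequence (\ref{eqn:z}) of Lemma \ref{lem:IntegralFormula} by choosing $\mb c$ and $F$ so that the sequence $F(n\mb c+n^2\mb u)$ reproduces $g(n^2\beta)\cdot f\circ S^n(x,y)\cdot f\circ S^{2n}(x,y)$.  The formula for iterates of a $2$-step Weyl system recalled in \S\ref{sec:WeylKronecker} (applied with $2\alpha$ in place of $\alpha$) gives
\[
S^n(x,y) = (x+2n\alpha,\, y+nx+n(n-1)\alpha), \qquad S^{2n}(x,y) = (x+4n\alpha,\, y+2nx+2n(2n-1)\alpha).
\]
Matching the coefficients of $n$ and $n^2$ against $\mb u=(0,\alpha,0,4\alpha,\beta)$ forces the choice
\[
\mb c(x) := (2\alpha,\, x-\alpha,\, 4\alpha,\, 2(x-\alpha),\, 0),
\]
which lies in $G_{3AP}$ since it has the form $(s,t,2s,2t,0)$ with $s=2\alpha$, $t=x-\alpha$.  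A direct expansion then confirms that $n\mb c(x)+n^2\mb u$ has entries exactly matching the arguments appearing in $f\circ S^n(x,y)$, $f\circ S^{2n}(x,y)$, and $g(n^2\beta)$.

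The one genuinely technical step will be to show the set
\[
W := \{x\in \mathbb T^d : \overline{\langle \mb c(x)\rangle} = G_{3AP}\}
\]
has Haar measure $1$.  Under the natural identification of $G_{3AP}$ with $\mathbb T^d\times \mathbb T^d$ via $(s,t,2s,2t,0)\leftrightarrow (s,t)$, this reduces to showing that $(2\alpha,\, x-\alpha)$ generates $\mathbb T^{2d}$.  By Lemma \ref{lem:KroneckerCrit}, failure means that some nonzero $(m_1,m_2)\in \mathbb Z^d\times \mathbb Z^d$ satisfies $2m_1\cdot \alpha + m_2\cdot (x-\alpha)\in \mathbb Z$.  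If $m_2=0$ and $m_1\neq 0$, this forces $2m_1\cdot \alpha\in \mathbb Z$, contradicting that $\alpha$ generates $\mathbb T^d$ (apply the generating property to the nonzero vector $2m_1$).  If $m_2\neq 0$, the condition pins $x$ to a proper affine subtorus of $\mathbb T^d$, a null set; since the offending $(m_1,m_2)$ range over a countable set, their union is still null, so $m(W)=1$.  Crucially, $W$ depends only on $\alpha$, not on $f$, $g$, $y$, or $\beta$.

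To finish, fix $x\in W$ and $y\in \mathbb T^d$, and apply (\ref{eqn:z}) with $\mb z=0$, the above $\mb c(x)$ and $\mb u$, and the continuous test function
\[
F(z_1,z_2,z_3,z_4,z_5) := f(x+z_1,\, y+z_2)\, f(x+z_3,\, y+z_4)\, g(z_5).
\]
The left-hand side of (\ref{eqn:z}) is then the Ces\`aro average in (\ref{eqn:AffineAverageDef}), while the right-hand side becomes
\[
\int f(x+s,\, y+t)\, f(x+2s,\, y+2t+2w_1)\, g(w_2)\, dm_{\Gamma}(w)\, ds\, dt,
\]
the claimed limit; here $\Gamma$ is the affine joining supplied once and for all by Lemma \ref{lem:IntegralFormula}, independent of $\mb c(x)$.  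The main obstacle is the measure-zero bookkeeping defining $W$; everything else is routine algebraic matching plus a single invocation of the already-established (\ref{eqn:z}).
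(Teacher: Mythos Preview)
Your proposal is correct and follows essentially the same route as the paper: both express the averaged term as $F(\mb z + n\mb c_x + n^2\mb u)$ for a continuous $F$ on $G$, invoke (\ref{eqn:z}), and then use Kronecker's criterion to show that the set $W=\{x:\overline{\langle \mb c_x\rangle}=G_{3AP}\}$ has full measure. Your only cosmetic difference is absorbing the constant $(x,y,x,y,0)$ into $F$ rather than carrying it as $\mb z$; incidentally, your coefficient $x-\alpha$ in $\mb c(x)$ is the one that actually matches the orbit formula, whereas the paper's printed $x+\alpha$ appears to be a sign slip.
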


\begin{proof}  Write the terms the left hand side of (\ref{eqn:AffineAverageDef}) as
\begin{align*}
g(n^2\beta)f(S^{n}(x,y))&f(S^{2n}(x,y))\\&= F(x+2n\alpha,y+nx+\tbinom{n}{2}2\alpha,x+4n\alpha,y+2nx+\tbinom{2n}{2}2\alpha,n^2\beta)\\
&= F(\mb z_{x,y} + n\mb c_x + n^2\mb u),
\end{align*}
where $F:G\to \mathbb R$ is given by $F(x,y,x',y',z):=f(x,y)f(x',y')g(z)$ and \[\mb z_{x,y} = (x,y,x,y,0) \qquad \mb c_{x} = (2\alpha, x+\alpha, 4\alpha, 2(x+\alpha),0) \qquad \mb u = (0,\alpha,0,4\alpha,\beta).\]
If $f$ and $g$ are continuous then $F$ is continuous, and we may apply Lemma \ref{lem:IntegralFormula} (and (\ref{eqn:z}) in particular) to conclude that
\[
\lim_{N\to\infty} \frac{1}{N}\sum_{n=1}^N F(\mb z_{x,y} + n\mb c_x + n^2\mb u)= \int F(x+s,y+t,x+2s,y+2t+2w_1,w_2) \, dm_{\Gamma}(w) \, ds\,dt
\]
for all $x\in \mathbb T^d$ such that $\overline{\langle \mb c_x \rangle}=G_{3AP}$ and all $y\in \mathbb T^d$.   This is equivalent to (\ref{eqn:AffineAverageDef}).

Let $W:=\{x\in \mathbb T^d: \overline{\langle \mb c_x \rangle}=G_{3AP}\}$. To complete the proof, we will show that $m(W)=1$.  Note that $x\in W$ if and only if $\chi(\mb c_x)\neq 1$ for every nontrivial character $\chi$ of $G_{3AP}$, and there are only countably many characters, so it suffices to prove that for every such $\chi$, $m(E_\chi)=0$, where $E_\chi:=\{x\in \mathbb T^d: \chi(\mb c_x)=1\}$.  Every character of $G_{3AP}$ can be written as $\chi((s,t,2s,2t,0))= \exp(2\pi i (\mb j\cdot s + \mb k \cdot t))$ for some $\mb j, \mb k\in \mathbb Z^d$, and if $\chi$ is nontrivial then $\mb j$ and $\mb k$ are not both $0$.  Thus, $\chi(\mb c_x)=1$ if and only if $\mb k \cdot x = -(2\mb j + \mb k)\cdot \alpha$.  When $\mb k=\mb 0$ and $\mb j\neq \mb 0$, we then have $E_{\chi}=\varnothing$.  When $\mb k \neq \mb 0$, we see that $E_{\chi}$ is contained in a coset of the closed proper subgroup $\{x\in \mathbb T^d: \mb k\cdot x = 0\}$, so that $m(E_\chi)=0$. \end{proof}

\begin{corollary}\label{cor:ExplicitL2}
  With $d,r,\alpha,\beta$ and $S$ defined above, let $f\in L^\infty(m\times m)$ and let $g:\mathbb T^r\to \mathbb R$  be Riemann integrable. Define $A_N\in L^\infty(m\times m)$ by
  \[A_N:=  \frac{1}{N}\sum_{n=1}^N g(n^2\beta) \cdot f\circ S^{n} \cdot f\circ S^{2n}\]
   and let $A(x,y):=\int f(x+s, y+ t)f(x+2s, y+2t+2w_1)g(w_2) \, dm_{\Gamma}(w) \, ds\, dt$, where $\Gamma$ is the affine joining given by Lemma \ref{lem:IntegralFormula}.  Then $\lim_{N\to\infty} A_N = A$ in $L^2(m\times m)$.
\end{corollary}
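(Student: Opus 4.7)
The plan is to upgrade the pointwise a.e.\ convergence supplied by Lemma \ref{lem:ExplicitLimit} to $L^2$ convergence, and then extend from continuous functions to $f\in L^\infty(m\times m)$ and from continuous to Riemann integrable $g$ through a three-step approximation. Throughout, the measure-preserving property of $S$ supplies the isometry $\|h\circ S^n\|_{L^2}=\|h\|_{L^2}$, which is the source of all the uniform-in-$N$ estimates used below. \textbf{Step 1} (continuous $f$ and $g$): Lemma \ref{lem:ExplicitLimit} gives $A_N\to A$ on a full-measure set, and the bound $\|A_N\|_\infty \le \|f\|_\infty^2\|g\|_\infty$ lets the bounded convergence theorem upgrade this to convergence in $L^2(m\times m)$.

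\textbf{Step 2} (extend to $f\in L^\infty$, $g$ continuous): Approximate $f$ in $L^2$ by continuous $f_k$ satisfying $\|f_k\|_\infty\le \|f\|_\infty$ (for instance by convolving with a continuous approximate identity, then truncating). Let $A_N^{(k)}$ and $A^{(k)}$ denote the versions of $A_N$ and $A$ with $f$ replaced by $f_k$. The identity $f\cdot f - f_k\cdot f_k = (f-f_k)f + f_k(f-f_k)$, Minkowski's inequality, and the $L^2$-isometry of $h\mapsto h\circ S^n$ give
\[\|A_N-A_N^{(k)}\|_{L^2}\le 2\|g\|_\infty\|f\|_\infty\|f-f_k\|_{L^2}\]
uniformly in $N$, and Minkowski's integral inequality (against the probability measure $dm_\Gamma\,ds\,dt$) yields the same bound for $\|A-A^{(k)}\|_{L^2}$. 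Combined with Step 1 applied to $f_k$, this gives the conclusion for $f\in L^\infty$ and continuous $g$.

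\textbf{Step 3} (extend to Riemann integrable $g$): Given $\varepsilon>0$, sandwich $g$ by continuous $g_-\le g\le g_+$ with $\int(g_+-g_-)\,dm_{\mathbb T^r}<\varepsilon$, and let $A_N^-, A^-$ be the objects with $g$ replaced by $g_-$. Pointwise,
\[|A_N-A_N^-|\le \|f\|_\infty^2\cdot\tfrac{1}{N}\textstyle\sum_{n=1}^N(g_+-g_-)(n^2\beta);\]
since $\beta$ generates $\mathbb T^r$, Lemma \ref{lem:ConnectedWeyl}(i) (applied with $\alpha=0$) makes the right-hand side converge to $\|f\|_\infty^2\int(g_+-g_-)\,dm_{\mathbb T^r}<\varepsilon\|f\|_\infty^2$. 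For the limit, $|A-A^-|\le \|f\|_\infty^2\int(g_+-g_-)(w_2)\,dm_\Gamma(w)$; the \emph{key observation}, obtained by unpacking Definition \ref{def:AffineJoining} and using that $\pi_2|_{\Gamma_0}$ is a surjective continuous group homomorphism onto $\mathbb T^r$ together with translation invariance of Haar measure on the translates $\Gamma_j$, is that the push-forward of $m_\Gamma$ under $w\mapsto w_2$ equals $m_{\mathbb T^r}$. Hence the bound on $|A-A^-|$ is again $<\varepsilon\|f\|_\infty^2$. Applying Step 2 to $g_-$ and letting $\varepsilon\to 0$ closes the argument. The only non-routine point—and the main obstacle—is the marginal identification $(\pi_2)_*m_\Gamma=m_{\mathbb T^r}$; without it, Riemann integrable $g$ could not be controlled effectively at the level of the limit $A$.
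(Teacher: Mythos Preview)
Your proof is correct and follows essentially the same strategy as the paper: bootstrap from Lemma~\ref{lem:ExplicitLimit} via bounded convergence, then extend by approximating $f$ and $g$ separately. The only differences are cosmetic. You extend $f$ first (Step~2) and $g$ second (Step~3), whereas the paper reverses this order: it first passes from continuous to Riemann integrable $g$ (still with continuous $f$) using Lemma~\ref{lem:RiemannCoefficients} at the pointwise level, and only then approximates $f$ in $L^2$. Your explicit identification of the marginal $(\pi_2)_*m_\Gamma = m_{\mathbb T^r}$ is a clean way to handle the limit $A$; the paper uses the same fact implicitly when it argues that $h_0^{(k)}\to g$ in $L^2(m_{\mathbb T^r})$ forces convergence of the integrals against $m_\Gamma$. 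One small quibble: invoking Lemma~\ref{lem:ConnectedWeyl}(i) ``with $\alpha=0$'' is fine since the hypothesis there is that $\alpha,\beta$ \emph{together} generate $Z$, but it would be cleaner just to cite Weyl's theorem directly for the equidistribution of $(n^2\beta)$.
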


\begin{proof} Let $W$ be the set provided by Lemma \ref{lem:ExplicitLimit}.  To deduce Corollary \ref{cor:ExplicitL2}, we first prove that (\ref{eqn:AffineAverageDef}) holds for all $x\in W$, $y\in \mathbb T^d$, assuming $f$ is continuous and $g$ is Riemann integrable.  To prove this, assume we have such $x$, $y$, $f$, and $g$. Let $h_0^{(k)}, h_1^{(k)}$ be continuous functions on $\mathbb T^r$ satisfying  $\inf g \leq h_0^{(k)}\leq g \leq  h_1^{(k)}\leq \sup g$ pointwise, such that $\lim_{k\to\infty} \int h_1^{(k)} - h_0^{(k)} \, dm_{\mathbb T^r}=0$.  For each $k$, Lemma \ref{lem:IntegralFormula} says that (\ref{eqn:AffineAverageDef}) holds with $h_i^{(k)}$ in place of $g$.   Applying Lemma \ref{lem:RiemannCoefficients} with $f\circ S^n(x,y)\cdot f\circ S^{2n}(x,y)$ in place of $v_n$, we see that
\begin{equation}\label{eqn:h0}\lim_{N\to\infty} A_N(x,y) = \lim_{k\to\infty} \int f(x+s, y+ t)f(x+2s, y+2t+2w_1)h_0^{(k)}(w_2) \, dm_{\Gamma}(w) \, ds\, dt
\end{equation}
The pointwise inequalities $h_0^{(k)} \leq g \leq h_1^{(k)}$ and the assumption $\lim_{k\to\infty} \int h_1^{(k)} - h_0^{(k)}\, dm_{\mathbb T^d}=0$ now imply that $\lim_{k\to\infty} h_0^{(k)} = g$ in $L^2(m_{\mathbb T^r})$.  The limit on the right of (\ref{eqn:h0}) is therefore equal to $\int f(x+s,y+t)f(x+2s,y+2t+2w_1) g(w_2)\, dm_\Gamma(w)\, ds\, dt$.  This proves that $A_N$ converges to $A$ for $m\times m$ almost every $(x,y)$, and the dominated convergence theorem then implies $A_N$ converges to $A$ in $L^2(m\times m)$, in the special case where $f$ is continuous and $g$ is Riemann integrable.

To prove the general case of Corollary \ref{cor:ExplicitL2}, let $f\in L^\infty(m\times m)$ and let $\varepsilon>0$.  Assume, without loss of generality, that $\sup(|g|)\leq 1$. We write $\|\cdot\|$ for the $L^2$ norm given by $m\times m$. Let $f_0:(\mathbb T^d)^2\to \mathbb R$ be continuous with $\|f-f_0\|<\varepsilon$.

Let $A_N':=\frac{1}{N}\sum_{n=1}^N g(n^2\beta)\cdot f_0\circ S^n \cdot f_0\circ S^{2n}$, and let
\[A'(x,y):=\int f_0(x+s, y+ t)f_0(x+2s, y+2t+2w_1)g(w_2) \, dm_{\Gamma}(w) \, ds\, dt.\]  Note that\footnote{Apply the identity $ab-cd =  a(b-d)+(a-c)(d)$ with $a=f_0\circ S^n$, $b=f_0\circ S^{2n}$, $c=f\circ S^n$, $d=f\circ S^{2n}$, note that $\|f\circ S^n - f_0\circ S^n\|=\|f-f_0\|$ and likewise for $S^{2n}$.}
\[\|f\circ S^n\cdot f\circ S^{2n} - f_0 \circ S^n \cdot f_0\circ S^{2n}\|\leq 2\|f-f_0\|,\] hence $\|A_N-A_N'\|\leq 2(\sup |g|)\|f-f_0\|$ for every $N$, and  similarly $\|A-A'\|\leq 2(\sup |g|)\|f-f_0\|$.  Then \begin{align*}
\|A_N-A\| &=  \|A_N-A_N'+A_N' - A' + A'- A\|\\
          &\leq \|A_N-A_N'\| + \|A_N' - A'\| + \|A'- A\|\\
          &< 2\varepsilon + \|A_N' - A'\| + 2\varepsilon && \text{assuming } \sup(|g|)\leq 1.
\end{align*}
From the first paragraph of this proof, we have $\|A_N'-A'\|\to 0$ as $N\to\infty$.  Combining this with the above inequalities, we get $\limsup_{N\to\infty}\|A_N-A\| \leq 4\varepsilon$.  Since $\varepsilon$ was arbitrary and $g$ is fixed, we have $\|A_N-A\|\to 0$ as $N\to\infty$. \end{proof}

Proposition \ref{prop:WeakLimit} now follows from Corollary \ref{cor:ExplicitL2}, observing that the left hand side of (\ref{eqn:WeakLimit}) is $\lim_{N\to\infty} \int f(x,y) A_N(x,y)\, dx\, dy$, with $A_N$ as in Corollary \ref{cor:ExplicitL2}, and the right hand side of $(\ref{eqn:WeakLimit})$ can be written as $\int f(x,y) A(x,y) \, dx\, dy$ (recalling the definition of $f*_{\Gamma} g$ from (\ref{def:FstarG})).

\section{Proof of Lemma \ref{lem:BHKroneckerAlternative}}\label{sec:MainProof}
Recall the statement of Lemma \ref{lem:BHKroneckerAlternative}: let $k<r\in \mathbb N$, $\ell\in \mathbb N$, let $\bm\beta\in \mathbb T^r$ be generating, and let $U\subseteq \mathbb T^r$ be an approximate Hamming ball of radius $(k,\eta)$ for some $\eta>0$. For every totally ergodic MPS $(X,\mathcal B,\mu,T)$ and every measurable $f:X\to [0,1]$, there is a cylinder function $g=\frac{1}{m(V)}1_V$ subordinate to $U$ such that
	\begin{equation}\label{eqn:RothApproxWithgAgain}
		\lim_{N\to \infty} \Bigl|\frac{1}{N}\sum_{n=1}^N g(n^2\ell^2\bm\beta)\int f\cdot T^{n} f \cdot T^{2n} f\, d\mu - L_3(f,T)\Bigr|<2k^{-1/2}\|f\|^2.
	\end{equation}

\begin{proof}Let $(X,\mathcal B,\mu,T)$ be a totally ergodic MPS,  $f:X\to [0,1]$, and $k\in \mathbb N$.
Let $BH$ be a proper Bohr-Hamming ball of radius $(k,\eta)$ for some $\eta>0$.  Write $BH$ as $\{n:n\bm\beta\in U\}$, for some  approximate Hamming ball $U\subseteq \mathbb T^r$ of radius $(k,\eta)$ and some generating $\bm\beta\in \mathbb T^r$.  Note that $\ell^2\bm\beta$ is also generating. For a Riemann integrable $g:\mathbb T^r\to \mathbb R$, write
\[
	A(f,g):= \lim_{N\to\infty}\frac{1}{N}\sum_{n=1}^N g(n^2\ell^2\bm\beta)\int f\cdot T^n f\cdot T^{2n} f\, d\mu.
\]
We will prove that there is a cylinder function $g$ subordinate to $U$ such that
\begin{equation}\label{eqn:Goal}
  |A(f,g) - L_3(f,T)|<2k^{-1/2}\|f\|^2.
\end{equation}
Let $M=\frac{1}{m(V)}$, where $V$ is one of the cylinders $V_{I,\mb y,\eta}$ in (\ref{eqn:UunionV}).  In other words, $M = \|g\|_{\infty}$ for each cylinder function $g$ subordinate to $U$.  Choose, by Lemma \ref{lem:ReductionToWeyl}, a factor $\pi:\mb X\to \mb Y=(Y,\mathcal D,\nu,S)$ so that $\mb Y$ is a factor of a standard 2-step Weyl system, and such that for all Riemann integrable $g:\mathbb T^r\to [0,M]$, we have
\begin{equation}\label{eqn:Ycharacteristic}
	|A(f,g)- B(\tilde{f},g)|<\tfrac{1}{2}k^{-1/2}\|f\|^2.
\end{equation}
where $\tilde{f}\circ \pi =P_{\mb Y}f$ and $B(\tilde{f},g):= \lim_{N\to\infty}\frac{1}{N}\sum_{n=1}^Ng(n^2\ell^2\bm\beta)\int \tilde{f} \cdot S^{n}\tilde{f} \cdot S^{2 n}\tilde{f}\, d\nu$. Let
\[C(\tilde{f}):= \lim_{N\to\infty}\frac{1}{N}\sum_{n=1}^N \int \tilde{f}\cdot S^n \tilde{f}\cdot S^{2n}\tilde{f}\, d\nu,\] so that $C(\tilde{f})=B(\tilde{f},\mb 1)$. Note that $A(f,\mb 1)= L_3(f,T)$, so the special case of (\ref{eqn:Ycharacteristic}) with $g= \mb 1$ yields
\begin{equation}\label{eqn:CminusD}
|C(\tilde{f})-L_3(f,T)|<\tfrac{1}{2}k^{-1/2}\|f\|^2.
\end{equation}
Let $\tilde{\mb Y} = (\tilde{Y},\tilde{\mathcal D},\tilde{\nu},\tilde{S})$ be an extension of $\mb Y $ which is a standard 2-step Weyl system $(\mathbb T^d\times \mathbb T^d, \mathcal B_{\mathbb T^d\times \mathbb T^d}, m, \tilde{S})$, and view $\tilde{f}$ as a function on $\tilde{Y}=\mathbb T^d\times \mathbb T^d$ (cf.~Remark \ref{rem:Extensions}). By Proposition \ref{prop:WeakLimit}, there is an affine joining $\Gamma$ of $\mathbb T^d$ with $\mathbb T^r$ such that for each Riemann integrable $g:\mathbb T^r \to \mathbb R$ we have
\[ B(\tilde{f},g) = \int \tilde{f}(x,y)\tilde{f}(x+s,t+y) \tilde{f} *_{\Gamma} g (x+2x,y+2t) \, ds\, dt \, dx\, dy.\]
Let $J$ denote the integral above, define $\tilde{f}':\mathbb T^d\to [0,1]$ by $\tilde{f}'(x):=\int \tilde{f}(x,y)\, dy$, and let
\[
J':= \int \tilde{f}'(x)\tilde{f}'(x+s)\tilde{f}'(x+2s)\, dx\, ds.
\]
Choose, by Lemma \ref{lem:IntegrateToKronecker}, a cylinder function $g$ subordinate to $U$ so that
\begin{equation}\label{eqn:JJprime}
  |J-J'|<k^{-1/2}\|\tilde{f}\|^2.
\end{equation}  Observation \ref{obs:KroneckerWeyl} means $J'= C(\tilde{f})$, so  (\ref{eqn:JJprime}) can be written as
\begin{equation}\label{eqn:BminusC}
|B(\tilde{f},g)-C(\tilde{f})|<k^{-1/2}\|\tilde{f}\|^2.
\end{equation}
Combining (\ref{eqn:BminusC}) with (\ref{eqn:CminusD}), (\ref{eqn:Ycharacteristic}), and the triangle inequality, we get (\ref{eqn:Goal}), completing the proof.
\end{proof}

\section{Auxiliary lemmas }\label{sec:Appendix}

In \S\ref{sec:Compactness} we prove Lemma \ref{lem:RecurrenceCompactness}, essentially by repeating a routine proof of Furstenberg's correspondence principle.  Section \ref{sec:2StepAffine} explains a fact needed in the proof of Lemma \ref{lem:ReductionToAffine}, and \S\ref{sec:Markov} states two immediate consequences of Markov's inequality needed in the proof of Lemma \ref{lem:SqrtBHisrecurrent}.

\subsection{Compactness}\label{sec:Compactness} Here we write $[N]$ for the interval $\{0,1,\dots,N-1\}$ in $\mathbb Z$.
\begin{lemma}\label{lem:RecurrenceEquivalence}
 Let $S\subseteq \mathbb Z$ and $\delta\geq 0$.  The following conditions are equivalent.

  \begin{enumerate}
    \item[(i)]  There is a measure preserving system $(X,\mathcal B,\mu,T)$ and $A\subseteq X$ with $\mu(A)>\delta$ such that $\mu\bigl(\bigcap_{j=0}^k T^{-js}A\bigr)=0$ for all $s\in S$.

    \item[(ii)] $S$ is $(\delta,k)$-nonrecurrent, meaning (i) holds with $\bigcap_{j=0}^k T^{-js}A=\varnothing$ in place of $\mu\bigl(\bigcap_{j=0}^k T^{-js}A\bigr)=0$.

    \item[(iii)] There is a $\delta'>\delta$ such that for all $N\in \mathbb N$, there is a set $B_N\subseteq [N]$ with $|B_N|\geq \delta' N$ such that $\bigcap_{j=0}^{k} (B_N-js)=\varnothing$ for all $s\in S$.
  \end{enumerate}
\end{lemma}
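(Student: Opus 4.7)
Since $\mu(\varnothing)=0$, the implication (ii) $\Rightarrow$ (i) is immediate. I close the circle by proving (i) $\Rightarrow$ (ii), (ii) $\Rightarrow$ (iii), and (iii) $\Rightarrow$ (i); the substantive step is the last one, a standard instance of Furstenberg's correspondence principle, which I expect to be the main technical content of the proof.

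For (i) $\Rightarrow$ (ii), set $N_s := \bigcap_{j=0}^k T^{-js}A$, so that $\mu(N_s)=0$ for every $s \in S$ by hypothesis. Since $S$ is countable, $Z := \bigcup_{s \in S} N_s$ has $\mu(Z)=0$, and $B := A \setminus Z$ satisfies $\mu(B)=\mu(A)>\delta$. For each $s \in S$,
\[
\bigcap_{j=0}^k T^{-js}B \;\subseteq\; B \cap \bigcap_{j=0}^k T^{-js}A \;=\; B \cap N_s \;\subseteq\; B \cap Z \;=\; \varnothing,
\]
so $(X,\mathcal B,\mu,T)$ together with $B$ witnesses (ii).

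For (ii) $\Rightarrow$ (iii), put $\delta' := \mu(A) > \delta$ and $B_N(x) := \{n \in [N] : T^n x \in A\}$ for $x \in X$. Fubini gives $\int |B_N(x)|\, d\mu(x) = N\mu(A) = \delta' N$, so some $x \in X$ satisfies $|B_N(x)| \geq \delta' N$. Moreover, for every $s \in S$,
\[
\bigcap_{j=0}^k (B_N(x)-js) \;=\; \{n \in [N] : T^n x \in \textstyle\bigcap_{j=0}^k T^{-js}A\} \;=\; \varnothing
\]
because the inner intersection is empty by (ii); thus $B_N(x)$ is the desired set.

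For (iii) $\Rightarrow$ (i), I would apply Furstenberg's correspondence principle on the compact space $\Omega := \{0,1\}^{\mathbb Z}$ with the left shift $\sigma$. For each $N$, extend $\mb 1_{B_N}$ by $0$ outside $[N]$ to an element $\omega_N \in \Omega$, and set $\mu_N := \frac{1}{N}\sum_{n \in [N]} \delta_{\sigma^n \omega_N}$. Weak-$*$ compactness of the space of Borel probability measures on $\Omega$ yields a subsequential limit $\mu_{N_k} \to \mu$, and the standard telescoping bound $|\mu_N(\sigma^{-1}E) - \mu_N(E)| \leq 2/N$ shows $\mu$ is $\sigma$-invariant. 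Taking the clopen cylinder $A := \{\omega \in \Omega : \omega(0) = 1\}$, weak-$*$ convergence on clopen sets gives $\mu(A) = \lim_k |B_{N_k}|/N_k \geq \delta' > \delta$, and for each $s \in S$ applying convergence to $\bigcap_{j=0}^k \sigma^{-js}A$ yields
\[
\mu\Bigl(\bigcap_{j=0}^k \sigma^{-js}A\Bigr) \;=\; \lim_k \frac{1}{N_k}\Bigl|[N_k] \cap \bigcap_{j=0}^k (B_{N_k}-js)\Bigr| \;=\; 0,
\]
since the bracketed intersection in $\mathbb Z$ is empty by hypothesis. The key bookkeeping is that $\omega_N(m) = 1$ iff $m \in B_N$ (the zero extension prevents spurious occurrences outside $[N]$), so the symbolic count is governed exactly by the combinatorial emptiness in $\mathbb Z$ assumed in (iii).
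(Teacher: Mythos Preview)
Your proof is correct and follows essentially the same route as the paper: remove the null intersections to pass from (i) to (ii), average to find a good orbit for (ii) $\Rightarrow$ (iii), and run Furstenberg correspondence on $\{0,1\}^{\mathbb Z}$ for (iii) $\Rightarrow$ (i). One cosmetic point: in (ii) $\Rightarrow$ (iii) your displayed identity should be a containment $\subseteq$ rather than an equality (the left side also imposes $n+js\in[N]$ for every $j$), but since the right side is already empty the conclusion is unaffected.
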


\begin{proof}
To prove (i) implies (ii), let $A$ satisfy condition (i), and let $A':=A\setminus \bigcup_{s\in S} \bigcap_{j=0}^k T^{-js}A$.  Then $\mu(A')=\mu(A)>\delta$, while $A'\subseteq \bigcap_{j=0}^k T^{-js}A' \subseteq \bigcap_{j=0}^k T^{-js}A$ for every $s\in S$. Since $A'$ is both a subset of and disjoint from $\bigcap_{j=0}^k T^{-js}A$, we have $\bigcap_{j=0}^k T^{-js}A'=\varnothing$ for every $s\in S$.

To prove (ii) implies (iii),  suppose $A$ satisfies condition (ii). Let $\delta'$ be such that $\mu(A)>\delta'>\delta$. Fixing $x\in X$ and setting $A_x:=A\cap \{T^nx:n\in \mathbb N\}$, we have $\bigcap_{j=0}^{k} T^{-jn}A_x=\varnothing$. Setting $B_x:=\{n\in \mathbb Z:T^nx\in A\}$, we have $\bigcap_{j=0}^{k} (B_x-jn) = \{n\in \mathbb Z: T^nx \in \bigcap_{j=0}^k T^{-jn}A\}$.  Thus $\bigcap_{j=0}^{k} (B_x-jn)=\varnothing$ whenever $\bigcap_{j=0}^k T^{-jn}A=\varnothing$.

Set $F_N:=\frac{1}{N}\sum_{n=0}^{N-1} 1_A(T^nx)$.   Then $\int F_N(x)\, d\mu(x) = \mu(A).$
It follows that there is an $x\in X$ such that $F_N(x) \geq \delta'$.  Our definition of $F_N$ then implies $|B_x\cap [N]| \geq \mu(A)N$.

To prove (iii) implies (i), suppose condition (iii) holds.  Let $X=\{0,1\}^\mathbb Z$ with the product topology, and let $\mathcal B$ be the corresponding Borel $\sigma$-algebra.  Let $T:X\to X$ be the left shift, meaning $(Tx)(n)=x(n+1)$.  We will construct a Borel probability measure $\mu$ on $(X,\mathcal B)$ and find a clopen set $A\subseteq X$ satisfying (i).

Let $A:=\{x\in X:x(0)=1\}$ (so $A$ is the cylinder set where $1$ appears at index $0$).  For each $N\in \mathbb N$, let $y_N:=1_{B_N}\in X$.  Note that $1_A(T^ny_N)=1$ if and only if $n\in B_N$, and similarly
\begin{align}\label{eqn:BNintersect}
  1_{A \cap T^{-s}A \cap \cdots \cap T^{-ks}A} (T^n y_N) = 1 \quad \text{if and only if} \quad n\in \bigcap_{j=0}^{k} (B_N-js).
\end{align}
Form a measure $\mu_N$ on $X$ defined by
\[
\int f\, d\mu_N:= \frac{1}{N}\sum_{n=0}^{N-1}f(T^ny_N).
\]
Let $\mu$ be a weak$^*$ limit of the $\mu_N$ (i.e. choose a convergent subsequence of $\mu_N$ and let $\mu$ be the limit).  To see that $\mu$ is $T$-invariant, note that
\[
\Bigl|\int f\circ T\, d\mu_N - \int f\, d\mu_N\Bigr| =\frac{1}{N}|f(T^Ny_N) - f(y_N)|\leq \frac{2}{N}\sup |f|
\]
for every $N$, so $\int f\circ T\, d\mu = \int f\, d\mu$ for every bounded continuous $f$.  In particular, $\mu(T^{-1}C)=\int 1_C\circ T\, d\mu = \int 1_C\, d\mu = \mu(C)$ for every clopen set $C\subseteq X$.  Since the clopen subsets of $X$ generate the Borel $\sigma$-algebra of $X$, this proves that $T$ preserves $\mu$.

To see that $\mu(A)\geq \delta'$, note that
\[\mu(A) \geq \liminf_{N\to\infty} \frac{1}{N}\sum_{n=0}^{N-1}1_A(T^ny_N) \geq \liminf_{N\to\infty} \frac{1}{N}|B_N|\geq \delta'.\]
To prove that $\mu\Bigl(\bigcap_{j=0}^{k}T^{-js}A\Bigr)=0$ for all $s\in S$, fix $s\in S$ and note that (\ref{eqn:BNintersect}) implies
\[
\mu_N\Bigl(\bigcap_{j=0}^{k}T^{-js}A\Bigr) = \frac{1}{N}\sum_{n=0}^{N-1} 1_{A\cap T^{-s}A\cap \cdots \cap T^{-ks}A}(T^ny_N) \leq \frac{1}{N}\Bigl|\bigcap_{j=0}^k(B_N-js)\Bigr|=0
\]
for all $N\in \mathbb N$.  Since $C:=\bigcap_{j=0}^k T^{-js} A$ is clopen and $\mu$ is a weak$^*$ limit of the $\mu_N$, we have $\mu(C)=\lim_{N\to\infty} \mu_N(C)=0$. \end{proof}

Recall the statement of Lemma \ref{lem:RecurrenceCompactness}: if $k\in \mathbb N$, $0\leq \delta<\delta'$ and $S\subseteq \mathbb Z$ is such that every finite subset of $S$ is $(\delta',k)$-nonrecurrent, then $S$ is $(\delta,k)$-nonrecurrent.

\begin{proof}[Proof of Lemma \ref{lem:RecurrenceCompactness}]

Suppose $S\subseteq \mathbb Z$, $k\in \mathbb N$, $0\leq \delta<\delta'$, and that every finite subset of $S$ is $(\delta',k)$-nonrecurrent. Applying Lemma \ref{lem:RecurrenceEquivalence} to the finite set $S_N:=S\cap [-N,N]$, we may choose, for each $N$, a set $B_N\subseteq [N]$ such that $|B_N|>\delta'N$ and $\bigcap_{j=0}^N (B_N-js)=\varnothing$ for all $s\in S\cap [-N,N]$.  Note that this implies $\bigcap_{j=0}^N (B_N-js)=\varnothing$ for all $s\in S$, since $B_N-js$ is disjoint from $[N]$ for every $s\in S\setminus [-N,N]$.  This means $S$ satisfies condition (iii) of Lemma \ref{lem:RecurrenceEquivalence}, and we conclude that $S$ is $\delta$-nonrecurrent.
\end{proof}

\subsection{The 2 step affine factor of a totally ergodic nilsystem}\label{sec:2StepAffine}
A \emph{nilsystem} is an MPS $(Y,\mathcal D,\nu,S)$ where $Y=G/\Gamma$, with $G$ a nilpotent Lie group and $\Gamma$ a cocompact discrete subgroup, $\mathcal D$ is the Borel $\sigma$-algebra of $Y$, $\nu$ is the unique probability measure on $(Y,\mathcal D)$ invariant under left multiplication, and $Sy = a y$ for some fixed $a\in G$.

When $G$ is a topological group, we write $G_0$ for the connected component of the identity.  For Lie groups, $G_0$ is a closed subgroup of $G$.   We will use the fact that an ergodic nilsystem $(G/\Gamma,\mathcal B,\mu,T)$ is totally ergodic if and only if $G/\Gamma$ is connected.

 Lemma \ref{lem:2AofY} identifies the maximal $2$-step affine factor of a totally ergodic nilsystem; the purpose of this subsection is to explain how it follows from the results of \cite{FrantzikinakisThreePoly}, where it is essentially proved but not explicitly stated.

\begin{lemma}\label{lem:2AofY}
  Let $\mb X=(X,\mathcal B,\mu,T)$ be a totally ergodic nilsystem.  The maximal $2$-step affine factor $\mb A_2(\mb X)$ of $\mb X$ is isomorphic to $(\mathbb T^d,\mathcal B,m,A)$, where $d\in \mathbb N$ and $A:\mathbb T^d\to \mathbb T^d$ is a $2$-step unipotent affine transformation.
\end{lemma}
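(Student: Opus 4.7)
The plan is to combine the description of the maximal $2$-step affine factor of a nilsystem appearing in \cite{FrantzikinakisThreePoly} with elementary structure theory of compact abelian Lie groups. Any factor of a nilsystem is again a nilsystem, so $\mb A_2(\mb X)$ is simultaneously a nilsystem and a $2$-step affine system. As explained in \cite{FrantzikinakisThreePoly}, for an ergodic nilsystem the maximal $2$-step affine factor can be realized concretely: after reducing to the $2$-step Host--Kra factor $\mb Z_2(\mb X)$, which is itself a $2$-step nilsystem on a finite-dimensional nilmanifold $G/\Gamma$ with $G$ $2$-step nilpotent, abelianizing the group structure yields a transformation of the form $h\mapsto Bh + b$ on a compact abelian Lie group $H$, with $B$ a continuous group endomorphism satisfying $(B-I)^2 = 0$.

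Next, total ergodicity is inherited by factors: if $(X,\mathcal B,\mu,T^m)$ is ergodic then so is $(Y,\mathcal D,\nu,S^m)$ for any factor $\pi:\mb X\to \mb Y$. Applied to $\mb A_2(\mb X)$, this means the affine system on $H$ is totally ergodic. For a nilsystem this is equivalent to connectedness of the phase space, since the connected components of a nilmanifold are permuted by the acting transformation and ergodicity of the iterates forces a single component. Hence $H$ is connected.

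A connected compact abelian Lie group is isomorphic as a topological group to $\mathbb T^d$ for some $d\in \mathbb N$. Transporting $B$ and $b$ through such an isomorphism produces a matrix in $\mathrm{GL}_d(\mathbb Z)$ (still satisfying $(B-I)^2 = 0$) together with a translation vector in $\mathbb T^d$, so $\mb A_2(\mb X)$ is isomorphic to $(\mathbb T^d,\mathcal B,m,A)$ for a $2$-step unipotent affine transformation $A$, as desired. The main obstacle is the first step: extracting from the abstract definition of $\mb A_2$ a concrete realization as an affine transformation on a \emph{finite-dimensional} compact abelian Lie group, rather than on a solenoid or an infinite-dimensional torus. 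This finite-dimensionality is what distinguishes the nilsystem setting from a general totally ergodic system; it is precisely the reason the proof of Lemma \ref{lem:ReductionToWeyl} first reduces to a nilsystem via Lemma \ref{lem:BHK46} before invoking Lemma \ref{lem:2AofY} and Lemma \ref{lem:FrKrStandard}.
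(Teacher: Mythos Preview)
Your outline follows essentially the same route as the paper, but the step you label ``abelianizing the group structure'' is doing more work than you acknowledge, and the order in which you invoke connectedness is reversed from what is actually needed.

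What Proposition~2.4 of \cite{FrantzikinakisThreePoly} actually provides is an identification of the $\mb A_2(\mb X)$-measurable functions with the functions on the quotient nilmanifold $X' := G/(G_3[G_0,G_0]\Gamma)$. Writing $G' := G/(G_3[G_0,G_0])$, this $G'$ is a $2$-step nilpotent Lie group whose \emph{identity component} $G'_0$ is abelian, but $G'$ itself need not be abelian, and $X' = G'/\Gamma'$ is a nilmanifold, not a priori a compact abelian group carrying an affine map. The passage from ``connected nilmanifold with $G'_0$ abelian'' to ``unipotent affine transformation on a finite-dimensional torus'' is a separate structural result, namely \cite[Proposition~3.1]{FrKrPolyAffine} (quoted in the paper just before Remark~\ref{rem:Step}), and that proposition already takes connectedness of the nilmanifold as a hypothesis. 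So the argument must run: total ergodicity of $\mb X$ is inherited by the factor on $X'$, hence $X'$ is connected, and \emph{then} \cite{FrKrPolyAffine} delivers the torus realization directly (with $2$-step unipotence coming from $G'$ being $2$-step, as in Remark~\ref{rem:Step}). Your version instead first asserts a realization on a possibly disconnected compact abelian Lie group $H$ and only afterward uses connectedness to identify $H$ with $\mathbb T^d$; but that first assertion is not what any literal ``abelianizing'' of $G$ produces, and is not supplied by \cite{FrantzikinakisThreePoly} without connectedness already in hand.
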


We will use the following standard fact about factors: let $\pi_i:\mb X\to \mb X_i = (X_i,\mathcal B_i,\nu_i,T_i),$  $i=1,2$, be two factors of a system where $(X_i,\mathcal B_i,\nu_i)$ are separable as measure spaces. Then $\mb X_1$ and $\mb X_2$ are isomorphic (as measure preserving systems) if the algebra of bounded $\mb X_1$-measurable functions is equal, up to $\mu$-measure $0$, to the algebra of bounded $\mb X_2$-measurable functions. We also need the following lemma from \cite{FrKrPolyAffine}.

\begin{lemma}[\cite{FrKrPolyAffine}, Proposition 3.1]
  Let $X=G/\Gamma$ be a connected nilmanifold such that $G_0$ is abelian.  Then any nilrotation $T_a(x)=ax$ defined on $X$ with Haar measure $\mu$ is isomorphic to a unipotent affine transformation $U$ on some finite dimensional torus.
\end{lemma}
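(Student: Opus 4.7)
The plan is to realize $\mb A_2(\mb X)$ as a nilsystem satisfying the hypotheses of the cited Proposition 3.1 of \cite{FrKrPolyAffine}, and then invoke that proposition directly.

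First, write $\mb X$ concretely as $(G/\Gamma,\mathcal B,\mu,T_a)$ for some nilpotent Lie group $G$, cocompact discrete subgroup $\Gamma$, and $a\in G$. Since factors of totally ergodic systems are totally ergodic, $\mb A_2(\mb X)$ is totally ergodic. I would then invoke the structure theory of nilsystems from \cite{FrantzikinakisThreePoly}: for any nilsystem, the maximal $k$-step affine factor is itself realizable as a $k$-step nilsystem, obtained as $(G'/\Gamma',\mathcal B',\mu',T_{a'})$ where $G'$ is a quotient of $G$ by an appropriate closed normal subgroup (essentially the commutators living inside the connected component that obstruct the affine structure). So $\mb A_2(\mb X)$ is a $2$-step nilsystem, and because total ergodicity of a nilsystem is equivalent to connectedness of the phase space, the nilmanifold $G'/\Gamma'$ is connected.

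Second, I would verify that $(G')_0$ is abelian. This is the defining property of the \emph{affine} piece of the nilpotent structure: $\mb A_2$ is precisely the largest factor that can be presented as an affine system on a compact abelian group, and the construction in \cite{FrantzikinakisThreePoly} ensures the quotient group $G'$ has abelian identity component. Concretely, any failure of abelian-ness in $(G')_0$ would produce non-affine multiplicative structure surviving in $\mb A_2$, contradicting maximality. This step relies on tracing definitions from \cite{FrantzikinakisThreePoly} rather than any new computation.

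Finally, with $G'/\Gamma'$ connected and $(G')_0$ abelian, the hypotheses of the cited Proposition 3.1 of \cite{FrKrPolyAffine} are met, so $T_{a'}$ is isomorphic to a unipotent affine transformation $A$ on some finite-dimensional torus $\mathbb T^d$; since $G'$ is $2$-step nilpotent, $A$ is $2$-step unipotent. The main obstacle is the second step: matching the abstract notion of ``maximal $2$-step affine factor'' with a concrete nilsystem-theoretic quotient that makes $(G')_0$ abelian. Once that identification is made, the rest is a direct appeal to the cited structural results.
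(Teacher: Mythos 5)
There is a genuine gap: you have not proved the statement at all. The statement to be established is Proposition 3.1 of \cite{FrKrPolyAffine} itself --- that a nilrotation $T_a$ on a connected nilmanifold $X=G/\Gamma$ whose identity component $G_0$ is abelian is measurably isomorphic to a unipotent affine transformation on a finite-dimensional torus. Your proposal instead takes this proposition as a black box (``invoke that proposition directly'') and uses it to deduce the paper's Lemma \ref{lem:2AofY} about the maximal $2$-step affine factor $\mb A_2(\mb X)$ of a totally ergodic nilsystem. That is a different lemma (and indeed it is essentially how the paper argues in \S\ref{sec:2StepAffine}, via Proposition 2.4 of \cite{FrantzikinakisThreePoly} to identify $\mb A_2(\mb X)$ with a quotient nilsystem $G'/\Gamma'$ with $(G')_0$ abelian, followed by the cited Proposition 3.1). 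As an argument for the statement in question it is circular: nothing in your write-up constructs the torus, the affine map, or the isomorphism, and the hypothesis that $G_0$ is abelian is never used to do any work.

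A proof of the actual statement has to engage with the algebra of $G$. The standard route (the one carried out in \cite{FrKrPolyAffine}) is roughly: reduce to the case where $G$ is generated by $G_0$ and the element $a$ (connectedness of $X$ lets one do this without changing the system); identify $X$ with a quotient of the connected abelian Lie group $G_0$ by a cocompact lattice, which is a finite-dimensional torus $\mathbb T^d$; observe that under this identification $T_a$ acts as $x\mapsto Ax+b$, where $A$ is induced by conjugation by $a$ on $G_0$; and finally use nilpotency of $G$ to conclude that this automorphism $A$ is unipotent (with degree of unipotency controlled by the nilpotency class, which is the point recorded in Remark \ref{rem:Step}). None of these steps appears in your proposal, so as it stands the proposal establishes nothing about the statement it was meant to prove.
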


\begin{remark}\label{rem:Step} The computation in \cite{FrKrPolyAffine} showing that the transformation $U$ is unipotent also shows that when $G$ is $k$-step nilpotent, $U$ is $k$-step unipotent.
\end{remark}

We now explain how Lemma \ref{lem:2AofY} follows from \cite{FrantzikinakisThreePoly}.  Let $\mb X$ be a totally ergodic nilsystem, $\mb X = (X,\mathcal B,\mu,T)$, where $X = G/\Gamma$, $G$ being a nilpotent Lie group, $\Gamma$ a cocompact lattice in $G$,  and $\mu$ the unique left-translation invariant Borel probability measure on $G/\Gamma$, $Tx\Gamma:=ax\Gamma$ for some fixed $a\in G$. Proposition 2.4 of \cite{FrantzikinakisThreePoly} shows that the algebra of functions measurable with respect to $\mb A_2(\mb X)$ coincides with the functions measurable with respect to the factor $\pi_2:\mb X\to \mb Y$, where $\mb Y = (X',\mathcal B', \mu',T')$,  $X':=G/(G_3[G_0,G_0]\Gamma)$, the factor map is given by $\pi_2(x\Gamma):= xG_3[G_0,G_0]\Gamma$, and $T'y = \pi_2(a)y$.   Furthermore, it is easy to verify (given the background suggested in \S2.2 of \cite{FrantzikinakisThreePoly}) that $X'$ can be written as $G'/\Gamma'$, where $\Gamma'$ is a cocompact lattice in $G':=G/(G_3[G_0,G_0])$, and $G'$ is a 2-step nilpotent Lie group with abelian identity component.   Proposition 3.1 of \cite{FrKrPolyAffine} (cf.~Remark \ref{rem:Step} above) says that $\mb Y$ is isomorphic to a $2$-step unipotent affine transformation $A$ on a finite dimensional torus.  Since the $\mb A_2(\mb X)$-measurable functions coincide with the $\mb Y$-measurable functions, we get that $\mb A_2(\mb X)$ is itself isomorphic to $\mb Y$.

\subsection{Consequences of Markov's inequality}\label{sec:Markov}

Let $(X,\mu)$ be a probability space partitioned into subsets $X_i$, $0\leq i \leq M-1$, with $\mu(X_i)=1/M$ for each $i$, and let $f:X\to [0,1]$ have $\int f\, d\mu>\delta$.  Let $f_i:=f1_{X_i}$.

\begin{lemma}\label{lem:Markov1}
With $X$, $f$, and $X_i$ specified above, let $I:=\{i: \int f_i\, d\mu> \frac{\delta}{2M}\}$.  Then $|I|> M\delta/2$.
\end{lemma}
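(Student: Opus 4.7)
The plan is to apply Markov's inequality in its most elementary form: split the total integral $\int f\,d\mu$ according to the partition and separately bound the contributions from $I$ and from its complement.

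First I would write
\[
\int f\,d\mu = \sum_{i=0}^{M-1} \int f_i\,d\mu,
\]
which holds because the $X_i$ partition $X$ and $f_i = f\cdot 1_{X_i}$. For indices $i \notin I$, the definition of $I$ gives $\int f_i\,d\mu \leq \frac{\delta}{2M}$, so the contribution to the sum from $\{0,\dots,M-1\}\setminus I$ is at most $(M-|I|)\cdot \frac{\delta}{2M} \leq \frac{\delta}{2}$. For indices $i\in I$, I would use the crude pointwise bound $0\leq f\leq 1$ together with $\mu(X_i)=1/M$ to get $\int f_i\,d\mu \leq \mu(X_i) = \frac{1}{M}$, so the contribution from $I$ is at most $\frac{|I|}{M}$.

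Combining these two bounds with the hypothesis $\int f\,d\mu > \delta$ yields
\[
\delta < \int f\,d\mu \leq \frac{|I|}{M} + \frac{\delta}{2},
\]
and rearranging gives $|I| > \frac{M\delta}{2}$, as required.

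There is no genuine obstacle here; the only thing to keep track of is the two-sided pinch (a uniform upper bound $1/M$ on each $\int f_i\,d\mu$ coming from $f\leq 1$, and the small upper bound $\delta/(2M)$ for $i\notin I$). A parallel lemma for the quantity $\int |f_i - P_{\mb Y}f_i|\,d\mu$ (Lemma \ref{lem:Markov2}), which is cited in the proof of Lemma \ref{lem:SqrtBHisrecurrent} to produce an index $i\in I\cap J$, will follow from the same Markov-type split applied to the nonnegative function $|f - P_{\mb Y}f|$ with total integral less than $\varepsilon$.
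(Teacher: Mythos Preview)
Your proof is correct and is essentially the same as the paper's: both split $\int f\,d\mu$ over $I$ and its complement, bound the terms by $1/M$ and $\delta/(2M)$ respectively, and rearrange. Your version is marginally cleaner in that you immediately bound $(M-|I|)\frac{\delta}{2M}\leq \frac{\delta}{2}$, whereas the paper carries the exact term $\frac{\delta}{2}+\frac{|I|}{M}\bigl(1-\frac{\delta}{2}\bigr)$ one step further before concluding.
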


\begin{proof}  Let $I':=\{0,\dots,M-1\}\setminus I$.  Note that
\[\delta<\int f\, d\mu = \sum_{i\in I'} \int f_i\, d\mu + \sum_{i\in I} \int f_i\, d\mu \leq  \sum_{i\in I'} \frac{\delta}{2M} + \sum_{i\in I} \frac{1}{M} = \frac{\delta}{2M}(M-|I|)+\frac{1}{M}|I|,
\] so $\delta < \frac{\delta}{2} + \frac{|I|}{M}\bigl(1-\frac{\delta}{2}\bigr)$.  This can be rearranged to $M\delta/2 < |I|(1-\delta/2)$, which implies $M\delta/2<|I|$.
\end{proof}

\begin{lemma}\label{lem:Markov2}
	With $X$ and $X_i$ as defined above, let $c, \varepsilon>0$ and assume $f, g:X\to \mathbb R$ satisfy $\|f-g\|_{L^1(\mu)}<\varepsilon$. Define
\[J := \Bigl\{i: \int_{X_i} |f-g|\, d\mu< \frac{c}{M} \Bigr\}.\]  Then $|J| > M(1-\frac{\varepsilon}{c})$.
\end{lemma}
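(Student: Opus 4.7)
The plan is to apply Markov's inequality directly, in the same style as the proof of Lemma \ref{lem:Markov1}. Let $J':=\{0,\dots,M-1\}\setminus J$, so that for every $i\in J'$ we have $\int_{X_i} |f-g|\, d\mu \geq \tfrac{c}{M}$. Summing these lower bounds over $i\in J'$ and using that the $X_i$ partition $X$ gives
\[
\varepsilon > \int_X |f-g|\, d\mu \;=\; \sum_{i=0}^{M-1} \int_{X_i} |f-g|\, d\mu \;\geq\; \sum_{i\in J'} \frac{c}{M} \;=\; \frac{c\,|J'|}{M}.
\]
Rearranging yields $|J'| < \tfrac{M\varepsilon}{c}$, hence $|J| = M - |J'| > M\bigl(1 - \tfrac{\varepsilon}{c}\bigr)$, which is the desired inequality.

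There is essentially no obstacle: this is a one-line Markov estimate, and the only thing to be careful about is the direction of the strict inequality, which is preserved because the hypothesis $\|f-g\|_{L^1(\mu)}<\varepsilon$ is strict. No additional hypotheses on $f$ or $g$ (such as boundedness) are needed, since we only use nonnegativity of $|f-g|$ and the fact that the $X_i$ form a partition of $X$ of equal measure $1/M$.
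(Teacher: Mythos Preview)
Your proof is correct and essentially identical to the paper's own argument: both pass to the complement $J'$, use $\sum_i \int_{X_i}|f-g|\,d\mu = \|f-g\|_{L^1(\mu)} < \varepsilon$ together with the lower bound $c/M$ on each term indexed by $J'$, and conclude $|J'| < M\varepsilon/c$.
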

\begin{proof}
	We estimate $J'$, where $J':=\{0,\dots,M-1\}\setminus J$.  Let $\varepsilon_i := \int_{X_i} |f-g|\, d\mu$.
	
	Note that $ \sum_{i=0}^{M-1} \varepsilon_i=\|f-g\|_{L^1(\mu)} < \varepsilon$, so $J'=\{i : \varepsilon_i \geq c/M\}$ satisfies $|J'|\cdot c/M<\varepsilon$, meaning  $|J'|<M\varepsilon/c$.  Thus $|J|=M-|J'|>M(1-\frac{\varepsilon}{c})$.
\end{proof}

The next lemma is an immediate consequence of the triangle inequality and the identity
\[
f_1 f_2\cdots f_k - h_1h_2\cdots h_k = \sum_{i=1}^k f_1\cdots f_{i-1}(f_i-h_i)h_{i+1}\cdots h_{k}.
\]
\begin{lemma}\label{lem:MultiBound}
  If $(X,\mu)$ is a probability space, $f_i, h_i: X\to [0,1]$, $i=1,\dots,k$, and $\|f_i-h_i\|_{L^1(\mu)}<\varepsilon$ for each $i$, then $|\int f_1 f_2\cdots f_k\, d\mu - \int h_1h_2\cdots h_k\, d\mu|<k\varepsilon$.
\end{lemma}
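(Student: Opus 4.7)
The plan is to use exactly the telescoping identity displayed immediately before the lemma, together with pointwise bounds from the assumption that each $f_j, h_j$ takes values in $[0,1]$. Specifically, I would first verify the identity
\[
f_1 f_2\cdots f_k - h_1 h_2 \cdots h_k = \sum_{i=1}^{k} f_1\cdots f_{i-1}(f_i - h_i) h_{i+1}\cdots h_k
\]
by induction on $k$ (or by recognizing it as a standard telescoping sum obtained from the partial products $P_i := f_1\cdots f_i h_{i+1}\cdots h_k$, noting that $P_i - P_{i-1}$ is the $i$-th summand on the right).

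Next, I would take absolute values and apply the triangle inequality pointwise. Since each $f_j$ and $h_j$ maps into $[0,1]$, both $|f_1\cdots f_{i-1}| \leq 1$ and $|h_{i+1}\cdots h_k| \leq 1$ pointwise, so
\[
|f_1 f_2\cdots f_k - h_1 h_2 \cdots h_k| \leq \sum_{i=1}^k |f_i - h_i|
\]
pointwise on $X$.

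Finally, integrating this inequality with respect to $\mu$ and applying the hypothesis $\|f_i - h_i\|_{L^1(\mu)} < \varepsilon$ for each $i$ yields
\[
\Bigl|\int f_1 f_2 \cdots f_k\, d\mu - \int h_1 h_2\cdots h_k\, d\mu\Bigr| \leq \int \Bigl|f_1\cdots f_k - h_1\cdots h_k\Bigr|\, d\mu \leq \sum_{i=1}^k \|f_i - h_i\|_{L^1(\mu)} < k\varepsilon,
\]
which is exactly the claimed bound. There is no genuine obstacle here; the only mild point to be careful about is making sure both bounding factors in each summand of the telescoping identity are $\leq 1$, which is guaranteed by the hypothesis that all functions take values in $[0,1]$ (the lemma would fail with a worse constant if, say, $f_i, h_i$ were only in $[0, M]$, giving a bound of $kM^{k-1}\varepsilon$).
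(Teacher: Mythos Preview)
Your proof is correct and follows exactly the approach indicated in the paper, which simply states that the lemma is an immediate consequence of the triangle inequality and the telescoping identity you use. Your write-up fills in the details precisely as intended.
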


\subsection{Convergence with Riemann integrable coefficients}
Let $r\in \mathbb N$.  We say that a sequence $(y_n)_{n\in \mathbb N}$ of elements of $\mathbb T^r$ is \emph{uniformly distributed} if
\[\lim_{N\to\infty} \frac{1}{N}\sum_{n=1}^N g(y_n)=\int g\, dm\] for every continuous $g:\mathbb T^r\to \mathbb C$, where $m$ is Haar probability measure on $\mathbb T^d$.
\begin{lemma}\label{lem:RiemannCoefficients}
  Let $r\in \mathbb N$, let $(y_n)_{n\in \mathbb N}$ be a uniformly distributed sequence of elements of $\mathbb T^r$.  If $(v_n)_{n\in \mathbb N}$ is a bounded sequence of real numbers such that $L(g):=\lim_{N\to\infty} \frac{1}{N}\sum_{n=1}^N g(y_n)v_n$ exists for every continuous $g:\mathbb T^r\to \mathbb R$, then $L(g)$  exists for all Riemann integrable $g$.

   Furthermore, if $h_0^{(k)}, h_1^{(k)}$ are continuous functions on $\mathbb T^r$ with $h_0^{(k)}\leq g \leq h_1^{(k)}$ pointwise and $\lim_{k\to\infty} \int h_1^{(k)} -h_0^{(k)}\, dm=0$, then $L(g)=\lim_{k\to\infty} L(h_0^{(k)})=\lim_{k\to\infty} L(h_1^{(k)})$.

   Finally, if $C>0$ and $|L(g)|\leq C$ for every continuous $g:\mathbb T^r \to [0,1]$ then $|L(g)|\leq C$ for every Riemann integrable $g:\mathbb T^r\to [0,1]$.
\end{lemma}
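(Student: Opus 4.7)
The plan is a standard sandwich argument: since $g$ is Riemann integrable, it can be squeezed between continuous functions whose $L^1$-gap tends to zero, and uniform distribution of $(y_n)$ converts control of this gap into convergence of the averages $\frac{1}{N}\sum_{n=1}^N g(y_n) v_n$.

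I would first reduce to the case $v_n \geq 0$. With $M_0 := \sup_n |v_n|$, set $w_n := v_n + M_0 \geq 0$. For continuous $g$ one has
\[
\lim_N \frac{1}{N}\sum_{n=1}^N g(y_n) w_n = L(g) + M_0 \int g\, dm,
\]
using the hypothesis on $L$ together with uniform distribution of $(y_n)$. Proving the conclusions for the nonnegative bounded sequence $(w_n)$ then transfers them back to $(v_n)$ via $L(g) = \bigl(\lim_N \frac{1}{N}\sum g(y_n) w_n\bigr) - M_0 \int g\, dm$; the only supplementary fact needed is $\lim_N \frac{1}{N}\sum g(y_n) = \int g\, dm$ for Riemann integrable $g$, which is the $v_n \equiv 1$ instance of the main argument and hence proved along the way.

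So assume $0 \leq v_n \leq M$ for all $n$. For Riemann integrable $g$, the Lebesgue criterion on the compact set $\mathbb{T}^r$ yields continuous $h_0^{(k)} \leq g \leq h_1^{(k)}$ with $\int (h_1^{(k)} - h_0^{(k)})\, dm \to 0$ (mollifying upper and lower step-function approximations gives one explicit construction; truncating if necessary keeps them in any prescribed interval). Pointwise sandwiching combined with $v_n \geq 0$ gives
\[
\frac{1}{N}\sum_{n=1}^N h_0^{(k)}(y_n) v_n \leq \frac{1}{N}\sum_{n=1}^N g(y_n) v_n \leq \frac{1}{N}\sum_{n=1}^N h_1^{(k)}(y_n) v_n,
\]
and since the outer averages converge to $L(h_i^{(k)})$, taking $\liminf$ and $\limsup$ in $N$ yields
\[
L(h_0^{(k)}) \leq \liminf_N \frac{1}{N}\sum_{n=1}^N g(y_n)v_n \leq \limsup_N \frac{1}{N}\sum_{n=1}^N g(y_n)v_n \leq L(h_1^{(k)}).
\]
The gap is bounded by
\[
L(h_1^{(k)}) - L(h_0^{(k)}) = \lim_N \frac{1}{N}\sum_{n=1}^N (h_1^{(k)} - h_0^{(k)})(y_n) v_n \leq M \int (h_1^{(k)} - h_0^{(k)})\, dm,
\]
using $v_n \leq M$ and uniform distribution of $(y_n)$ applied to the continuous function $h_1^{(k)} - h_0^{(k)}$. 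Sending $k \to \infty$ collapses $\liminf$ and $\limsup$, yielding existence of $L(g)$ together with the ``furthermore'' clause $L(g) = \lim_k L(h_0^{(k)}) = \lim_k L(h_1^{(k)})$.

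For the final assertion, given Riemann integrable $g : \mathbb{T}^r \to [0,1]$, I would replace $h_0^{(k)}$ and $h_1^{(k)}$ by $\max(h_0^{(k)}, 0)$ and $\min(h_1^{(k)}, 1)$: these are continuous, take values in $[0,1]$, still sandwich $g$, and their integral gap is no larger. The hypothesis $|L(h)| \leq C$ for continuous $h : \mathbb{T}^r \to [0,1]$ then gives $|L(h_i^{(k)})| \leq C$, and passing to $k \to \infty$ yields $|L(g)| \leq C$. There is no genuine obstacle here; the only mild delicacy is bookkeeping---the shift $v_n \mapsto v_n + M_0$ used to force nonnegativity must be undone when invoking the $[0,1]$-valued bound, since that bound concerns the original $L$ and not the shifted functional. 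Running the sandwich argument once in full generality and then deriving the bound directly from the approximants avoids any complication.
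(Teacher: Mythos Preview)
Your proposal is correct and follows essentially the same route as the paper: reduce to nonnegative $v_n$ (the paper writes this as ``$v_n\in[0,1]$ by linearity,'' which is your affine shift plus a scaling), then run the standard sandwich argument using continuous approximants and uniform distribution to control the gap. Your treatment of the ``furthermore'' and ``finally'' clauses is more explicit than the paper's (which simply says ``a nearly identical argument'' and ``follows from the second, by assuming $h_i^{(k)}:\mathbb T^r\to[0,1]$''), but the content is the same.
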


\begin{proof}
Note that it suffices to prove the statement under the additional assumption that $v_n \in [0,1]$ for each $n$.  The general case follows by linearity.

   Let $g:\mathbb T^r\to \mathbb R$ be Riemann integrable.   Let $\varepsilon>0$, and choose continuous $g_0, g_1:\mathbb T^r\to \mathbb R$ so that $g_0\leq g \leq g_1$, $\int g_1-g_0\, dm<\varepsilon $.  Let $A_N(g):=\frac{1}{N}\sum_{n=1}^N g(y_n)v_n$.  We have
  \begin{align}\label{eqn:Lg0Lg1}
  L(g_0)=\lim_{N\to\infty} A_N(g_0) \leq \liminf_{N\to\infty} A_N(g) \leq \limsup_{N\to\infty} A_N(g) \leq \lim_{N\to\infty} A_N(g_1)=L(g_1)
  \end{align}
  and $L(g_1)-L(g_0)  = L(g_1-g_0)\leq \lim_{N\to \infty} \frac{1}{N}\sum_{n=1}^N g_1(y_n)-g_0(y_n) =\int g_1-g_0\, dm < \varepsilon$.  Since $\varepsilon>0$ was arbitrary, this proves that $A_N(g)$ converges, meaning $L(g)$ exists.

  A nearly identical argument will prove the second assertion of the lemma.  The third assertion follows from the second, by assuming $h_i^{(k)}:\mathbb T^r\to [0,1]$.
 \end{proof}

\section{Remarks}\label{sec:Remarks}

\subsection{More general \texorpdfstring{$2$}{2}-recurrence.} We say that $S\subseteq \mathbb Z$ is \emph{good for $k$-recurrence of powers} if for every MPS $(X,\mathcal B,\mu,T)$, every $A\subseteq X$ with $\mu(A)>0$ and all $c_1,\dots,c_k\in \mathbb N$, there is an $n\in S$ such that $A\cap T^{-c_1n}A\cap \dots \cap T^{-c_k n}A\neq \varnothing$.

 Problem 5 of \cite{FrProblems} asks whether $S\subseteq \mathbb Z$ being good for $k$-recurrence of powers implies $S^{\wedge k}$ is a set of measurable recurrence.  Our proof of Theorem \ref{thm:Main} does not immediately resolve this question for $k=2$, since we considered intersections of the form $A\cap T^{-n}A\cap T^{-2n}A$ (i.e. $c_1=1, c_2=2$ only).  We believe that our proof can be modified slightly to construct a set $S$ which is good for $2$-recurrence of powers such that $S^{\wedge 2}$ is not a set of measurable recurrence.

\subsection{Higher order recurrence.}

For $k\geq 3$, one possible approach to Problem 5 of \cite{FrProblems}  would be to prove that the set $S$ we construct in the proof of Theorem \ref{thm:Main} is actually a set of $k$-recurrence, or to prove that our construction necessarily results in a set which is not a set of $k$-recurrence. While our construction does not appear to restrict $\mu(A\cap T^{-n}A\cap T^{-2n}A\cap T^{-3n}A)$ for $n\in S$,  computations and estimates of
\begin{equation}\label{eqn:4term}
\lim_{N\to\infty} \frac{1}{N}\sum_{n=1}^N g(n^2\beta) \int f\cdot f\circ T^n \cdot f\circ T^{2n} \cdot f\circ T^{3n} \, d\mu
\end{equation}
analogous to those in \S\ref{sec:AffineLimits}-\S\ref{sec:MainProof} seem to require more intricate reasoning.  It may not be possible to specialize the limit in (\ref{eqn:4term}) to affine systems.  Perhaps one must consider arbitrary $2$-step totally ergodic nilsystems, or even more general systems.

For $k\geq 3$, our approach to Theorem 1.1 leads to the following natural conjecture, an analogue of Lemma \ref{lem:SqrtBHisrecurrent}.  Here $BH^{1/k}$ denotes $\{n\in \mathbb N: n^k\in BH\}$.
\begin{conjecture}\label{conj:NextStep}
  For all $\delta>0$, there exists $k_0\in \mathbb N$ such that for every $r\in \mathbb N$, every proper Bohr-Hamming Ball $BH:=BH(\bm\beta,\bm y,k,\varepsilon)$ with $k\geq k_0$, $\varepsilon>0$ and $y\in \mathbb T^r$, $BH^{1/k}$ is $(\delta,k)$-recurrent.
\end{conjecture}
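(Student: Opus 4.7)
The blueprint for Lemma \ref{lem:SqrtBHisrecurrent} would carry over, at least formally. It suffices to prove a $k$-analogue of Lemma \ref{lem:BHKroneckerAlternative}: for every $\varepsilon>0$ there is a $k_0\in \mathbb N$ so that for every totally ergodic MPS $(X,\mathcal B,\mu,T)$, every $f:X\to[0,1]$, every $\ell\in\mathbb N$, every generating $\bm\beta\in\mathbb T^r$, and every approximate Hamming ball $U\subseteq\mathbb T^r$ of radius $(k_0,\eta)$, there is a cylinder function $g$ subordinate to $U$ with
\begin{equation*}
\Bigl|\lim_{N\to\infty} \frac{1}{N}\sum_{n=1}^N g(n^k\ell^k\bm\beta)\int f\cdot T^n f\cdots T^{kn}f\,d\mu - L_{k+1}(f,T)\Bigr|<\varepsilon \|f\|^2,
\end{equation*}
where $L_{k+1}(f,T):=\lim_{N\to\infty}\tfrac{1}{N}\sum_{n=1}^N\int f\cdot T^nf\cdots T^{kn}f\,d\mu$. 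A $(k+1)$-term uniform lower bound analogous to Theorem \ref{thm:UniformSz}, available by combining the multidimensional Szemer\'edi theorem with \cite{BHMP}-type arguments, then yields $L_{k+1}(f,T)>c(\delta)$ whenever $\int f\,d\mu>\delta$, and the Markov-inequality bookkeeping from the proof of Lemma \ref{lem:SqrtBHisrecurrent} passes through verbatim to deliver $(\delta,k)$-recurrence of $BH^{1/k}$.

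\textbf{Reductions.} Following \S\ref{sec:MultipleErgodic}, I would first reduce to ergodic, and then to totally ergodic systems via a version of Lemma \ref{lem:TotallyErgodicFactor} together with the Markov splittings of \S\ref{sec:Markov}. Host--Kra theory identifies the $k$-step nilfactor $\mathcal Z_k$ as characteristic for the unweighted $(k+1)$-term average $L_{k+1}$, so $\mathbf X$ may be replaced by an inverse limit of ergodic $k$-step nilsystems. Uniformly approximating the Riemann-integrable $g$ by characters $e(n^k\alpha)$, the weighted average becomes a polynomial multiple ergodic average with exponents $n,2n,\dots,kn$ and quadratic-or-higher weight $e(n^k\alpha)$. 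A Frantzikinakis--Kra-type argument paralleling Lemma \ref{lem:ReductionToAffine} (based on an extension of Theorem B of \cite{FrantzikinakisThreePoly} to this polynomial family) should then show that $\mathcal Z_k$, or its maximal $k$-step affine subfactor when one exists, remains characteristic --- modulo a Kronecker-type resonance condition on $\bm\beta$ that is handled exactly as in the first case of the proof of Lemma \ref{lem:ReductionToAffine}. This reduces matters to computing an explicit limit formula on an arbitrary totally ergodic $k$-step nilsystem $G/\Gamma$.

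\textbf{Main obstacle.} For $k=2$, Lemma \ref{lem:FrKrStandard} embeds the maximal $2$-step affine factor into a standard $2$-step Weyl system, whose explicit orbit formula \eqref{eqn:2stepWeylOrbit} powers the closed-form joining computation of Proposition \ref{prop:WeakLimit}. For $k\geq 3$ no such affine embedding is available: a totally ergodic $k$-step nilsystem is generically nonabelian, and one must work on the nilmanifold $G/\Gamma$ directly. The natural continuation is Leibman's polynomial equidistribution theorem applied to the sequence $(n^k\bm\beta,a^nx\Gamma)\in \mathbb T^r\times G/\Gamma$, which expresses the weighted limit as an integral over a nilpotent affine joining; one then applies a tower version of Lemma \ref{lem:BHconvolvesToUniform}, iteratively removing the vertical Fourier components of $f$ at each of the $k-1$ horizontal layers of the nilpotent filtration on $G/\Gamma$, so that the surviving integral matches the Kronecker-factor expression for $L_{k+1}(f,T)$ with total error $O(k_0^{-1/2})\|f\|^2$. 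Unlike the single-layer affine Fourier computation of \S\ref{sec:Annihilating}, each layer introduces new character interactions that must be controlled simultaneously, and organising this iteration so that the Bohr-Hamming cylinder function $g$ chosen at the top of the tower simultaneously annihilates the relevant characters at every layer is the principal technical obstacle --- precisely the reason the author formulates Conjecture \ref{conj:NextStep} rather than a theorem.
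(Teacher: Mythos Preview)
The statement you are addressing is explicitly labeled a \emph{conjecture} in the paper; there is no proof given for it. In the Remarks section (\S\ref{sec:Remarks}) the author says only that Conjecture~\ref{conj:NextStep} would follow from higher-order analogues of Lemma~\ref{lem:ReductionToWeyl}, Proposition~\ref{prop:WeakLimit}, and Lemma~\ref{lem:SmallFourierToW}, and that for $k\geq 3$ a reduction to affine systems is very unlikely, while for $k\geq 4$ one would almost certainly need to carry out explicit computations on essentially arbitrary $(k-1)$-step totally ergodic nilsystems---computations the author calls ``forbidding.''

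Your write-up is not a proof and does not pretend to be one: it is a strategic outline that ends by naming the same obstruction the paper identifies (the lack of an affine model for $k\geq 3$ and the need to work directly on nilmanifolds), and you explicitly say this is ``precisely the reason the author formulates Conjecture~\ref{conj:NextStep} rather than a theorem.'' In that sense your proposal is entirely consistent with the paper's own assessment. A couple of minor points of alignment: the paper singles out the analogues of Lemma~\ref{lem:ReductionToWeyl}, Proposition~\ref{prop:WeakLimit}, and Lemma~\ref{lem:SmallFourierToW} as the missing ingredients, whereas you phrase the gap in terms of Leibman equidistribution plus an iterated version of Lemma~\ref{lem:BHconvolvesToUniform}; these are compatible descriptions of the same difficulty. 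Also, the paper suggests the relevant characteristic factor should be $(k-1)$-step (not $k$-step as you write), since the unweighted $(k+1)$-term average $L_{k+1}$ is controlled by $\mathcal Z_{k-1}$ in the Host--Kra hierarchy, though the weight $g(n^k\bm\beta)$ may well push this up.

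In short: there is nothing to grade against, and your outline accurately reflects both the intended strategy and the reason the statement remains open.
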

Conjecture \ref{conj:NextStep} could be proved with appropriate higher-order analogues of Lemma \ref{lem:ReductionToWeyl}, Proposition \ref{prop:WeakLimit}, and Lemma \ref{lem:SmallFourierToW}.  For $k\geq 3$, it seems very unlikely that a reduction to $2$-step affine systems will be possible, and for $k\geq 4$, it is nearly certain that explicit computations must be carried out for essentially arbitrary $(k-1)$-step totally ergodic nilsystems.  These computations seem forbidding, so we hope a more qualitative approach can be developed.

\frenchspacing
\bibliographystyle{amsplain}
\bibliography{SquaresRecurrenceBib}

\end{document}